\documentclass[11pt]{amsart}

\usepackage[T1]{fontenc}
\usepackage[utf8]{inputenc}
\usepackage{lmodern}
\usepackage[expansion=false]{microtype}
\usepackage{amsmath,amssymb}
\usepackage{enumitem}
\usepackage[hidelinks]{hyperref}
\usepackage[nameinlink,capitalise,noabbrev]{cleveref}

\numberwithin{equation}{section}
\newtheorem{theorem}{Theorem}[section]
\newtheorem{proposition}[theorem]{Proposition}
\newtheorem{lemma}[theorem]{Lemma}
\newtheorem{corollary}[theorem]{Corollary}
\theoremstyle{definition}
\newtheorem{definition}[theorem]{Definition}
\newtheorem{convention}[theorem]{Convention}
\theoremstyle{remark}
\newtheorem{remark}[theorem]{Remark}

\DeclareMathOperator{\Pic}{Pic}
\DeclareMathOperator{\End}{End}
\DeclareMathOperator{\Hom}{Hom}
\DeclareMathOperator{\Kar}{Kar}
\DeclareMathOperator{\Add}{Add}
\DeclareMathOperator{\thick}{thick}
\DeclareMathOperator{\Spec}{Spec}
\DeclareMathOperator{\SH}{SH}
\DeclareMathOperator{\GW}{GW}
\DeclareMathOperator{\rk}{rk}
\newcommand{\MBP}{\mathrm{MBP}}

\newcommand{\F}{\mathbb F_2}
\newcommand{\X}{\mathbb X}
\newcommand{\one}{\mathbf 1}
\newcommand{\Cinf}{\mathcal C_{\infty}}
\newcommand{\Dinf}{\mathcal D_{\infty}}
\newcommand{\C}{\mathcal C}
\newcommand{\D}{\mathcal D}
\newcommand{\Hh}{\mathcal H}
\newcommand{\V}{\mathcal V}
\newcommand{\B}{\mathcal B}
\newcommand{\cell}{\mathrm{cell}}
\newcommand{\Mod}{\mathrm{Mod}}
\newcommand{\Sm}{\mathrm{Sm}}
\newcommand{\id}{\mathrm{id}}
\title[Picard groups and composition nilpotence]
{Picard groups and composition nilpotence for finite cellular isotropic
spectra}

\author{David Kumallagov}
\subjclass[2020]{Primary 14F42; Secondary 55P42, 18G80}
\keywords{isotropic motivic homotopy theory, finite cellular spectra,
motivic Brown--Peterson spectrum, Picard group, weight structure}
\date{}

\hypersetup{
  pdftitle={Picard groups and composition nilpotence for finite cellular
  isotropic spectra},
  pdfauthor={David Kumallagov},
  pdfkeywords={isotropic motivic homotopy theory, finite cellular spectra,
  motivic Brown--Peterson spectrum, Picard group, weight structure,
  composition nilpotence}
}

\begin{document}

\begin{abstract}
Let $k=k_0(t_1,t_2,\ldots)$ be a flexible field of characteristic different
from $2$, let $\X$ be the mod-$2$ isotropic sphere, and set
$E=\X\wedge\MBP$.  We prove
\[
  \Pic\bigl(\SH(k/k)^c_{\cell}\bigr)\cong\mathbb Z^2;
\]
thus every tensor-invertible finite cellular isotropic spectrum is a unique
bigraded suspension of $\X$.  More generally, $E_{**}$ is conservative on
finite cellular objects, and concentration on one diagonal forces a finite
direct sum of suspended isotropic spheres.  A bounded diagonal weight
structure recovers the exact weights and minimal-complex terms from
$E_{**}$.  For every nonzero finite cellular $M$, the kernel of
\[
  \End(M)\longrightarrow\End(E\wedge_{\X}M)
\]
is a composition-nilpotent ideal, with exponent at most
$d(M)(2L(M)-1)$, where $L(M)$ is the diagonal width and $d(M)$ the maximal
number of distinct Tate degrees on one diagonal.  This yields detection of
composition nilpotence and canonical Fitting decompositions.  
\end{abstract}

\maketitle

\section{Introduction and main results}\label{sec:introduction}

Isotropic localization was introduced by Vishik for motives \cite{VishikIsotropic} and developed by
Tanania in stable motivic homotopy theory
\cite{TananiaStable,TananiaCellular}.  It annihilates
varieties which doesn’t have closed point of degree prime to a given prime $p.$ Over a flexible field its
Brown--Peterson linear cellular category is exceptionally simple: for
\[
  E=\X\wedge\MBP,
\]
Tanania identifies cellular $E$-modules with bigraded $\F$-vector spaces.
What this calculation controls in the original $\X$-linear category is much
less formal.  Before $H\mathbb Z/2$-completion, neither conservativity of
extension of scalars nor the lifting of invertible objects and endomorphisms
follows from the split $E$-linear description.

We show that, on finite cellular objects, $E$-homology nevertheless retains
exactly the information needed to control invertibility and weights.  In
particular, every tensor-invertible finite cellular isotropic spectrum is a
unique bigraded suspension of the isotropic sphere, so the full Picard group
is $\mathbb Z^2$.  We also prove an exact weight-profile theorem and a
quantitative composition-nilpotence theorem for the objectwise kernel of
$E$-homology.  The latter yields polynomial lifting, canonical Fitting
decompositions, and the Krull--Schmidt property.

The decisive input is the calculation
\[
  \pi_{0,0}(\X)\cong\F.
\]
Tanania observed that quadratic transfers imply $8=0$ in this ring
\cite[Remark~3.6]{TananiaCellular}, and we sharpen the transfer argument:  the resulting relations force
$2=0$; connectivity makes $\GW(k)\to\pi_{0,0}(\X)$ surjective, and the
structural map $\X\to E$ identifies the quotient with $\F$.  This calculation
rules out exotic idempotent summands in the weight heart and makes its
$E$-linear reduction full and conservative.

Let us recall some chronology of calculations of Picard groups and related things in motivic categories. 
Hu studied Picard groups in stable $\mathbb A^1$-homotopy theory
\cite{HuPicard}, Bachmann proved the invertibility of reduced affine-quadric
motives \cite{BachmannAffine}, and Vishik determined the subgroup of the
Picard group of geometric motives generated by them
\cite{VishikAffinePicard}.  Du and Vishik introduced Morava-isotropic stable
categories and used them to construct points of the Balmer spectrum
\cite{DuVishikBalmer}; Vishik's comparison of isotropic and numerical
equivalence gives further tensor-triangular context \cite{VishikNumerical}.
Recent work of Sparks studies tensor-nilpotence in $\F$-linear isotropic Tate
and Artin--Tate categories of Voevodsky motives, under a standing
characteristic zero hypothesis \cite{SparksIsotropicTT}.  Our result instead
concerns ordinary composition-nilpotence of an objectwise $E$-homological
kernel in the stable motivic homotopy category; neither
nilpotence statement formally implies the other.

The theory of weight structures fits very well into this picture: Bondarko and Tabuada proved a general $w$-Picard theorem when there exists a
full, additive, conservative symmetric monoidal functor from the heart of a
bounded monoidal weight structure to a local semisimple symmetric monoidal
abelian category \cite[Theorem~1.1]{BondarkoTabuadaPicard}.  The reduction
$\Phi\colon\Hh\to\V$ constructed below satisfies these hypotheses.  Their
theorem therefore gives an alternative proof
\[
  \Pic(\C)\cong\Pic(\Hh)\times\mathbb Z\cong\mathbb Z^2.
\]
Our direct argument additionally gives purity, exact weight
profiles, and the quantitative composition-nilpotence results.  To the best
of our knowledge, both the calculation of $\pi_{0,0}(\X)$ and
the explicit objectwise nilpotence bound are new and very interesting.

Fix a flexible field
\[
  k=k_0(t_1,t_2,\ldots),
  \quad \operatorname{char}(k_0)\ne2.
\]
Let $\X$ denote the isotropic sphere over $k$.  Under Tanania's equivalence
$\SH(k/k)\simeq\X\text{-}\Mod$, let
$\X\text{-}\Mod_{\cell}$ be the localizing subcategory generated by the
bigraded free modules $\Sigma^{p,q}\X$, and set
\[
  \Cinf:=(\X\text{-}\Mod_{\cell})^\omega,
  \quad
  \C:=h\Cinf.
\]
Thus $\C$ is the thick idempotent-complete triangulated subcategory generated
by the bigraded isotropic spheres.  We write
\[
  \C=\SH(k/k)^c_{\cell}.
\]
The compact objects form a stable idempotent-complete subcategory.  Because
$\wedge_{\X}$ is exact in each variable and sends pairs of cells to cells,
the thick closure of the cells is closed under $\wedge_{\X}$.  Hence $\Cinf$
and $\C$ are symmetric monoidal subcategories.  For any symmetric monoidal category
$\mathcal T$, its Picard group $\Pic(\mathcal T)$ is the abelian group of
isomorphism classes of objects $P$ admitting an object $P^{-1}$ with
$P\otimes P^{-1}\cong\one$; the group operation is induced by tensor
product.  In particular, $\Pic(\C)$ is formed using $\wedge_{\X}$.

Let $\MBP$ be the motivic Brown--Peterson spectrum at the prime $2$, in the
conventions of \cite{TananiaCellular,VezzosiMBP}, and put
\[
  E:=\X\wedge\MBP.
\]
The structural map $\X\to E$ induces an exact symmetric monoidal extension of
scalars functor
\[
  F:=E\wedge_{\X}-\colon\C\longrightarrow\D,
\]
where
\[
  \Dinf:=(E\text{-}\Mod_{\cell})^\omega,
  \qquad
  \D:=h\Dinf.
\]
All Hom and End groups below are taken in these homotopy categories unless a
different category is displayed explicitly.

We use $[1]=\Sigma^{1,0}$ for the triangulated suspension and set
\[
  T:=\Sigma^{1,1}\X,
  \quad
  U:=\Sigma^{1,1}E.
\]
Here $T^q$ and $U^q$ denote tensor powers for $q\geq0$ and tensor powers of
the respective inverse objects for $q<0$; the integer $q$ is called the
\emph{Tate degree}.  
Consequently,
\begin{equation}\label{eq:suspension-convention}
  \Sigma^{p,q}\X=T^q[p-q],
  \qquad
  \Sigma^{p,q}E=U^q[p-q].
\end{equation}
For $M\in\C$, its isotropic $\MBP$-homology is
\[
  E_{**}(M):=\pi_{**}(E\wedge_{\X}M).
\]
We call $p-q$ the \emph{diagonal degree} of bidegree $(p,q)$.

Our argument combines the split $E$-linear calculation with a bounded
diagonal weight structure.  The calculation $\End_{\C}(\X)\cong\F$ makes
reduction to $E$ full and conservative on the weight heart; weight detection
then lifts purity and conservativity to all finite cellular objects.

\subsection*{Main results}

All results below concern finite cellular objects over a flexible field of
characteristic different from $2$.  All endomorphisms have bidegree $(0,0)$,
and nilpotence means nilpotence under ordinary composition. 

Our principal result is the following.

\begin{theorem}\label{thm:main-picard}
The canonical homomorphism
\[
  \mathbb Z^2\longrightarrow\Pic(\C),
  \qquad
  (p,q)\longmapsto[\Sigma^{p,q}\X],
\]
is an isomorphism.
\end{theorem}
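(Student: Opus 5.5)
The plan is to prove injectivity and surjectivity separately, transferring both questions to the split $E$-linear category $\D$ via the symmetric monoidal functor $F$. By Tanania's theorem, cellular $E$-modules are equivalent to bigraded $\F$-vector spaces, so in $\D$ the invertible objects are exactly the $\Sigma^{p,q}E$: an invertible object must have total $E_{**}$-rank one, since rank is multiplicative under $\wedge_E$ by Künneth over a field. This gives $\Pic(\D)\cong\mathbb Z^2$. The composite $\mathbb Z^2\to\Pic(\C)\to\Pic(\D)$ is this isomorphism, so the canonical map is injective. Equivalently, $E_{**}(\Sigma^{p,q}\X)=\F$ concentrated in bidegree $(p,q)$ distinguishes the suspensions.

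For surjectivity, take $P\in\C$ invertible. Then $F(P)$ is invertible in $\D$, so $F(P)\cong\Sigma^{p,q}E$ for a unique $(p,q)$, and $E_{**}(\Sigma^{-p,-q}P)\cong E_{**}(\X)$. After replacing $P$ by $\Sigma^{-p,-q}P$, it suffices to show: if $P$ is finite cellular with $E_{**}(P)\cong E_{**}(\X)$, then $P\cong\X$. I would derive this from two ingredients announced in the introduction.

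The first is purity. Put the bounded diagonal weight structure on $\C$, whose heart $\Hh$ is the additive idempotent closure of the $T^q$. Using weight detection, concentration of $E_{**}(P)$ on the single diagonal $p-q=0$ forces $P$ into the heart, as a finite sum $\bigoplus_q (T^q)^{n_q}$; the rank count then gives $P\cong T^0=\X$.

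The second is the fullness and conservativity of the reduction $\Phi\colon\Hh\to\V$ to $E$-modules. The key input here is $\End_{\C}(\X)=\pi_{0,0}(\X)\cong\F$. To obtain it, first show that $\GW(k)\to\pi_{0,0}(\X)$ is surjective, which is where connectivity is used. Then use quadratic transfers to kill $2$ and the class $\langle -1\rangle-1$, and finally check, using the map $\X\to E$, that the quotient is nonzero.

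The main obstacle I expect is the purity step: showing that a finite cellular $M$ whose $E_{**}$ lies on one diagonal is in the heart. My first attempt would be induction on the weight range of a minimal weight complex. Suppose the top weight-complex differential becomes zero after applying $F$. Because $\Phi$ is full and reflects isomorphisms on $\Hh$, with endomorphism rings of each $T^q$-isotypic part being matrix algebras over $\F$, a differential that vanishes after reduction can be split off by a change of basis. Minimality of the complex then forbids any nontrivial term outside one weight. Making this precise requires the weight complex functor and control of Hom groups between $T^q$ and $T^{q'}[n]$, which is where the calculation of $\pi_{*,*}(\X)$ along diagonals enters.
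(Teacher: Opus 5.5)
Your architecture matches the paper's. Injectivity goes via $\Pic(\D)\cong\mathbb Z^2$. Surjectivity reduces to $E_{**}(P)\cong E_{**}(\X)$ and then uses Bondarko's weight detection for $F$. That needs $\Phi\colon\Hh\to\V$ full and conservative, which in turn rests on $\End_{\C}(\X)\cong\F$.

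The genuine gap is in this last calculation, which is the decisive new input. You say quadratic transfers ``kill $2$ and the class $\langle-1\rangle-1$'', but you give no mechanism for $2=0$. A single quadratic transfer only yields $1+\langle a\rangle=0$ for one nonsquare $a$, and Tanania's argument gives only $8=0$. Moreover, even with $2=0$, killing $\langle-1\rangle-1$ does not collapse $\GW(k)$ to $\F$. Let $I\subset\GW(k)$ be the fundamental ideal, with $I/I^2\cong k^\times/k^{\times2}$. Modulo $I^2$, the class $\langle t_1\rangle-1$ is the square class of $t_1$. It survives modulo $2$ and $\langle-1\rangle-1$, since $\pm t_1$ are nonsquares. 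What you actually need is that every $\langle a\rangle$ maps to $1$. The paper obtains this as follows:
\begin{enumerate}[label=\textup{(\arabic*)}]
\item For a nonsquare $a$, $\Spec k(\sqrt a)$ is anisotropic, so $\X\wedge\Sigma^\infty_+\Spec k(\sqrt a)\simeq0$.
\item Since $\X\wedge-$ preserves categorical traces, Hoyois's formula $\chi(\Spec k(\sqrt a))=\langle2\rangle(1+\langle a\rangle)$ gives $\langle a\rangle=-1$ in $\pi_{0,0}(\X)$.
\item Apply this to the three nonsquares $t_1,t_2,t_1t_2$; this is where flexibility of $k$ enters. Comparing with $\langle t_1t_2\rangle=\langle t_1\rangle\langle t_2\rangle=1$ forces $2=0$.
\item Then $\langle a\rangle=1$ for all $a$, so $\GW(k)\to\pi_{0,0}(\X)$ factors through rank mod $2$.
\end{enumerate}

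You also misplace the remaining difficulty. Purity needs no induction: once $\Phi$ is full and conservative, Bondarko's theorem applied to the weight-exact $F$ makes $P$ pure because $F(P)$ is. Your cancellation sketch is also reversed. It is differential components with \emph{nonzero} reduction, which then equal $1_{T^q}$, that split off. The $\Phi$-minimal complex that remains is what carries the information.

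The real work is conservativity of $\Phi$ and the absence of exotic retracts in $\Hh$. Contrary to your expectation, this needs no further computation of $\pi_{**}(\X)$ along diagonals. For $n>0$, $\Hom(T^q,T^{q'}[n])=0$ by connectivity. For $n=0$ and $q\ne q'$, these are the unknown groups $\pi_{q-q',q-q'}(\X)$, which $\Phi$ kills. Knowing only that isotypic blocks are matrix algebras over $\F$ does not give conservativity. The paper instead shows by pigeonhole that the kernel of $\End(\bigoplus_iT^{q_i})\to\End(\Phi(\bigoplus_iT^{q_i}))$ has $s$-th power zero, where $s$ is the number of distinct Tate degrees. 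Every path of $s$ steps through the summands revisits some $T^q$, so it contains a subcomposite in $\ker(\End(T^q)\to\F)=0$. Nilpotence of this kernel then gives idempotent lifting, hence $\Hh=\Add\{T^q\}$, and conservativity via lifted inverses.
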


The classification of invertible objects follows from a more general finite
cellular detection statement.

\begin{theorem}[Pure finite cellular $\MBP$--Nakayama theorem]
\label{thm:pure-nakayama}
Let $M\in\C$.
\begin{enumerate}[label=\textup{(\roman*)}]
\item If $E_{**}(M)=0$, then $M\simeq0$.
\item Suppose that $E_{**}(M)$ is concentrated in one diagonal degree $n$.
For
\[
  m_q:=\dim_{\F}E_{n+q,q}(M),
\]
only finitely many $m_q$ are nonzero, and there is an isomorphism
\[
  M\simeq
  \bigoplus_{q\in\mathbb Z}
  \bigl(\Sigma^{n+q,q}\X\bigr)^{\oplus m_q}.
\]
\end{enumerate}
\end{theorem}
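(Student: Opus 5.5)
The plan is to build a bounded weight structure on $\C$ whose heart is additively generated by the spheres, and then reduce both statements to linear algebra over $\F$ on that heart. The heart $\Hh$ should be the additive, idempotent-complete closure of the objects $T^q=\Sigma^{q,q}\X$, $q\in\mathbb Z$. For this to be a heart of a bounded weight structure generated by these cells, the key inputs are
\[
\Hom_{\C}(T^a,T^b[i])=\pi_{a-b-i,\,a-b}(\X)=0 \quad\text{for } i>0,
\qquad \End_{\C}(T^q)\cong\pi_{0,0}(\X)\cong\F .
\]
The vanishing should come from Tanania's connectivity and vanishing results for $\pi_{**}(\X)$ in the relevant range. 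The endomorphism computation is the transfer argument sketched in the introduction, which I take as available. Bondarko's construction then gives a bounded weight structure on $\C$, since $\C$ is thick generated by negative-vanishing generators, with heart $\Kar(\Add\{T^q\})$.

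Next I would analyse the reduction $\Phi=F|_{\Hh}\colon\Hh\to\V$, where $\V$ is the analogous heart in $\D$. By Tanania's theorem $\V$ is equivalent to finite-dimensional $\mathbb Z$-graded $\F$-vector spaces, with $U^q$ concentrated in degree $q$. I would show that $\Phi$ is full and conservative on $\Hh$. The maps $\Hom(T^a,T^b)=\pi_{a-b,a-b}(\X)\to\pi_{a-b,a-b}(E)$ need to be isomorphisms for all $a,b$. For $a=b$ both sides are $\F$ and the unit map is nonzero. For $a\ne b$ I would argue that both sides vanish, using the $E$-side computation and the matching vanishing of $\pi_{n,n}(\X)$ for $n\neq0$; this is the step I expect to be the main obstacle, and I would try to derive it from Tanania's description of $\pi_{**}(\X)$ via the isotropic motivic Steenrod algebra. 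Granting this, $\Add\{T^q\}$ is equivalent to graded $\F$-vector spaces, which is already idempotent complete, so $\Hh\simeq\V$. In particular every heart object is a finite sum of $T^q$'s, and a heart morphism is an isomorphism if and only if its image under $F$ is.

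Weight detection then gives (i). Take $M\ne0$ with a bounded weight Postnikov tower and minimal weight complex $M^{a}\to\cdots\to M^{b}$ with $M^a,M^b\neq0$. Since $\Phi$ is full, conservative and reflects splittings, a minimal complex stays minimal after applying $\Phi$, meaning its differentials contain no split isomorphism summands. Over the semisimple category $\V$, a minimal complex has zero differentials. Hence $F(M)$ has weight complex with zero differentials and nonzero terms, and $E_{**}(M)$ is nonzero in the corresponding diagonal degrees. In more detail, $F$ is weight-exact, $\D$ has a split weight structure, and $E_{**}$ of a pure object $U^q[j]$ is read off diagonal by diagonal. So $E_{**}(M)=0$ forces every term to vanish, and thus $M\simeq0$.

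For (ii), the same analysis shows that the nonzero terms of the minimal weight complex of $M$ are exactly detected by the diagonals on which $E_{**}(M)$ lives, with $\Phi(M^{j})$ recording $E_{j+q,q}(M)$ in Tate degree $q$ (up to the sign convention for weights). Concentration in one diagonal $n$ forces the minimal complex to have a single term, so $M$ is pure of weight $-n$ (suitably normalized). Then $M\cong H[n]$ for some $H\in\Hh$, and $H\cong\bigoplus_q (T^q)^{\oplus m_q}$ with $m_q=\dim_{\F}E_{n+q,q}(M)$. Finiteness of the $m_q$ follows because $H$ is compact. By \eqref{eq:suspension-convention} this is the stated decomposition.
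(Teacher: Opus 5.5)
Your overall architecture (negative generating family $\{T^q\}$, bounded weight structure, analysis of the heart, then weight detection) is the paper's. The gap is in the heart step. You want $\Phi\colon\Hh\to\V$ to be an equivalence, and for that you need $\Hom_{\C}(T^a,T^b)=\pi_{a-b,a-b}(\X)$ to vanish for $a\neq b$. For $a>b$ this fails:
\begin{itemize}
\item Since $\X$ is connective, $\pi_{n,n}(\X)$ is a quotient of $\pi_{n,n}(\one)=K^{MW}_{-n}(k)$. For $n>0$ this group is generated by $\eta^n$.
\item In Tanania's description of (completed) isotropic homotopy as $\operatorname{Ext}$ over the topological Steenrod algebra with doubled grading, the diagonal $p=q$ is the $h_0$-tower, and $\eta$ is detected by $h_0$. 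So $\eta^n\neq0$ for every $n\ge1$.
\item Since $\pi_{1,1}(E)=0$, the map $\eta\colon T^{1}\to T^{0}$ is a nonzero morphism of $\Hh$ with $\Phi(\eta)=0$.
\end{itemize}
Hence $\Phi$ is full but not faithful, and $\Hh\not\simeq\V$. Roughly, $\Phi$ behaves like reduction modulo $\eta$. The Steenrod-algebra computation you intended to use would exhibit this obstruction rather than the vanishing. Two of your deductions rest on the false equivalence and are unsupported as written: that every heart object is a finite sum of $T^q$'s (via idempotent completeness of graded vector spaces), and that $\Phi$ detects isomorphisms.

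The missing idea is that faithfulness is unnecessary. It suffices that $J_{Q_0}=\ker(\End(Q_0)\to\End(\Phi Q_0))$ is nilpotent for $Q_0=\bigoplus_iT^{q_i}$. Let $s$ be the number of distinct Tate degrees in $Q_0$. Expand a product of $s$ elements of $J_{Q_0}$ into sums over paths $i_0\to\cdots\to i_s$. Some Tate degree repeats along each path, so the closed subpath is an element of $\End(T^q)\cong\F$ killed by $\Phi$. It is therefore zero, since $\End(T^q)\to\End(U^q)$ is injective. Hence $J_{Q_0}^s=0$. With this nilpotent ideal you get the three facts you need:
\begin{itemize}
\item No exotic retracts: lift the degreewise change of basis that diagonalizes $\Phi(e)$, reducing to $e-e_0\in J_{Q_0}$ with $e_0$ a standard idempotent. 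Idempotents congruent modulo a nilpotent ideal are conjugate.
\item Fullness: lift the equal-degree blocks entrywise.
\item Conservativity: if $g$ lifts $\Phi(f)^{-1}$, then $gf-1$ and $fg-1$ are nilpotent, so $f$ is invertible.
\end{itemize}

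With the heart handled this way, the rest of your plan goes through, since your minimal-complex step uses only fullness, conservativity and $\End(T^q)=\F$. It does, however, tacitly rely on two further inputs:
\begin{itemize}
\item compatibility of weight complexes with $F$;
\item Bondarko's result that, for a bounded weight structure, the weight complex detects weight bounds. This is what makes a nonzero $M$ have a nonzero minimal complex, and makes a complex concentrated in one degree force purity.
\end{itemize}
The paper avoids re-deriving these by applying Bondarko's weight-detection theorem directly to $F$.

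Two smaller points. Negativity of $\{T^q\}$ should come from Morel's connectivity applied to the \v{C}ech cofiber sequence, not from Tanania's computations, which concern the completed sphere. With the conventions here, $M$ is pure of weight $n$, not $-n$.
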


\begin{corollary}[One-cell $\MBP$--Nakayama theorem]
\label{cor:one-cell-nakayama}
If $M\in\C$ and
\[
  E_{**}(M)\cong\Sigma^{p,q}\F,
\]
then
\[
  M\simeq\Sigma^{p,q}\X.
\]
\end{corollary}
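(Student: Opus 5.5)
This is an immediate specialization of \cref{thm:pure-nakayama}(ii). I will apply that theorem to $M$ and then read off the multiplicities from $E_{**}(M)$.

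First I fix the diagonal. By hypothesis $E_{**}(M)\cong\Sigma^{p,q}\F$, so $E_{**}(M)$ is one-dimensional over $\F$ and sits in bidegree $(p,q)$. In particular it is concentrated in the single diagonal degree $n:=p-q$, and the hypothesis of \cref{thm:pure-nakayama}(ii) holds.

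Next I compute the multiplicities
\[
  m_{q'}=\dim_{\F}E_{n+q',q'}(M).
\]
The only nonzero group is $E_{p,q}(M)$, and the bidegree $(n+q',q')$ equals $(p,q)$ exactly when $q'=q$. Hence $m_q=1$ and $m_{q'}=0$ for $q'\neq q$.

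Finally, \cref{thm:pure-nakayama}(ii) gives
\[
  M\simeq\Sigma^{n+q,q}\X=\Sigma^{p,q}\X.
\]

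There is no genuine obstacle here; all the content lies in \cref{thm:pure-nakayama}. The one point to check is a convention: $\Sigma^{p,q}\F$ must mean the one-dimensional bigraded vector space placed in bidegree $(p,q)$. This is consistent with $E_{**}(\Sigma^{p,q}\X)=\pi_{**}(\Sigma^{p,q}E)$, using Tanania's identification of $\pi_{**}E$ with $\F$ in bidegree $(0,0)$ on the cellular part, which is the same $\F$ as $\pi_{0,0}(\X)\cong\F$.
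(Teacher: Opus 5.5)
Your proposal is correct and follows exactly the paper's own argument: apply \cref{thm:pure-nakayama}(ii) with diagonal degree $n=p-q$ and the single nonzero multiplicity $m_q=1$. Your check that $(n+q',q')=(p,q)$ forces $q'=q$ is the only bookkeeping needed, and your closing remark about conventions is harmless but not required.
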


\begin{corollary}
\label{cor:equivalence-detection}
A morphism $f\colon M\to N$ in $\C$ is an isomorphism if and only if
$E_{**}(f)$ is an isomorphism.
\end{corollary}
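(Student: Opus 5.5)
We deduce the corollary from \cref{thm:pure-nakayama}(i) by passing to the cofiber. One direction is formal: $E\wedge_{\X}-$ is a functor, so if $f$ is an isomorphism in $\C$, then $E\wedge_{\X}f$ is an isomorphism in $\D$. Applying $\pi_{**}$ shows that $E_{**}(f)$ is an isomorphism.

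For the converse, suppose $E_{**}(f)$ is an isomorphism and complete $f$ to a distinguished triangle
\[
  M\xrightarrow{f}N\longrightarrow C\longrightarrow M[1]
\]
in $\C$. The cofiber $C$ lies in $\C$, since $\C$ is a thick triangulated subcategory of the compact cellular $\X$-modules. The functor $F=E\wedge_{\X}-$ is exact, so it carries this triangle to a distinguished triangle in $\D$:
\[
  F(M)\xrightarrow{F(f)}F(N)\longrightarrow F(C)\longrightarrow F(M)[1].
\]
Because $\pi_{p,q}=\Hom_{\D}(\Sigma^{p,q}E,-)$ is homological, we obtain a long exact sequence
\[
  \cdots\to E_{p,q}(M)\xrightarrow{f_*}E_{p,q}(N)\to E_{p,q}(C)\to E_{p-1,q}(M)\xrightarrow{f_*}E_{p-1,q}(N)\to\cdots.
\]
Here $f_*$ is an isomorphism in every bidegree. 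Surjectivity of $f_*$ in bidegree $(p,q)$ makes the map $E_{p,q}(N)\to E_{p,q}(C)$ zero. Injectivity of $f_*$ in bidegree $(p-1,q)$ makes the map $E_{p,q}(C)\to E_{p-1,q}(M)$ zero. Exactness then forces $E_{p,q}(C)=0$ for all $(p,q)$, so $E_{**}(C)=0$.

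By \cref{thm:pure-nakayama}(i), it follows that $C\simeq0$. In a triangulated category, a morphism whose cofiber is zero is an isomorphism, so $f$ is an isomorphism in $\C$.

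The only substantive input is \cref{thm:pure-nakayama}(i), which is assumed here. The remaining points to check are that $\C$ is closed under cofibers, so that \cref{thm:pure-nakayama} applies to $C$, and that $E_{**}$ turns distinguished triangles in $\C$ into long exact sequences. Both hold because $F$ is an exact functor between triangulated categories.
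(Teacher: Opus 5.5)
Your proof is correct and is the paper's argument. The forward direction is formal. For the converse, the cofiber of $f$ has vanishing $E$-homology, so it is zero by \cref{thm:pure-nakayama}(i). The only difference is that you write out the long exact sequence and the closure of $\C$ under cofibers, which the paper leaves implicit.
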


The same structure controls weights and maps, not only objects.  For a
nonzero $M\in\C$, put
\[
  \beta_{n,q}(M):=\dim_{\F}E_{n+q,q}(M),
\]
and write
\[
  \dim_{\F}E_{**}(M):=\sum_{p,q}\dim_{\F}E_{p,q}(M)
\]
for the total dimension over all bidegrees.  This sum is finite for
$M\in\C$.  Define
\begin{equation}\label{eq:homological-width-data}
\begin{aligned}
  a(M)&:=\min\{n\mid \beta_{n,q}(M)\ne0\text{ for some }q\},\\
  b(M)&:=\max\{n\mid \beta_{n,q}(M)\ne0\text{ for some }q\}.
\end{aligned}
\end{equation}
\[
  L(M):=b(M)-a(M)+1,
  \quad
  d(M):=\max_n\#\{q\mid\beta_{n,q}(M)\ne0\}.
\]
These are finite positive integers except that $a(M)$ and $b(M)$ are not
defined for $M=0$.

%For a two-sided ideal $I$ in a ring, $I^r$ denotes the additive subgroup
%generated by all products of $r$ elements of $I$; in particular, $I^r=0$
%means that every such product vanishes.

Let $w$ denote the bounded ''diagonal'' weight structure on $\C$, constructed in
\cref{sec:weights} and characterized by placing every $T^q$ in weight zero.

\begin{theorem}\label{thm:weight-profile}
Let $M\in\C$ be nonzero.  For integers $a\le b$, one has
\[
  M\in\C_{w\in[a,b]}
  \quad\Longleftrightarrow\quad
  E_{p,q}(M)=0\text{ whenever }p-q\notin[a,b].
\]
Thus $[a(M),b(M)]$ is the smallest weight interval containing $M$.
Moreover, if $C^\bullet$ is any $\Phi$-minimal bounded complex representing
the strong weight complex of $M$, then
\[
  C^{-n}\cong
  \bigoplus_{q\in\mathbb Z}(T^q)^{\oplus\beta_{n,q}(M)}
  \qquad(n\in\mathbb Z).
\]
In particular, its termwise multiplicities and its cohomological support are
intrinsic and are read directly from $E_{**}(M)$.
\end{theorem}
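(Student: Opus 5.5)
The plan is to combine three inputs: the diagonal weight structure $w$ with heart $\Hh$ consisting of finite sums of $T^q$; the reduction functor $\Phi\colon\Hh\to\V$, which is full and conservative because $\End_{\C}(\X)\cong\F$; and the split $E$-linear description of $\D$. The key observation is that $F$ sends the heart into objects of $\D$ whose homotopy is concentrated on diagonal $0$. Indeed $E_{**}(T^q)=\Sigma^{q,q}\F$ sits on diagonal $p-q=0$. Since $\D$ is equivalent to bigraded $\F$-vector spaces, every object of $\D$ splits as a sum of shifts $U^q[n]$. So $F$ sends $w$ to the evident diagonal weight structure on $\D$, and that weight structure is detected exactly by diagonal degrees of homotopy.

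First I would prove $(\Rightarrow)$. Every object of $\C_{w\in[a,b]}$ is an iterated extension of objects $H[-n]$, or $H[n]$ depending on the sign convention, with $H\in\Hh$ and $n\in[a,b]$. Applying the exact functor $F$ and using long exact sequences in $\pi_{**}$ shows that $E_{p,q}$ vanishes off diagonals in $[a,b]$.

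For $(\Leftarrow)$ and the weight-complex statement I would choose a weight complex $C^\bullet$ of $M$ and make it $\Phi$-minimal. Minimality means the differentials become zero after applying $\Phi$: since $\V$ is semisimple and $\Phi$ is full and conservative on $\Hh$, any component of a differential that is an isomorphism after $\Phi$ lifts to an isomorphism and can be split off by Gaussian elimination. Next I would apply $F$ to the weight Postnikov tower of $M$. Because $\Phi$ is essentially $\pi_{**}\circ F$ restricted to the heart, the induced differentials between the $E$-homologies of the terms vanish. Hence the spectral sequence of the filtration degenerates at $E_1$ for degree reasons in $\D$: every object there is a sum of diagonal shifts, and maps between weight-pure pieces are controlled by $\Phi$ of the differentials. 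This gives $E_{**}(M)\cong\bigoplus_n E_{**}(C^{-n})[n]$ with diagonal placement. Reading off diagonal $n$ and Tate degree $q$ identifies the multiplicity of $T^q$ in $C^{-n}$ with $\beta_{n,q}(M)$.

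The $(\Leftarrow)$ direction then follows. If $E_{**}(M)$ vanishes off $[a,b]$, the minimal complex has no terms outside $[a,b]$, and a bounded complex supported in $[a,b]$ places $M$ in $\C_{w\in[a,b]}$ by weight detection for bounded weight structures. That detection is Bondarko's theorem: the weights of an object are governed by the homotopy equivalence class of its weight complex, and a minimal representative has the minimal support. Minimality of $[a(M),b(M)]$ is immediate, and intrinsicness follows because the multiplicities are expressed through $E_{**}(M)$.

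The main obstacle is the degeneration step: showing that $F$ of the weight Postnikov tower yields no hidden extensions or higher differentials. I would handle it by the splitting in $\D$. The relevant connecting maps go from a pure weight-$n$ piece to a weight-$(n{+}1)$ piece shifted. In $\D$, maps $U^q[n]\to U^{q'}[n+1+j]$ correspond to bigraded maps of degree shifting diagonal. These vanish in the $\F$-vector-space model except for the weight-$\pm1$ ones, which are exactly $\Phi$ of the differentials and so are zero by minimality.
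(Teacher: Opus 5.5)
Your $(\Rightarrow)$ direction is fine. Deducing $(\Leftarrow)$ from the multiplicity formula is also fine, via weight detection for the strong weight-complex functor, which is weight-exact and the identity on hearts.

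The gap is in the degeneration step. Gaussian elimination acts on complexes in $K^b(\Hh)$. It produces a $\Phi$-minimal $C^\bullet$ that is only \emph{homotopy equivalent} to the complex of connecting maps of the weight Postnikov tower you actually have. You never construct a tower of $M$ whose factors and connecting maps realize $C^\bullet$ itself.

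For the tower you apply $F$ to, $d_1$ is $\Phi$ of \emph{that tower's} differentials, and these need not vanish. For example, take $M=T^0$. The weight decomposition $T^0\oplus T^0\xrightarrow{(1,0)}T^0\to T^0[1]$ gives a tower whose complex is $T^0\to T^0\oplus T^0$, the map being (up to sign) the inclusion of the second summand. Here $\Phi(d)\neq0$.

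So the claims ``the induced differentials vanish'', ``degenerates at $E_1$'' and ``the connecting maps are zero by minimality'' are unjustified as written. To make them work you would need the non-automatic fact that every $\Phi$-minimal representative of $t(M)$ is the complex of some weight Postnikov tower of $M$. You would need it for \emph{every} such representative, since the statement concerns all of them.

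The step can be repaired without minimal towers.
\begin{enumerate}[label=\textup{(\arabic*)}]
\item Take an arbitrary tower. The spectral sequence of $\pi_{**}F$ has $E_1$-term $\pi_{**}F(H_i[i])$ in filtration $i$, concentrated on diagonal $i$. Its $d_1$ is induced by $\Phi$ of the tower's differentials. Hence $E_2$ in filtration $n$ is $H^{-n}$ of $K^b(\Phi)$ applied to the tower complex.
\item A differential $d_r$ lowers filtration by $r$ and sends total bidegree $(p,q)$ to $(p-1,q)$, i.e.\ to diagonal $p-q-1$. Filtration $p-q-r$ lives only on diagonal $p-q-r$, so $d_r=0$ for $r\ge2$.
\item Each $E_{p,q}(M)$ receives contributions only from filtration $p-q$, so there are no extension problems.
\item The tower complex is weakly homotopy equivalent to any representative $C^\bullet$ of $t(M)$, by Sosnilo's comparison. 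Weakly homotopic chain maps induce equal maps on cohomology after $K^b(\Phi)$, since $c-c'=dh+jd$ sends cocycles to coboundaries.
\item Hence for every $\Phi$-minimal $C^\bullet$, $\beta_{n,q}(M)$ is the Tate-degree-$q$ dimension of $H^{-n}(K^b(\Phi)C^\bullet)=\Phi(C^{-n})$. Multiplicity reflection then finishes.
\end{enumerate}

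For comparison, the paper avoids towers here entirely. It gets the equivalence by applying Bondarko's detection theorem directly to $F$, whose heart functor is full and conservative. It gets the multiplicities from the naturality isomorphism $K^b(\Phi)t(M)\cong t_E(FM)$: $t_E(FM)$ has zero differential, and isomorphic zero-differential complexes in $K^b(\V)$ are termwise isomorphic.
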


\begin{theorem}
\label{thm:nilpotence}
For $M\in\C$, let
\[
  I_M:=\ker\!\left(
  \End_{\C}(M)\longrightarrow
  \End_{\D}(E\wedge_{\X}M)
  \right).
\]
Then $I_M$ is a nilpotent two-sided ideal.  In particular, every 
endomorphism $f\colon M\to M$ with $E_{**}(f)=0$ is nilpotent.

More explicitly, if $M\ne0$, then the invariants in
\eqref{eq:homological-width-data} give the  bound
\[
  I_M^{\,d(M)(2L(M)-1)}=0.
\]
The exponent depends on the number of \emph{distinct} Tate degrees in each
diagonal degree, not on their multiplicities.  
\end{theorem}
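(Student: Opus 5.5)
We would work in the weight heart and pass to the strong weight complex. By \cref{thm:weight-profile}, $M$ has a $\Phi$-minimal weight complex $C^\bullet$ with $C^{-n}\cong\bigoplus_q (T^q)^{\oplus\beta_{n,q}(M)}$, supported in $n\in[a(M),b(M)]$, so there are $L=L(M)$ nonzero terms. The plan has two layers. The first handles the diagonal, or weight-heart, contribution: on each term $C^{-n}$, the reduction $\Phi$ is full and conservative because $\End_{\C}(\X)\cong\F$. The kernel of $\Phi$ on $\End(C^{-n})$ is then nilpotent, with exponent bounded by the number of distinct Tate degrees $d_n\le d(M)$. Indeed, order the summands by Tate degree. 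Maps $T^q\to T^{q}$ are $\F$ and are detected by $\Phi$. A map in the kernel therefore has zero diagonal blocks in the Tate grading. Its blocks $T^q\to T^{q'}$ with $q\neq q'$ live in $\Hom(T^q,T^{q'})$, which vanishes on one side of the diagonal by connectivity of $\X$ (Hom from a sphere into a higher-weight sphere in the heart is zero). The kernel is thus strictly triangular with respect to the $d_n$ distinct Tate degrees, and it is nilpotent of exponent at most $d_n$.

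The second layer handles the off-diagonal, or weight, contribution. Given $f\in I_M$, choose a weight-complex lift $f^\bullet\colon C^\bullet\to C^\bullet$. The weight complex functor $t$ is a conservative weak functor, and a morphism killed by it is "weight-degenerate". For a bounded weight structure of length $L$, a standard weight-filtration argument (as in Bondarko) shows that a composite of $L$ such degenerate morphisms vanishes. We will establish this by factoring through the weight Postnikov tower of length $L$: each morphism that induces zero on the weight complex up to homotopy shifts filtration by one. We then need to see that $f\in I_M$ forces each component $f^{-n}$ into the kernel of $\Phi$, up to a homotopy. This uses $\Phi$-minimality, in the sense that the differentials of $\Phi(C^\bullet)$ vanish, so $E_{**}(M)=H(\Phi C^\bullet)=\Phi C^\bullet$, and $E_{**}(f)=\Phi(f^\bullet)$ componentwise. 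Hence $f\in I_M$ gives $\Phi(f^{-n})=0$ for all $n$. A product of $d(M)$ elements of $I_M$ then has each component literally zero at chain level, up to the homotopy ambiguity from the choice of lifts. Such a map is null in the weight complex category. The ambiguity consists of homotopies $h^{-n}\colon C^{-n}\to C^{-n-1}$, which we track separately.

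To assemble the exponent, we expect the count $2L-1$ to arise from the homotopy ambiguity. A morphism that is zero on chain components is not zero in $\C$, only degenerate. In addition, composing lifts of elements of $I_M$ produces cross terms $d\circ h$ and $h\circ d$, which spread across adjacent weights. We would index the resulting terms by the filtration $F^j$ on $\End(M)$ given by the morphisms that map $w_{\ge i}$-pieces into $w_{\ge i+j}$-pieces modulo lower terms. Here $j$ ranges over $-(L-1),\dots,L-1$, which gives $2L-1$ steps. Within each step, the Tate-triangularity from the first layer contributes a factor $d(M)$. Multiplying the two bounds gives $I_M^{d(M)(2L-1)}=0$.

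The main obstacle is making the second layer rigorous: showing that products of $d(M)$ elements of $I_M$ strictly increase an honest filtration whose length is $2L-1$. The difficulty is that the weight complex is only a weak functor, and the lifts are not multiplicative on the nose. We would first try to define the filtration directly via weight truncations, $J_j=\{g: g \text{ kills } w_{\le i}\text{ into } w_{\le i-j}\text{-quotients for all } i\}$. We would then verify $I_M^{d(M)}\subseteq J_1$ and $J_iJ_j\subseteq J_{i+j}$, and show $J_{2L-1}=0$ using the boundedness of $M$ in $[a,b]$ together with weight orthogonality.
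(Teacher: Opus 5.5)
\textbf{First layer: right conclusion, wrong justification.} Your reduction
$I_M^{d(M)}\subseteq K_M:=\ker\bigl(\End_{\C}(M)\to\End_{K^b(\Hh)}(t(M))\bigr)$ through a $\Phi$-minimal complex is the paper's route. However, your justification of the termwise bound fails.

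\begin{itemize}
\item $\Hom_{\C}(T^q,T^{q'})\cong\pi_{q-q',q-q'}(\X)$ lies on the line $p-q=0$, where connectivity gives no vanishing.
\item Every $T^q$ has weight $0$, so there is no ``higher-weight sphere in the heart''.
\end{itemize}

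You do not need triangularity. In a product of $s$ kernel elements, each matrix entry is a sum of composites along paths through $s+1$ summands. Two of these summands carry the same Tate degree. The resulting loop lies in $\End(T^q)\cong\F$ and is killed by $\Phi$, so it is zero (\cref{lem:matrix-kernel}).

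\textbf{Second layer: the genuine gap is the bound $K_M^{2L-1}=0$.}

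\emph{What is missing.} Your step ``each morphism killed by the weight complex shifts the Postnikov filtration by one'' is exactly what is not available. If it were, you would get the stronger exponent $d(M)L$; the paper does not claim this.

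\begin{itemize}
\item The filtration argument (\cref{lem:strict-ghost}) needs a \emph{strict} tower ghost: an endomorphism of a fixed weight Postnikov tower that is zero on every weight factor.
\item The strong weight complex $t$ is compared with the tower only after projecting to the weak homotopy category.
\item So $h\in K_M$ only extends to a tower endomorphism whose chain map on weight factors is \emph{weakly} null-homotopic, $c=d\sigma+\tau d$, with $\sigma\neq\tau$ allowed. Such a map need not be realizable with zero components.
\end{itemize}

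\emph{Why your filtration does not repair this.}

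\begin{itemize}
\item Weight truncations are not functorial, so $J_j$ only makes sense for a fixed tower with compatible lifts.
\item Proving $I_M^{d(M)}\subseteq J_1$ is the same strictification problem.
\item If $I_M^{d(M)}\subseteq J_1$ and $J_iJ_j\subseteq J_{i+j}$ held, the negative indices would never enter. Your count $2L-1$ therefore has no mechanism behind it.
\end{itemize}

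\emph{The missing idea is to pair factors.}

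\begin{itemize}
\item For $h,h'\in K_M$, the composite of two weakly null chain maps is ordinarily null-homotopic.
\item An ordinarily null chain map can be realized on the same tower by a strict ghost with the same underlying map $h'h$. These are the inputs of \cref{lem:strictification}.
\item Hence $g_j=h_{2j}h_{2j-1}$, for $j=1,\dots,L-1$, are strict ghosts. Their composite factors through $M_{\le a}\cong H_a[a]$.
\item The last factor $h_{2L-1}$ can be represented with zero component on that bottom factor, via the endpoint adjustment, so it kills $M_{\le a}$.
\end{itemize}

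This gives $K_M^{2L-1}=0$ (\cref{lem:strong-kernel}). Together with $I_M^{d(M)}\subseteq K_M$, it yields the stated exponent $d(M)(2L-1)$.
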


\begin{corollary}
\label{cor:nilpotence-detection}
Let $M\in\C$ be nonzero and set
\[
  N(M):=d(M)(2L(M)-1).
\]
\begin{enumerate}[label=\textup{(\roman*)}]
\item An endomorphism $f$ of $M$ is nilpotent if and only if
$E_{**}(f)$ is nilpotent.  If $E_{**}(f)^m=0$, then $f^{mN(M)}=0$.
\item More generally, if $\mathcal J\subseteq\End_{\C}(M)$ is a two-sided
ideal and every product of $m$ elements of its image in
$\End_{\D}(FM)$ is zero, then $\mathcal J^{mN(M)}=0$.
\item If $P(x)\in\F[x]$ and $P(E_{**}(f))=0$, then
\[
  P(f)^{N(M)}=0.
\]
In particular, if $\mu_{\bar f}$ is the minimal polynomial of
$\bar f=E_{**}(f)$, then $\mu_{\bar f}(f)^{N(M)}=0$.  Thus $f$ is algebraic
over $\F$ and has an annihilating polynomial of degree at most
\(N(M)\dim_{\F}E_{**}(M).
\)
\end{enumerate}
\end{corollary}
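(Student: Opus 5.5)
The corollary is a formal consequence of \cref{thm:nilpotence}, applied with $N:=N(M)$. The plan is to prove (ii) first, then derive (i) and (iii) from it by choosing suitable ideals.

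For (ii), let $\mathcal J\subseteq\End_{\C}(M)$ be a two-sided ideal and write $\Psi\colon\End_{\C}(M)\to\End_{\D}(FM)$ for the ring homomorphism induced by $F$. Let $\mathcal J^m$ denote the two-sided ideal generated by products of $m$ elements of $\mathcal J$. The hypothesis says that $\Psi$ kills every product $j_1\cdots j_m$ with $j_i\in\mathcal J$. Since $\Psi$ is additive and multiplicative, it then kills every finite sum of such products and all their two-sided multiples, so $\mathcal J^m\subseteq I_M$. By \cref{thm:nilpotence}, $I_M^{N}=0$. Now take any product of $mN$ elements of $\mathcal J$ and group it into $N$ consecutive blocks of length $m$. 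Each block lies in $\mathcal J^m\subseteq I_M$, so the whole product lies in $I_M^N=0$. Hence $\mathcal J^{mN}=0$.

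For (i), one direction is trivial: if $f^r=0$, then $E_{**}(f)^r=E_{**}(f^r)=0$. For the converse, I would apply the block argument to the single element $f$ rather than to the ideal it generates, since the ideal $(f)$ need not be nilpotent. Suppose $E_{**}(f)^m=0$. Then $\Psi(f^m)=0$. Here I am using that $E_{**}$ of an endomorphism is the map on $\pi_{**}$ of $F(f)$, and that for cellular $E$-modules $\pi_{**}$ is faithful, by Tanania's identification of $E$-$\Mod_{\cell}$ with bigraded $\F$-vector spaces. Thus $f^m\in I_M$, and $f^{mN}=(f^m)^N\in I_M^N=0$.

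For (iii), the same faithfulness gives $\Psi(P(f))=P(\Psi f)=0$, because $\Psi$ is an $\F$-algebra map; note that $\End_{\C}(M)$ is an $\F$-algebra since $\End(\X)\cong\F$. So $P(f)\in I_M$ and $P(f)^N=0$. Taking $P=\mu_{\bar f}$, whose degree is at most $\dim_{\F}E_{**}(M)$, the polynomial $\mu_{\bar f}^N$ annihilates $f$ and has degree at most $N\dim_{\F}E_{**}(M)$.

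The main obstacle is the faithfulness step: identifying $I_M$, defined through $\End_{\D}$, with the kernel of $f\mapsto E_{**}(f)$. This should follow from the statement that a map of cellular $E$-modules inducing zero on $\pi_{**}$ is zero, which is part of Tanania's splitting and presumably recorded earlier. The paper's own phrasing in \cref{thm:nilpotence} already uses this identification.
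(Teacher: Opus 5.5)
Your proposal is correct and follows the paper's proof essentially verbatim. Both arguments first place $f^m$, $\mathcal J^m$ and $P(f)$ in $I_M$, using faithfulness of $\pi_{**}$ on cellular $E$-modules (\cref{thm:tanania-input}(iii)) and the $\F$-linearity of \cref{cor:F2-linear}. Both then regroup a product of $mN(M)$ factors into $N(M)$ blocks and invoke $I_M^{N(M)}=0$ from \cref{thm:nilpotence}; the only cosmetic difference is that you prove (ii) before (i).
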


\begin{theorem}[Motivic Fitting decomposition]
\label{thm:fitting}
For every $M\in\C$ and $f\in\End_{\C}(M)$ there is a unique idempotent
$e_f\in\End_{\C}(M)$ commuting with $f$ such that the induced decomposition
\[
  M\cong M_{\mathrm{nil}}\oplus M_{\mathrm{inv}},
  \qquad
  M_{\mathrm{nil}}=\operatorname{im}(e_f),
  \quad
  M_{\mathrm{inv}}=\operatorname{im}(1-e_f),
\]
has $f|_{M_{\mathrm{nil}}}$ nilpotent and
$f|_{M_{\mathrm{inv}}}$ is invertible.  After applying $E_{**}$ this is the
''ordinary'' Fitting decomposition of the finite-dimensional bigraded
$\F$-vector space $E_{**}(M)$.  The projectors are functorial for
intertwining morphisms: if $uf=f'u$, then $ue_f=e_{f'}u$.
\end{theorem}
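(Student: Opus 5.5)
The plan is to run the classical Fitting argument for an endomorphism of an object whose endomorphism ring is algebraic modulo a nilpotent ideal, using Corollary~\ref{cor:nilpotence-detection}(iii). Set $V:=E_{**}(M)$, which is a finite-dimensional bigraded $\F$-vector space, and $\bar f:=E_{**}(f)$. Write the minimal polynomial as $\mu_{\bar f}(x)=x^{r}g(x)$ with $g(0)\neq0$, so that $x^r$ and $g$ are coprime in $\F[x]$. By Corollary~\ref{cor:nilpotence-detection}(iii), $P(f):=\mu_{\bar f}(f)^{N(M)}=f^{rN}g(f)^{N}=0$ in $\End_{\C}(M)$, where $N=N(M)$. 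Choose $a,b\in\F[x]$ with $a\,x^{rN}+b\,g^{N}=1$, and define
\[
  e_f:=b(f)\,g(f)^N .
\]
Then $e_f$ lies in the commutative subalgebra $\F[f]\subseteq\End_{\C}(M)$. Moreover $e_f(1-e_f)=a(f)b(f)f^{rN}g(f)^N=0$, so $e_f$ is an idempotent commuting with $f$. Since $\C$ is idempotent complete, $e_f$ splits $M$ as $\operatorname{im}(e_f)\oplus\operatorname{im}(1-e_f)$.

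Next I would check the two properties of the restrictions.
\begin{itemize}
\item On $\operatorname{im}(e_f)$ we have $f^{rN}e_f=b(f)f^{rN}g(f)^N=0$, so $f$ restricted there is nilpotent.
\item On $\operatorname{im}(1-e_f)=\operatorname{im}(a(f)f^{rN})$ we have $g(f)^N(1-e_f)=0$. Coprimality gives $c,d$ with $c\,x+d\,g^N=1$, so $c(f)$ restricted to this summand inverts $f$.
\end{itemize}
Applying the exact additive functor $E_{**}$ turns $e_f$ into $b(\bar f)g(\bar f)^N$, which is the standard Fitting projector of $\bar f$ onto the generalized $0$-eigenspace. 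This is because $g(\bar f)$ is invertible on the generalized $0$-eigenspace and vanishes on the complementary summand. Hence $E_{**}$ recovers the ordinary Fitting decomposition. Bidegree compatibility is automatic, since everything is a polynomial in a bidegree $(0,0)$ map.

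For uniqueness and functoriality, I would prove the following: if $uf=f'u$ with $u\colon M\to M'$, then $ue_f=e_{f'}u$. Uniqueness follows by applying this to $u=\id$ with any other idempotent $e$ having the stated properties.

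\begin{itemize}
\item First, I would show that any decomposition $M=A\oplus B$ with $f|_A$ nilpotent, $f|_B$ invertible, and $e$ commuting with $f$ has $A=\bigcup_k\ker f^k$-like characterizations. In a triangulated category I would phrase these via morphisms rather than elements.
\item Concretely, put $K:=\max$ of the nilpotence orders. For large $K$, $e=\phi(f^K)$ for a suitable expression: since $f$ is nilpotent on $A$ and invertible on $B$, $f^K$ kills $A$ and is invertible on $B$.
\item Cleaner route: use the polynomial identity. The idempotent $e_f$ is a polynomial in $f$, hence commutes with $e$, and so $e_f e$ is an idempotent. On $\operatorname{im}(e e_f)$, $f$ is both nilpotent and, via $\operatorname{im}(e_f)\cap\operatorname{im}(1-e)$-type arguments, invertible.
\item Precisely: $e(1-e_f)$ projects onto a summand on which $f$ is nilpotent (being in $\operatorname{im} e$) and invertible (being in $\operatorname{im}(1-e_f)$). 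A summand with both properties is zero, since an invertible nilpotent endomorphism forces the identity to vanish. Thus $e(1-e_f)=0$, and symmetrically $(1-e)e_f=0$, giving $e=e_f$.
\item Functoriality works the same way. The morphism $(1-e_{f'})\,u\,e_f\colon\operatorname{im}e_f\to\operatorname{im}(1-e_{f'})$ intertwines a nilpotent map with an invertible one, so it equals $f'^{-k}\,(\cdot)\,f^k=0$ for large $k$. Likewise $e_{f'}u(1-e_f)=0$. Together these give $ue_f=e_{f'}u$.
\end{itemize}

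The main obstacle is already absorbed into Corollary~\ref{cor:nilpotence-detection}(iii), namely that $f$ satisfies a polynomial. Given that, the remaining delicate point is writing the nilpotent-versus-invertible vanishing argument purely in terms of composites and idempotent splittings, never elements, so that it works in $\C$.
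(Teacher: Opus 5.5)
Your proposal is correct and follows the paper's proof essentially step for step: the same projector $e_f=b(f)g(f)^N$ built from $\mu_{\bar f}(f)^{N(M)}=0$ via Corollary~\ref{cor:nilpotence-detection}(iii), the same B\'ezout identities for nilpotence on $\operatorname{im}(e_f)$ and invertibility on $\operatorname{im}(1-e_f)$, and the same nilpotent-versus-invertible argument killing $(1-e_{f'})ue_f$ and $e_{f'}u(1-e_f)$ for functoriality. The only difference is in uniqueness. The paper reruns the functoriality argument with $u=1_M$ for two arbitrary Fitting projectors, whereas your ``precise'' route uses that $e_f\in\F[f]$ commutes with any competitor $e$ and shows directly that $e(1-e_f)=0$ and $(1-e)e_f=0$. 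Both routes work: in the second vanishing, the inverse of $f$ on $\operatorname{im}(1-e)$ commutes with $e_f$, since it inverts a map that commutes with $e_f$. The vaguer exploratory bullets before it can simply be dropped.
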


The proof has four principal ingredients.  First, stable
$\mathbb A^1$-connectivity makes the family $\{T^q\}_{q\in\mathbb Z}$
negative, and hence produces a bounded weight structure.  Second, quadratic
transfers give $\End_{\C}(\X)=\F$.  Third, this calculation rules out exotic
retracts in the weight heart and makes extension of scalars full and
conservative on that heart.  Bondarko's weight-detection theorem then proves
\cref{thm:pure-nakayama}.  Finally, nilpotence follows by combining a finite
matrix argument on a minimal strong weight complex with a finite-filtration
''ghost argument'' for the kernel of the strong weight-complex functor.

\subsection*{Outline}

The article is organized as follows.  In \cref{sec:isotropic-input} we fix
conventions, recall the isotropic construction, and state the $E$-linear
input.  Connectivity is proved in \cref{sec:connectivity}.
\Cref{sec:weights} recalls weight structures and constructs the diagonal
weight structures.  The endomorphism calculation appears in
\cref{sec:end-unit}.  The heart and minimal complexes are studied in
\cref{sec:heart,sec:minimal-complexes}.  We prove the Nakayama and Picard
theorems in \cref{sec:nakayama,sec:picard}.  The exact ''weight profile'',
nilpotence, polynomial lifting, Fitting decomposition, and Krull--Schmidt consequences are
proved in \cref{sec:nilpotence}.

\section{Isotropic and cellular input}\label{sec:isotropic-input}

\subsection{Bigrading and finite cellular objects}

For a motivic spectrum $Y$, we use the convention
\[
  \pi_{p,q}(Y):=[\Sigma^{p,q}\one,Y]_{\SH(k)}.
\]
If $R$ is a motivic $E_\infty$-ring, then
$R\text{-}\Mod_{\cell}$ denotes the localizing subcategory of $R$-modules
generated by the free cells $\Sigma^{p,q}R$; see
\cite{DuggerIsaksen,TananiaCellular}.

\begin{convention}\label{conv:finite-cell}
An object of $R\text{-}\Mod_{\cell}$ is called \emph{finite cellular} if it belongs to
\[
  \thick\{\Sigma^{p,q}R\mid p,q\in\mathbb Z\},
\]
the smallest stable idempotent-complete subcategory containing the cells.

\end{convention}

Since the cells are compact and generate $R\text{-}\Mod_{\cell}$, the
standard compact-generation theorem gives
\[
  (R\text{-}\Mod_{\cell})^\omega
  =\thick\{\Sigma^{p,q}R\mid p,q\in\mathbb Z\},
\]
see \cite[Lemma~4.4.5]{NeemanTriangulated}.  We write
$\Add(\mathcal S)$ for
closure of a collection $\mathcal S$ under finite direct sums and isomorphisms,
and $\Kar(\mathcal S)$ for its retraction closure.

\subsection{Flexible fields and the isotropic sphere}

\begin{definition}\label{def:flexible}
A field is \emph{flexible} if it is isomorphic to a purely transcendental
extension
\[
  k_0(t_1,t_2,\ldots)
\]
of countably infinite transcendence degree.  A connected $k$-variety $V$ is
\emph{anisotropic modulo $2$} if every closed point of $V$ has even degree
over $k$.
\end{definition}

Both the terminology and the systematic use of flexible fields in isotropic
motivic categories originate in Vishik's work \cite{VishikIsotropic}; compare
\cite[Definitions~3.1--3.4]{TananiaCellular} for the stable setting.

Choose a set of representatives of the isomorphism classes of connected
anisotropic mod-$2$ varieties over $k$, and form their
coproduct $Q$ as a presheaf on $\Sm_k$.  Its augmented \v{C}ech simplicial
presheaf is
\[
  \check C(Q)_n=Q^{\times(n+1)},
\]
with face maps given by partial projections and degeneracy maps by partial
diagonals.  The augmentation is induced by $Q\to\Spec k$.
Throughout, $\Sigma^\infty_+\check C(Q)$ denotes the suspension spectrum of
the geometric realization of this simplicial presheaf.

\begin{definition}\label{def:isotropic-sphere}
The \emph{isotropic sphere} $\X$ is the cofiber in $\SH(k)$ of the augmented
\v{C}ech map:
\begin{equation}\label{eq:isotropic-cofiber}
  \Sigma^\infty_+\check C(Q)\longrightarrow\one\longrightarrow\X.
\end{equation}
The isotropic stable motivic homotopy category is the essential image of the
smashing localization $\X\wedge-$; it is denoted by $\SH(k/k)$.
\end{definition}

Tanania proves that $\X$ is an idempotent $E_\infty$-ring and that
\[
  \SH(k/k)\simeq\X\text{-}\Mod
\]
as stable $\infty$-categories; see
\cite[Proposition~6.1]{TananiaStable} and
\cite[Proposition~3.5]{TananiaCellular}.
Since $\X$ is an idempotent commutative algebra object, the standard
equivalence between $\X$-modules and $\X$-local objects is symmetric
monoidal; under it, $\wedge_{\X}$ corresponds to the ambient smash product
on $\X$-local objects.

The following well-known and elementary consequence of the \v{C}ech construction will be
used in the endomorphism calculation.

\begin{lemma}
\label{lem:anisotropic-killed}
If $V$ is a connected anisotropic mod-$2$ variety over $k$, then
$\X\wedge\Sigma^\infty_+V\simeq0.$
\end{lemma}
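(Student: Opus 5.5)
The plan is to show that the unit map $\one\to\X$ becomes an equivalence after smashing with $\Sigma^\infty_+V$, and that this smash product with $\one$ is killed by $\X$. Smashing the cofiber sequence \eqref{eq:isotropic-cofiber} with $\Sigma^\infty_+V$ gives a cofiber sequence
\[
  \Sigma^\infty_+\bigl(\check C(Q)\times V\bigr)\longrightarrow\Sigma^\infty_+V\longrightarrow\X\wedge\Sigma^\infty_+V.
\]
Here we use that $\Sigma^\infty_+$ is symmetric monoidal and commutes with geometric realization, so that $\Sigma^\infty_+\check C(Q)\wedge\Sigma^\infty_+V\simeq\Sigma^\infty_+(\check C(Q)\times V)$. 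It therefore suffices to show that the projection $\check C(Q)\times V\to V$ is a motivic (indeed objectwise or Nisnevich-local) weak equivalence of simplicial presheaves.

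The key step is the standard \v{C}ech fact. If a presheaf $Y$ admits a map $Y\to Q$, then $\check C(Q)\times Y\to Y$ is a simplicial homotopy equivalence sectionwise. Indeed, over any $U\in\Sm_k$ and any section $y\in Y(U)$, the fiber of $\check C(Q)(U)\to\mathrm{pt}$ is the simplicial set $n\mapsto Q(U)^{n+1}$. This is the \v{C}ech nerve of the map of sets $Q(U)\to\mathrm{pt}$, and it is contractible as soon as $Q(U)\neq\emptyset$, via the extra degeneracy given by inserting the chosen point. Functorially in $(U,y)$, the map $Y\to Q$ provides such a point, namely the image of $y$, so the contraction is natural. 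Hence $\check C(Q)\times Y\to Y$ is a sectionwise weak equivalence.

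We apply this with $Y=V$. Since $V$ is connected and anisotropic mod $2$, it is isomorphic to one of the chosen representatives, so there is a summand inclusion $V\hookrightarrow Q$. The projection $\check C(Q)\times V\to V$ is therefore an equivalence, its suspension spectrum map is an equivalence in $\SH(k)$, and the cofiber $\X\wedge\Sigma^\infty_+V$ vanishes.

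The only point requiring care is the set-theoretic one: $V$ itself must appear, up to isomorphism, among the representatives forming $Q$. This holds by construction, because we chose representatives of all isomorphism classes of connected anisotropic mod-$2$ varieties. The rest is routine: the commutation of $\Sigma^\infty_+$ with products and realization, and the contractibility of \v{C}ech nerves of maps admitting a section.
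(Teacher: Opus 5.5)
Your proof is correct and follows essentially the same route as the paper. In both, the summand inclusion $V\hookrightarrow Q$ supplies the extra degeneracy that makes $\check C(Q)\times V\to V$ a sectionwise homotopy equivalence, and smashing \eqref{eq:isotropic-cofiber} with $\Sigma^\infty_+V$ then gives $\X\wedge\Sigma^\infty_+V\simeq0$.
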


\begin{proof}
The inclusion
$i\colon V\hookrightarrow Q$ of this component gives a section
$v\mapsto(i(v),v)$ of the projection $Q\times V\to V$.  After base
change along $V\to\Spec k$, the augmented simplicial presheaf
$\check C(Q)\times V\to V$ is therefore the augmented \v{C}ech nerve of a
map with a section.  The standard extra-degeneracy identities
show that this augmented simplicial object is contractible over $V$.  Moreover,
\[
  \Sigma^\infty_+\check C(Q)\wedge\Sigma^\infty_+V
  \simeq
  \Sigma^\infty_+(\check C(Q)\times V).
\]
Smashing \eqref{eq:isotropic-cofiber} with $\Sigma^\infty_+V$ therefore gives
a cofiber sequence whose first map is an equivalence, and the last term is zero.
\end{proof}

\subsection{The isotropic Brown--Peterson object}

Let $\MBP$ be Vezzosi's motivic Brown--Peterson spectrum at the prime $2$
\cite{VezzosiMBP}, and put
\[
  E:=\X\wedge\MBP.
\]
Tanania proves that $E$ admits an $E_\infty$-ring structure
\cite[Proposition~7.1]{TananiaCellular}.  We use the following 
results from Sections~6--7 of that paper.

\begin{theorem}\label{thm:tanania-input}
Let $E=\X\wedge\MBP$.
\begin{enumerate}[label=\textup{(\roman*)}]
\item There is an isomorphism of bigraded rings
\[
  \pi_{**}(E)\cong\F,
\]
where the right-hand side is concentrated in bidegree $(0,0)$.
\item Every cellular $E$-module is a coproduct of bigraded suspensions of
$E$.
\item For cellular $E$-modules $Y$ and $Z$, the canonical map
\[
  [Y,Z]_E\longrightarrow
  \Hom_{\F}^{0,0}\bigl(\pi_{**}Y,\pi_{**}Z\bigr)
\]
is an isomorphism.  Equivalently, $\pi_{**}$ is an equivalence of
triangulated categories from cellular $E$-modules to bigraded
$\F$-vector spaces with the split triangulation described in
\cite[Remark~7.5]{TananiaCellular}; triangulated suspension shifts the first
bigrading index.
\end{enumerate}
\end{theorem}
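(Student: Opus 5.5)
The plan is to treat (i) as the genuine computational input from \cite[Sections~6--7]{TananiaCellular} and to deduce (ii) and (iii) formally from (i). The formal part uses only that $E$ is an $E_\infty$-ring whose bigraded homotopy is the field $\F$ in bidegree $(0,0)$.

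For (i), I would follow Tanania's computation rather than reprove it. The main steps are as follows.
\begin{enumerate}
\item Write $\MBP$ as a cellular spectrum whose coefficient ring over $k$ is controlled by the Hopkins--Morel description, i.e.\ generated over motivic cohomology by the classes $v_i$ in bidegrees $(2(2^i-1),2^i-1)$.
\item Smash the cofiber sequence \eqref{eq:isotropic-cofiber} with $\MBP$ and compute the isotropic coefficients.
\item Use flexibility of $k$ to produce, for each positive-degree generator (the $v_i$, and the non-unit classes coming from $\pi_{**}$ of the sphere and from motivic cohomology of $k$), an anisotropic mod-$2$ variety, such as a Pfister or norm quadric, through which the class factors.
\item Conclude from \cref{lem:anisotropic-killed} that each such class dies after smashing with $\X$.
\item Show separately that the unit survives and that $2$ becomes zero, leaving $\F$ in bidegree $(0,0)$.
\end{enumerate}
This is the main obstacle. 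It requires a careful analysis of isotropic motivic cohomology, which Tanania shows is the dual of the mod-$2$ Milnor subalgebra, together with a convergence argument for the relevant Adams-type spectral sequence. In this paper I would simply cite \cite[Sections~6--7]{TananiaCellular} for it.

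For (ii), let $Y$ be a cellular $E$-module. Since $\pi_{**}(E)=\F$ is a bigraded field, $\pi_{**}Y$ is a bigraded $\F$-vector space. Choose a homogeneous basis $\{y_\alpha\}$ with $y_\alpha$ in bidegree $(p_\alpha,q_\alpha)$, and represent each $y_\alpha$ by a map $\Sigma^{p_\alpha,q_\alpha}E\to Y$ of $E$-modules via the free--forgetful adjunction. The induced map
\[
  \bigoplus_\alpha\Sigma^{p_\alpha,q_\alpha}E\longrightarrow Y
\]
is an isomorphism on $\pi_{**}$, because cells are compact and $\pi_{**}$ of a free module is $\F$ shifted. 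A map of cellular modules that induces an isomorphism on $\pi_{**}=[\Sigma^{*,*}E,-]_E$ is an equivalence, since its cofiber is cellular and right orthogonal to all generators, hence zero. This proves (ii).

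For (iii), by (ii) we may assume $Y=\bigoplus_\alpha\Sigma^{p_\alpha,q_\alpha}E$. Then
\[
  [Y,Z]_E=\prod_\alpha\pi_{p_\alpha,q_\alpha}Z,
\]
and this product is exactly the set of degree-$(0,0)$ $\F$-linear maps $\pi_{**}Y\to\pi_{**}Z$ determined on the basis. Naturality of this identification shows that the canonical map is a bijection, so $\pi_{**}$ is fully faithful; by (ii) it is also essentially surjective onto bigraded $\F$-vector spaces. Every triangle becomes split because every object is a sum of cells and every morphism is determined on homotopy. Hence the triangulated structure transported along $\pi_{**}$ is the split one of \cite[Remark~7.5]{TananiaCellular}, with $[1]=\Sigma^{1,0}$ shifting the first index.
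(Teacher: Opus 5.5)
Your proposal is correct. For (i) it agrees with the paper, which likewise just invokes \cite[Theorem~6.6]{TananiaCellular}. The difference is in (ii) and (iii): the paper again argues purely by citation (Proposition~7.2, Corollary~7.3, Theorem~7.4 and Remark~7.5 of \cite{TananiaCellular}), while you derive both formally from (i).

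Your derivation is sound. Each $\pi_{p,q}Y$ is a $\pi_{0,0}(E)=\F$-vector space. A homogeneous basis defines a map from a coproduct of cells, and this map is a $\pi_{**}$-isomorphism because the cells are compact. Its cofiber lies in the localizing subcategory generated by the cells and is right orthogonal to them, so it vanishes. Then $[\bigoplus_\alpha\Sigma^{p_\alpha,q_\alpha}E,Z]_E\cong\prod_\alpha\pi_{p_\alpha,q_\alpha}Z$ gives full faithfulness.

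What your route buys is transparency. It shows that everything the paper later uses about $\D$ (for instance Lemma~\ref{lem:compact-E-modules}) rests only on the single computation $\pi_{**}(E)\cong\F$ together with compact generation. The citation buys brevity and literal agreement with Tanania's formulation of the triangulated structure.

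Two points deserve tightening. First, essential surjectivity comes from realizing an arbitrary homogeneous basis by a coproduct of cells, not from (ii). Second, the claim that every triangle is split is best justified as follows: put $\pi_{**}(u)$ in the normal form $\mathrm{diag}(\id,0)$, lift the changes of basis by full faithfulness, and note that the cone of $\id_A\oplus 0\colon A\oplus B\to A\oplus C$ is $C\oplus B[1]$.

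Finally, do not mistake your sketch of (i) for an argument. Killing the $v_i$ and other classes with anisotropic quadrics only controls the image of $\pi_{**}\MBP\to\pi_{**}(E)$. The boundary map of the \v{C}ech cofiber sequence \eqref{eq:isotropic-cofiber} can create classes outside that image; isotropic motivic cohomology of the point already has classes in bidegrees where $H^{**}(k;\F)$ vanishes. The real content is Tanania's isotropic Adams spectral sequence computation and its convergence, so deferring to the citation, as you do, is the right decision.
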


\begin{proof}
Part (i) is \cite[Theorem~6.6]{TananiaCellular}.  The splitting in (ii) is
\cite[Proposition~7.2]{TananiaCellular}; full faithfulness and the resulting
triangulated equivalence are \cite[Corollary~7.3, Theorem~7.4 and
Remark~7.5]{TananiaCellular}.
\end{proof}

The compact part of this description requires a small argument that will be
used repeatedly.

\begin{lemma}\label{lem:compact-E-modules}
An object $Y$ of $E\text{-}\Mod_{\cell}$ is compact if and only if it is a
finite direct sum of bigraded suspensions of $E$.  Hence $\D$ is equivalent,
as a triangulated category, to the category of finite-dimensional bigraded
$\F$-vector spaces.
\end{lemma}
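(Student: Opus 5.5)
The plan is to use the splitting of \cref{thm:tanania-input}(ii) and then apply compactness to the resulting coproduct decomposition.

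\emph{Sums of cells are compact.} Each cell $\Sigma^{p,q}E$ is compact in $E\text{-}\Mod_{\cell}$, since it corepresents $\pi_{p,q}$ of the underlying spectrum and homotopy groups commute with coproducts. Finite sums of compact objects are compact, so every finite direct sum of bigraded suspensions of $E$ is compact.

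\emph{Compact objects are finite sums.} Let $Y$ be compact. By \cref{thm:tanania-input}(ii) there is an equivalence
\[
  Y\simeq\bigoplus_{i\in I}\Sigma^{p_i,q_i}E .
\]
Compactness says that $\id_Y$ factors through a finite subsum $\bigoplus_{i\in I_0}\Sigma^{p_i,q_i}E$. Hence $Y$ is a retract of that finite subsum, so the complementary summand $\bigoplus_{i\notin I_0}\Sigma^{p_i,q_i}E$ is a retract of zero.

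To make this last step concrete, apply $\pi_{**}$ and use \cref{thm:tanania-input}(i). The bigraded vector space $\pi_{**}Y$ is then
\[
  \pi_{**}Y\cong\bigoplus_{i\in I}\Sigma^{p_i,q_i}\F .
\]
The retraction shows that this is a retract of the finite-dimensional space $\bigoplus_{i\in I_0}\Sigma^{p_i,q_i}\F$, so $\pi_{**}Y$ is finite-dimensional. Therefore $I$ itself is finite, and $Y$ is a finite sum of cells.

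\emph{Identifying $\D$.} The equivalence of \cref{thm:tanania-input}(iii) from cellular $E$-modules to bigraded $\F$-vector spaces is triangulated. It restricts to a fully faithful functor from compact objects, whose image consists exactly of the finite-dimensional bigraded spaces, because every such space is $\pi_{**}$ of a finite sum of cells. The compact objects form a triangulated subcategory, and finite-dimensional spaces are closed under the split triangulation. Since $\D$ is by definition the homotopy category of these compact objects, we conclude that $\D$ is equivalent, as a triangulated category, to the category of finite-dimensional bigraded $\F$-vector spaces.

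\emph{Main obstacle.} The only delicate point is the passage from ``retract of a finite sum'' to ``finite sum''. I handle it by the dimension count above, which relies on \cref{thm:tanania-input}(i) giving $\pi_{**}(\Sigma^{p,q}E)$ one-dimensional. An alternative would be to note that the finite-dimensional bigraded vector spaces already form an idempotent-complete category.
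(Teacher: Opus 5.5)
Your proof is correct and follows the paper's argument: split $Y$ via \cref{thm:tanania-input}(ii), use compactness to factor through a finite subcoproduct, and apply $\pi_{**}$ to see that $Y$ is a retract of a finite-dimensional bigraded vector space. The only difference is the last step, which is cosmetic: you count dimensions against $\pi_{**}Y\cong\bigoplus_{i\in I}\Sigma^{p_i,q_i}\F$ to force $I$ to be finite, whereas the paper lifts the finite-dimensional retract back to a finite sum of cells using full faithfulness of $\pi_{**}$.
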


\begin{proof}
A finite direct sum of cells is compact.  Conversely, by
\cref{thm:tanania-input}, write
\[
  Y\simeq\bigoplus_{i\in I}\Sigma^{p_i,q_i}E.
\]
If $Y$ is compact, the chosen isomorphism from $Y$ to this coproduct factors
through a finite subcoproduct.  Composing the factorization with the inverse
isomorphism shows that $Y$ is a retract of a finite direct sum of cells.
After applying $\pi_{**}$, it becomes a retract of a finite-dimensional
bigraded vector space.  Full
faithfulness of $\pi_{**}$ lifts this decomposition to a finite direct sum of
bigraded suspensions of $E$.
\end{proof}

Since extension of scalars is exact, preserves retracts, and sends
$\Sigma^{p,q}\X$ to $\Sigma^{p,q}E$, it restricts to the functor
$F\colon\C\to\D$ used above.

\section{Connectivity of the isotropic sphere}\label{sec:connectivity}

We use Morel's homotopy $t$-structure on $\SH(k)$.  Its connective part is
the extension and colimit-closure of the spectra
$\Sigma^{p,q}\Sigma^\infty_+U$, where $U\in\Sm_k$ and $p-q\geq0$.  In
particular, if $Y$ is connective, then
\[
  \pi_{a,b}(Y)=0
  \qquad\text{for }a-b<0.
\]
Morel's stable $\mathbb A^1$-connectivity theorem
\cite{MorelConnectivity} says that the suspension spectrum of every smooth
scheme is connective; for arbitrary fields one may use Druzhinin's general
form specialized to $\Spec k$ \cite[Theorem~1]{DruzhininConnectivity}.
The connective aisle is closed under colimits and extensions; in particular,
it is closed under geometric realizations and under cofibers of maps between
connective spectra.

\begin{lemma}
\label{lem:presheaf-connective}
Let $P$ be any presheaf of sets on a fixed small skeleton of $\Sm_k$.  Then
$\Sigma^\infty_+P$ is connective.  The same holds levelwise and after
geometric realization for a simplicial presheaf of sets.
\end{lemma}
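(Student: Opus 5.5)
The plan is to reduce to representable presheaves and then use closure properties of the connective aisle. Any presheaf of sets $P$ on the small skeleton is canonically a colimit of representables: $P\simeq\operatorname{colim}_{(U,x)\in\int P}h_U$ over its category of elements. The catch is that in the $\infty$-categorical (motivic) setting, the ordinary colimit of sets is not the homotopy colimit. So instead I would take the canonical simplicial resolution, namely the bar construction
\[
  B_n(P)=\coprod_{U_0\to\cdots\to U_n\to P} h_{U_0},
\]
whose geometric realization is weakly equivalent to $P$ as a simplicial presheaf. This holds because, evaluated at each object $V$, the augmented simplicial set $B_\bullet(P)(V)\to P(V)$ has an extra degeneracy, coming from the identity of $V$ together with the chosen element. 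Since weak equivalences of simplicial presheaves are detected sectionwise, they become equivalences after motivic localization and $\Sigma^\infty_+$.

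Next I would use that $\Sigma^\infty_+$ preserves colimits, being a left adjoint on the $\infty$-category of pointed motivic spaces. It therefore sends the realization to the geometric realization of the simplicial spectrum $n\mapsto\bigoplus\Sigma^\infty_+U_0$. Each term is a coproduct of suspension spectra of smooth schemes. These are connective by Morel's connectivity theorem (or by Druzhinin's version over general fields), and in any case each $\Sigma^\infty_+U$ already lies among the generators of the aisle, with $p=q=0$. Coproducts and geometric realizations are colimits, and the connective aisle is closed under colimits, so $\Sigma^\infty_+P$ is connective.

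For a simplicial presheaf of sets $P_\bullet$, the levelwise statement is the case just proved. The realization $|P_\bullet|$, computed in presheaves of spaces, is the colimit of the $P_n$, and $\Sigma^\infty_+$ commutes with it. So $\Sigma^\infty_+|P_\bullet|\simeq|\Sigma^\infty_+P_n|$ is again a colimit of connective spectra, hence connective. In particular this covers $\check C(Q)$. Note that $Q$ is a coproduct of presheaves represented by varieties that need not be smooth; viewed as presheaves on $\Sm_k$, they are handled by the general-presheaf case, and this generality is exactly why the lemma is stated for arbitrary presheaves.

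The main obstacle is the first step: justifying that the ordinary presheaf $P$ and the homotopy colimit of its representables agree after passing to $\SH(k)$. The point is to use the bar resolution rather than the naive colimit. Everything else is formal from the closure properties of the aisle recalled above.
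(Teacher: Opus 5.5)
Your proof is correct and follows the paper's route. You present $P$ as a colimit of representables over its category of elements, use that $\Sigma^\infty_+$ preserves colimits and that the connective aisle is colimit-closed, and then treat the simplicial case levelwise before passing to the realization. Your bar resolution is an explicit model of the same colimit, and the worry behind it is harmless: for a set-valued presheaf, the colimit over $\int P$ is already an $\infty$-colimit by the $\infty$-categorical co-Yoneda lemma, and your extra-degeneracy computation is exactly a verification of this.
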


\begin{proof}
The canonical density presentation writes $P$ as the colimit of representable
presheaves over its category of elements:
\[
  P\cong\mathop{\operatorname{colim}}_{(X,x)\in\int P}h_X.
\]
The functor $\Sigma^\infty_+$ preserves colimits, each
$\Sigma^\infty_+X$ is connective by stable $\mathbb A^1$-connectivity, and
the connective aisle is closed under colimits.  This proves the first
assertion.  Applying the same argument in every simplicial degree and then
taking the geometric realization proves the second.
\end{proof}

\begin{proposition}
\label{prop:X-connective}
The isotropic sphere $\X$ is connective.  Consequently,
\[
  \pi_{a,b}(\X)=0
  \quad\text{whenever }a-b<0.
\]
\end{proposition}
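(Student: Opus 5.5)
The plan is to use the cofiber sequence \eqref{eq:isotropic-cofiber},
\[
  \Sigma^\infty_+\check C(Q)\longrightarrow\one\longrightarrow\X,
\]
and show that its first two terms are connective.  The connective aisle of Morel's homotopy $t$-structure is closed under colimits and extensions, hence under cofibers of maps between connective objects.  Indeed, the cofiber of $A\to B$ is the pushout of $0\leftarrow A\to B$, which is a colimit of connective objects.  So connectivity of $\X$ follows once both terms are connective.

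For the unit, $\one=\Sigma^\infty_+\Spec k$ is the suspension spectrum of a smooth scheme, so it is connective by stable $\mathbb A^1$-connectivity \cite{MorelConnectivity,DruzhininConnectivity}.

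For $\Sigma^\infty_+\check C(Q)$, apply \cref{lem:presheaf-connective}.  Each level $\check C(Q)_n=Q^{\times(n+1)}$ is a presheaf of sets on $\Sm_k$: $Q$ is a coproduct of representable presheaves, and finite products of presheaves of sets are again presheaves of sets.  The lemma then gives connectivity levelwise, and hence after geometric realization.  The only point needing care is that $Q$ is a set-indexed coproduct of presheaves.  The representatives are chosen from isomorphism classes of varieties, so they form a set, and any size issues are avoided by working on the fixed small skeleton as in the lemma.  The varieties in $Q$ need not be smooth a priori.  I would handle this by restricting their representable presheaves to $\Sm_k$, which is how $Q$ is regarded as a presheaf on $\Sm_k$; the density presentation in \cref{lem:presheaf-connective} then applies to any presheaf of sets.

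The consequence follows from the stated property of connective spectra: for connective $Y$, $\pi_{a,b}(Y)=0$ whenever $a-b<0$.  If one wants to check this directly, $\Sigma^{a,b}\one$ lies in the shifted aisle, and maps from it into a connective object vanish by the $t$-structure orthogonality, using that the homotopy sheaves in negative degree vanish.

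I expect the main obstacle to be the size and smoothness bookkeeping for $Q$ just described.  The formal cofiber argument itself is routine.
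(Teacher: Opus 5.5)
Your argument is correct and is essentially the paper's proof: each level $\check C(Q)_n$ is connective by \cref{lem:presheaf-connective}, this passes to the geometric realization, $\one$ is connective, and the aisle is closed under cofibers, with the vanishing of $\pi_{a,b}(\X)$ read off from the recalled property of connective spectra. The one slip is in your optional direct check: $t$-structure orthogonality only kills maps from $\SH(k)_{\geq n}$ into $\SH(k)_{\leq n-1}$, and here the target is connective, not coconnective. The vanishing instead comes from identifying $\pi_{a,b}(Y)$ with the sections over $\Spec k$ of a homotopy sheaf of $Y$ in degree $a-b<0$, which is zero for connective $Y$.
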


\begin{proof}
Every level $\check C(Q)_n=Q^{\times(n+1)}$ is a presheaf of sets.  Hence Lemma 
\ref{lem:presheaf-connective} shows that
$\Sigma^\infty_+\check C(Q)_n$ is connective.  Geometric realization is a
colimit, so $\Sigma^\infty_+\check C(Q)$ is connective as well.  The sphere
$\one$ is connective.  Since the connective aisle is extension-closed, the
cofiber sequence \eqref{eq:isotropic-cofiber} implies that $\X$ is
connective.
\end{proof}

\section{Weight structures and the diagonal construction}\label{sec:weights}

We recall the conventions on weight structures that will be used throughout;
see \cite{BondarkoFoundations}.

\begin{definition}\label{def:weight-structure}
A \emph{weight structure} $w$ on an idempotent-complete triangulated category
$\mathcal T$ is a pair of retraction-closed full subcategories
$(\mathcal T_{w\le0},\mathcal T_{w\ge0})$ such that:
\begin{enumerate}[label=\textup{(\roman*)}]
\item $\mathcal T_{w\le0}\subseteq\mathcal T_{w\le0}[1]$ and
$\mathcal T_{w\ge0}[1]\subseteq\mathcal T_{w\ge0}$;
\item $\mathcal T_{w\le0}\perp\mathcal T_{w\ge1}$, where
$\mathcal T_{w\ge1}:=\mathcal T_{w\ge0}[1]$;
\item for every $M\in\mathcal T$ there is a distinguished triangle
\[
  A\longrightarrow M\longrightarrow B\longrightarrow A[1]
\]
with $A\in\mathcal T_{w\le0}$ and $B\in\mathcal T_{w\ge1}$.
\end{enumerate}
We put
\[
  \mathcal T_{w\le n}:=\mathcal T_{w\le0}[n],
  \quad
  \mathcal T_{w\ge n}:=\mathcal T_{w\ge0}[n],
  \quad
  \mathcal T_{w=n}:=\mathcal T_{w\le n}\cap\mathcal T_{w\ge n}.
\]
The \emph{heart} is
$\mathcal H_w:=\mathcal T_{w=0}$.  The weight structure is \emph{bounded} if
for every $M$ there are integers $a\le b$ with
$M\in\mathcal T_{w\ge a}\cap\mathcal T_{w\le b}$; this intersection is
denoted by $\mathcal T_{w\in[a,b]}$.
\end{definition}

A functor between weighted triangulated categories is \emph{weight-exact} if
it preserves both halves of the weight structures.  A full additive
subcategory $\B\subseteq\mathcal T$ is \emph{negative} if
\[
  \Hom_{\mathcal T}(B,B'[n])=0
  \qquad(B,B'\in\B,\ n>0).
\]
It \emph{densely generates} $\mathcal T$ if its thick 
triangulated closure is all of $\mathcal T$.

We use the following standard construction theorem.

\begin{proposition}
\label{prop:negative-generates}
If a negative additive subcategory $\B$ densely generates an
idempotent-complete triangulated category $\mathcal T$, then $\mathcal T$
admits a unique bounded weight structure for which $\B$ is contained in the
heart.  Its heart is $\Kar_{\mathcal T}(\B)$.
\end{proposition}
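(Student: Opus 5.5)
This is Bondarko's existence theorem for weight structures generated by a negative subcategory, and I will prove it by reproducing his argument in outline. First I replace $\B$ by $\B':=\Kar_{\mathcal T}(\B)$. This is still negative, since retracts of vanishing Hom groups vanish. I then define
\[
  \mathcal T_{w\le0}:=\Kar\bigl(\text{extension closure of }\textstyle\bigcup_{n\ge0}\B[-n]\bigr)
\]
and
\[
  \mathcal T_{w\ge0}:=\Kar\bigl(\text{extension closure of }\textstyle\bigcup_{n\ge0}\B[n]\bigr),
\]
using the suspension convention of \cref{def:weight-structure}. Axiom (i) of that definition is immediate from these formulas. Axiom (ii) reduces to $\Hom(B[-n],B'[m])=0$ for $n\ge0$ and $m\ge1$, which is exactly negativity. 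The reduction holds because orthogonality passes to extensions in each variable (long exact Hom sequences) and to retracts.

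The main step is the existence of weight decompositions, axiom (iii), together with boundedness. I would let $\mathcal T'$ be the full subcategory of objects $M$ such that $M$ admits a weight decomposition for every shift of the weight structure, and such that $M\in\mathcal T_{w\ge a}\cap\mathcal T_{w\le b}$ for some $a\le b$. Every $B[n]$ lies in $\mathcal T'$ via trivial decompositions. The expected obstacle is closure of $\mathcal T'$ under cones. Given a map $f\colon M\to N$ with decompositions $A_M\to M\to B_M$ and $A_N\to N\to B_N$, orthogonality lets $f$ lift to maps $A_M\to A_N$ and $B_M\to B_N$. The octahedral axiom (equivalently a $3\times3$ diagram) then produces a triangle for the cone. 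I would handle this by the standard trick of first choosing decompositions adapted to $f$. That is, I take a decomposition of $M$ at level $0$ and of $N$ at level $1$ so that the cones land correctly, as in Bondarko's Theorem 4.3.2 / Lemma 1.5.4. Closure under retracts is the remaining subtle point, because weight decompositions of retracts are not automatic. Here I use that $\mathcal T$ is idempotent complete. If $M$ is a retract of $M\oplus M'\in\mathcal T'$, I apply the usual trick: $M\oplus M[1]$, or rather $M\oplus M'\oplus M[1]\oplus\cdots$ in a finite form, is built from objects admitting decompositions, and a decomposition of a retract is obtained by taking the cone of a suitable idempotent-compatible map. Alternatively I cite \cite{BondarkoFoundations} directly for retract-closedness of the class of objects with bounded weights. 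Since $\mathcal T'$ is thick and contains $\B$, and $\B$ densely generates, we get $\mathcal T'=\mathcal T$.

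For the heart, any $H\in\mathcal T_{w=0}$ has a bounded weight filtration by objects of $\B'$. Negativity splits the weight decomposition triangles, which exhibits $H$ as a retract of an object of $\B$ and hence as an object of $\Kar_{\mathcal T}(\B)$. Conversely $\Kar(\B)$ clearly lies in the heart. For uniqueness, in any bounded weight structure containing $\B$ in its heart, both halves must contain the classes defined above, since halves are extension- and retract-closed. Orthogonality then pins each half down as the left or right orthogonal of the other, so the two structures coincide.
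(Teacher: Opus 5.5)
Your outline follows the standard construction; the paper itself gives no argument and simply cites \cite[Corollary~2.1.2]{BondarkoSosnilo}. Most of the outline is sound:
\begin{itemize}
\item the halves are retract-closed extension closures;
\item orthogonality comes from negativity;
\item cones get decompositions from decompositions at adjacent levels plus the $3\times3$ lemma;
\item uniqueness follows from $\mathcal T_{w\le0}={}^{\perp}(\mathcal T_{w\ge1})$.
\end{itemize}

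\textbf{The gap.} You flag, but do not prove, the one delicate step: that a retract $M$ of some $Y=M\oplus M'\in\mathcal T'$ again admits weight decompositions. This step is indispensable, since $\mathcal T=\thick(\B)$ and not merely the triangulated hull of $\B$. The trick you gesture at does not work as stated. It is true that $M\oplus M[1]\cong\operatorname{Cone}(1-e)$ lies in $\mathcal T'$. But a decomposition $A\to M\oplus M[1]\to B$ need not restrict to $M$, because the component $A\to M[1]$ need not vanish. Nor can you split off $M$ using $e$: decompositions are not functorial, so lifts of $e$ to $A$ and $B$ are idempotent only up to maps factoring through $B[-1]\to A$. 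Your fallback citation addresses the wrong statement. Bounded weights pass to retracts trivially, since your halves are retract-closed by definition. What needs proof is existence of decompositions for retracts, which is exactly the idempotent-completion problem that Bondarko--Sosnilo handle.

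\textbf{A fix in the bounded setting.} Use shifts chosen large enough.
\begin{itemize}
\item Since $\mathcal T'$ is closed under shifts, sums and cones, induction gives $M\oplus M[k]\in\mathcal T'$ for every odd $k\ge1$. The case $k=1$ is $\operatorname{Cone}(1-e)$. For the step, $M\oplus M[k+2]$ is the cone of the inclusion of $(M\oplus M[1])[k]$ into $(M\oplus M[k])\oplus(M\oplus M[1])[k+1]$.
\item As a retract of the bounded object $Y$, $M$ lies in $\mathcal T_{w\ge a}$ for some $a$.
\item For a level $n$, pick odd $k\ge n+2-a$ and a decomposition $A\to M\oplus M[k]\to B$ with $A\in\mathcal T_{w\le n}$ and $B\in\mathcal T_{w\ge n+1}$.
\item Orthogonality kills the component $A\to M[k]$, so the map factors through some $a'\colon A\to M$.
\item The octahedral axiom for $A\to M\to M\oplus M[k]$ gives a triangle $M[k-1]\to\operatorname{Cone}(a')\to B\to M[k]$. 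Since $M[k-1]\in\mathcal T_{w\ge n+1}$, we get $\operatorname{Cone}(a')\in\mathcal T_{w\ge n+1}$.
\end{itemize}
Thus $A\to M\to\operatorname{Cone}(a')$ is the required decomposition at level $n$.

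\textbf{A smaller point on the heart.} An object $H\in\mathcal T_{w=0}$ is a priori only a retract of an iterated extension $Z$ of objects of $\B[-n]$, $n\ge0$; it is not itself filtered by $\B'$. You should show by induction that $Z$ has a decomposition $A\to Z\to B''$ with $B''\in\Add(\B)$ and $A$ built from $\B[-n]$, $n\ge1$. Negativity kills the connecting maps between the $\B$-parts in this induction. Then $\Hom(A,H)=0$ shows that $H$ is a retract of $B''$.
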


\begin{proof}
This is \cite[Corollary~2.1.2]{BondarkoSosnilo}.
\end{proof}

Put
\[
  \B:=\Add\{T^q\mid q\in\mathbb Z\}\subseteq\C.
\]

\begin{proposition}
\label{prop:diagonal-weight}
The category $\C$ admits a bounded weight structure $w$ whose heart is
\[
  \Hh:=\C_{w=0}=\Kar_{\C}(\B).
\]
It is characterized by declaring every $T^q$ to have weight zero.  The tensor
product is weight-exact in the sense that
\[
  \C_{w\le0}\wedge_{\X}\C_{w\le0}\subseteq\C_{w\le0},
  \qquad
  \C_{w\ge0}\wedge_{\X}\C_{w\ge0}\subseteq\C_{w\ge0}.
\]

There is likewise a bounded weight structure on $\D$ whose heart is
\[
  \V:=\Add\{U^q\mid q\in\mathbb Z\}.
\]
Under $\pi_{**}$, $\V$ identifies with finite-dimensional
$\mathbb Z$-graded $\F$-vector spaces, where $U^q$ corresponds to the
one-dimensional object $\F(q)$, namely $\F$ placed in Tate degree $q$.
The functor $F\colon\C\to\D$ is
weight-exact, and its restriction to the hearts is
\[
  \Phi\colon\Hh\longrightarrow\V,
  \qquad H\longmapsto E\wedge_{\X}H.
\]
\end{proposition}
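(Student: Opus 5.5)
We will deduce the existence of the weight structure from \cref{prop:negative-generates}. The task is then to check that $\B$ is negative and densely generating, and afterwards to verify the monoidal and comparison statements.

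\emph{Negativity of $\B$.} By additivity it suffices to show
\[
  \Hom_{\C}(T^q,T^{q'}[n])=0 \qquad (n>0).
\]
Using \eqref{eq:suspension-convention} and the adjunction for free $\X$-modules, this group is
\[
  [\Sigma^{q,q}\X,\Sigma^{q'+n,q'}\X]_{\X}\cong[\Sigma^{q-q'-n,\,q-q'}\one,\X]_{\SH(k)}=\pi_{q-q'-n,\,q-q'}(\X).
\]
Its diagonal degree is $(q-q'-n)-(q-q')=-n<0$, so the group vanishes by \cref{prop:X-connective}.

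\emph{Dense generation.} Every cell is $\Sigma^{p,q}\X=T^q[p-q]$, which lies in the triangulated closure of $\B$. Since $\C$ is by definition the thick closure of the cells, $\B$ densely generates $\C$. \Cref{prop:negative-generates} now gives a unique bounded weight structure with heart $\Kar_{\C}(\B)$. Any bounded weight structure with every $T^q$ in the heart has $\B$ in the heart, so uniqueness gives the characterization.

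\emph{Monoidal weight-exactness.} We will use the description of $\C_{w\le0}$ as the smallest retraction-closed, extension-closed subcategory containing the objects $T^q[m]$ with $m\le 0$; dually for $\C_{w\ge0}$ with $m\ge0$. This description is standard for bounded weight structures generated by a negative subcategory (Bondarko). For a fixed cell $T^q[m]$ with $m\le0$, the functor $T^q[m]\wedge_{\X}-$ is exact and preserves retracts. It sends the generators $T^{q'}[m']$ to $T^{q+q'}[m+m']$, and $m+m'\le0$. Hence it preserves $\C_{w\le0}$. Running the same argument in the other variable, using closure under extensions and retracts, gives $\C_{w\le0}\wedge_{\X}\C_{w\le0}\subseteq\C_{w\le0}$; the $w\ge0$ case is the same. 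This generator description is the step I expect to need the most care: the aisles must really equal these extension-retract closures, and one has to cite the right statement in \cite{BondarkoFoundations}.

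\emph{The $E$-side.} In $\D$ we compute
\[
  \Hom_{\D}(U^q,U^{q'}[n])=\pi_{q-q'-n,\,q-q'}(E),
\]
which vanishes unless $(q-q'-n,q-q')=(0,0)$ by \cref{thm:tanania-input}(i). So this group is $0$ for $n>0$, and for $n=0$ it is $\F$ when $q=q'$ and $0$ otherwise. Thus $\V$ is negative. It densely generates $\D$ by \cref{lem:compact-E-modules}. It is already idempotent-complete, since under $\pi_{**}$ it is the category of finite-dimensional $\mathbb Z$-graded vector spaces placed on diagonal $0$, and retracts of such are again such. So $\Kar_{\D}(\V)=\V$, with $U^q\mapsto\F(q)$.

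\emph{$F$ is weight-exact.} $F$ is exact, preserves retracts and extensions, and sends $T^q[m]$ to $U^q[m]$. By the generator descriptions on both sides it maps $\C_{w\le0}$ into $\D_{w\le0}$ and $\C_{w\ge0}$ into $\D_{w\ge0}$. Hence it maps the heart $\Hh$ into $\V$, and the restriction is $\Phi$.
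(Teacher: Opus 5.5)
Your proposal is correct and follows essentially the same route as the paper. Negativity of $\B$ comes from the connectivity of $\X$, and existence, uniqueness and the heart come from the Bondarko--Sosnilo construction theorem. Weight-exactness of both the tensor product and $F$ comes from the extension--retraction-closure description of $\C_{w\le0}$ and $\C_{w\ge0}$; the paper cites Bondarko--Sosnilo, Corollary~2.1.2(3), which is the reference you were looking for. On the $\D$ side the argument is the same calculation with $\pi_{**}(E)=\F$, together with degreewise splitting of idempotents in finite-dimensional bigraded vector spaces.
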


\begin{proof}
For $a,b\in\mathbb Z$ and $n>0$, the free--forgetful adjunction for
$\X$-modules gives
\begin{align*}
  \Hom_{\C}(T^a,T^b[n])
  &\cong
  [\Sigma^{a,a}\one,\Sigma^{b+n,b}\X]_{\SH(k)}\\
  &\cong\pi_{a-b-n,a-b}(\X).
\end{align*}
The difference of the two indices is $-n<0$, so this group vanishes by Proposition 
\ref{prop:X-connective}, so $\B$ is negative.  By
\eqref{eq:suspension-convention}, every cellular generator is a shift of an
object of $\B$.  Thus $\B$ densely generates $\C$, and Proposition 
\ref{prop:negative-generates} gives the asserted bounded weight structure
and its heart.

Since $T^a\wedge_{\X}T^b\simeq T^{a+b}$, tensor products of generators of the
heart remain in the heart.  More explicitly,
\cite[Corollary~2.1.2(3)]{BondarkoSosnilo} identifies
$\C_{w\le0}$ with the extension--retraction closure of the objects
$T^q[i]$ for $i\le0$, and $\C_{w\ge0}$ with the corresponding closure for
$i\ge0$.  Since
\[
  T^a[i]\wedge_{\X}T^b[j]\simeq T^{a+b}[i+j],
\]
the relevant sets of shifts are closed under tensor product.  Exactness of
tensor product in each variable and preservation of retracts now prove the
two inclusions.

For $\D$, one has
\[
  \Hom_{\D}(U^a,U^b[n])
  \cong\pi_{a-b-n,a-b}(E)=0
  \qquad(n>0)
\]
by \cref{thm:tanania-input}.  The objects $U^q$ therefore form a negative
generating family.  The construction theorem first identifies the heart with
the Karoubi closure of $\Add\{U^q\mid q\in\mathbb Z\}$.  Under $\pi_{**}$,
every homogeneous idempotent splits degreewise in a finite-dimensional
bigraded vector space; hence this Karoubi closure is already $\V$.  Finally,
$F(T^q[i])=U^q[i]$.  The generated descriptions of the two weight structures
therefore show directly that $F$ preserves both halves and hence is
weight-exact.
\end{proof}

We shall use the following form of Bondarko's weight-detection theorem.

\begin{proposition}\label{prop:weight-detection}
Let $G\colon\mathcal T\to\mathcal T'$ be a weight-exact functor between
triangulated categories with bounded weight structures.  If the induced
functor on hearts is full and conservative, then:
\begin{enumerate}[label=\textup{(\roman*)}]
\item $G$ is conservative;
\item $G$ detects upper and lower weight bounds;
\item if $G(M)$ is pure of weight $n$, then $M$ is pure of weight $n$.
\end{enumerate}
\end{proposition}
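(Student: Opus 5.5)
The plan is to reduce everything to a statement about bounded complexes over the hearts, using weight complexes and Bondarko's theory \cite{BondarkoFoundations}. Write $G_w\colon\mathcal H_w\to\mathcal H'_{w}$ for the induced functor on hearts. Since the weight structures are bounded, every $M\in\mathcal T$ has a weight complex $t(M)\in K^b(\mathcal H_w)$, well defined up to homotopy equivalence. In our application both categories carry stable $\infty$-enhancements, namely $\Cinf$ and $\Dinf$, so one may use the strong weight complex functor; otherwise the weak version suffices. I would invoke two of Bondarko's results about this construction.
\begin{enumerate}[label=\textup{(\alph*)}]
\item A weight-exact functor is compatible with weight complexes: $t(G(M))\simeq K^b(G_w)(t(M))$.
\item For bounded $w$, the bounds $M\in\mathcal T_{w\ge a}$ (respectively $M\in\mathcal T_{w\le b}$) hold if and only if $t(M)$ is homotopy equivalent to a complex concentrated in the corresponding range of degrees. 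In particular, $t(M)\simeq 0$ forces $M\simeq 0$.
\end{enumerate}
With (a) and (b) in hand, assertions (i)--(iii) reduce to the following statement about complexes: $K^b(G_w)$ detects homotopy-equivalence to complexes concentrated in a given range of degrees.

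To prove this algebraic statement, take a bounded complex $C$ over $\mathcal H_w$ whose image $G_w(C)$ is homotopy equivalent to a complex in degrees $\le m$, and induct on the length of $C$ above $m$. Let $N>m$ be the top nonzero degree. The condition on $G_w(C)$ means that $G_w(d)\colon G_w(C^{N-1})\to G_w(C^N)$ is a split epimorphism, say with section $s'$.
\begin{enumerate}[label=\textup{(\arabic*)}]
\item Fullness lifts $s'$ to $s\colon C^N\to C^{N-1}$ with $G_w(ds)=\id$.
\item Conservativity of $G_w$ then shows that $ds$ is an automorphism of $C^N$, so $d$ is itself a split epimorphism.
\item Since $\mathcal H_w$ is Karoubian, $C^{N-1}\cong C^N\oplus K$, and $C$ is homotopy equivalent to the shorter complex with $K$ in degree $N-1$.
\end{enumerate}
The lower bound is the dual argument, using split monomorphisms in the bottom degree.

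Now combine the pieces.
\begin{itemize}
\item Detection of bounds, assertion (ii), follows from (a), (b) and the algebraic statement.
\item Assertion (iii) is (ii) applied with $a=b=n$.
\item For (i), first treat objects: if $G(M)\simeq 0$, then (ii) places $M$ in $\mathcal T_{w\ge n}$ for every $n$. Boundedness gives $M\in\mathcal T_{w\le b}$ for some $b$, so orthogonality yields $\id_M=0$ and hence $M\simeq 0$.
\item For morphisms, apply this to the cone: if $G(f)$ is an isomorphism, then $G(\mathrm{cone}\,f)\simeq0$, so $\mathrm{cone}\,f\simeq 0$ and $f$ is an isomorphism.
\end{itemize}

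I expect the main obstacle to be step (b), the precise dictionary between weight bounds of $M$ and the degree range of $t(M)$ up to homotopy equivalence. This requires some care with Bondarko's conventions for signs and degrees, and with the weak weight complex only being defined modulo a nilpotent ideal of morphisms. My first approach is to use the strong weight complex available from the enhancement, where (b) is cleanly \cite[Theorem~3.3.1]{BondarkoFoundations}-type material.

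If this turns out to be delicate, the fallback is to argue directly with weight decompositions, again by induction on the width $b-a$ of a weight interval containing $M$. Take $M\in\mathcal T_{w\in[a,b]}$ and the decomposition $w_{\le a}M\to M\to w_{\ge a+1}M$; here $w_{\le a}M\in\mathcal H_w[a]$. Applying $G$ and using $G(M)\in\mathcal T'_{w\ge a+1}$ together with orthogonality, one finds that the heart component splits off after applying $G$. One then lifts this splitting using fullness and conservativity exactly as in the complex argument.
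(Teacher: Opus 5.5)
Your proof is correct, but it takes a different route from the paper, which does not argue for (i) and (ii) at all. The paper cites Bondarko's detection theorem \cite[Theorem~1.5.1(1),(2)]{BondarkoWeightComplexes} for those two parts, and then, exactly as you do, obtains (iii) by applying (ii) to the bounds $\le n$ and $\ge n$.

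You instead unpack that theorem. Using the compatibility (a), which in the paper's setting is precisely \eqref{eq:weight-complex-base-change}, together with the dictionary (b) between weight bounds and degree ranges of weight complexes, you reduce to a purely algebraic fact. A full, conservative additive functor reflects split epimorphisms and split monomorphisms, so in a Karoubian heart the extreme differential of a bounded complex can be cancelled whenever its image can. This is the same Gaussian elimination as in Lemma~\ref{lem:cancellation}. Your derivation of (i) from (ii) via boundedness and orthogonality is also sound.

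What your route buys is transparency: it isolates the single place where fullness and conservativity of the heart functor enter. What it costs is that you do not escape Bondarko's theory, because the nontrivial direction of (b) is itself a detection statement for weight complexes of bounded weight structures. That direction can, however, be proved with the same splitting idea. If the differential $H_{a+1}\to H_a$ coming from a fixed tower is a split epimorphism, then the connecting map $w_{\ge a+1}M\to H_a[a+1]$ is a split epimorphism. Hence $H_a[a]\to M$ vanishes and $M$ is a retract of $w_{\ge a+1}M$.

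For your fallback argument you need one extra observation. Every map from $H[a+1]$, with $H$ in the heart, into an object of weights $\ge a+1$ factors through that object's lowest weight piece, by orthogonality. This is what lets fullness on hearts lift the section.

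Finally, your concern about weak versus strong weight complexes is harmless. A composite of two weakly null-homotopic chain maps is null-homotopic, so that ideal squares to zero in $K^b(\Hh)$. Consequently, isomorphisms in $K^b_{\mathfrak w}(\Hh)$ lift to homotopy equivalences.
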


\begin{proof}
  The first two assertions are
\cite[Theorem~1.5.1(1),(2)]{BondarkoWeightComplexes}.  For (iii), apply (ii)
to the two bounds $\le n$ and $\ge n$.
\end{proof}

For later use we also record the strong weight-complex construction.  Equip
$K^b(\Hh)$ with the stupid weight structure, whose heart is identified with
$\Hh$ by complexes concentrated in degree zero.  Since $\Cinf$ is a stable
$\infty$-category and $w$ is bounded, Sosnilo constructs, uniquely up to
equivalence, an exact
weight-exact functor
\begin{equation}\label{eq:strong-weight-complex}
  t\colon\C\longrightarrow K^b(\Hh)
\end{equation}
which restricts to the identity on $\Hh$; see
\cite[Section~3 and Corollary~3.5]{Sosnilo}.  Similarly there is
$t_E\colon\D\to K^b(\V)$.  The construction is functorial for 
weight-exact functors between the enhanced categories.  We choose $t$ and
$t_E$ coherently with this functoriality.  Applied to $F$, it gives a natural
isomorphism
\begin{equation}\label{eq:weight-complex-base-change}
  K^b(\Phi)\,t(M)\xrightarrow{\;\sim\;}t_E(FM)
\end{equation}
naturally in $M$.  The projection of $t$ to Bondarko's weak homotopy category
is the weak weight complex defined from weight Postnikov towers.  The
distinction between strong and weak complexes will matter in
\cref{sec:nilpotence}.

\section{The endomorphism ring of the isotropic unit}\label{sec:end-unit}

Let us recall that Morel identifies $\pi_{0,0}(\one)$ with the Grothendieck--Witt ring
$\GW(k)$ of nondegenerate symmetric bilinear forms over $k$, with orthogonal
sum as addition and tensor product as multiplication
\cite{MorelPiZero}.  This identification is valid for arbitrary fields; see
also \cite[footnote~1]{HoyoisTrace}.
The following calculation is the key input into the structure of the
weight heart.
We first isolate the quadratic trace relation.

\begin{lemma}\label{lem:quadratic-trace}
Let $R:=\pi_{0,0}(\X).$
For every nonsquare $a\in k^\times$, the image in $R$ of the Grothendieck--Witt class $1+\langle a\rangle$ is zero.
\end{lemma}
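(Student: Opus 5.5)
We prove that $1+\langle a\rangle$ maps to zero in $R$ by realizing it as the composite of a transfer and a restriction through a variety that $\X$ annihilates. Let $a\in k^\times$ be a nonsquare and put $L:=k(\sqrt a)$, a separable quadratic extension (since $\operatorname{char}k\ne2$). Set $V:=\Spec L$. This is a connected smooth $k$-variety whose only closed point has degree $2$. Hence $V$ is anisotropic modulo $2$ in the sense of \cref{def:flexible}, and \cref{lem:anisotropic-killed} gives $\X\wedge\Sigma^\infty_+V\simeq0$.

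We use the stable motivic transfer (Gysin, or Becker--Gottlieb/Hoyois trace) for the finite étale map $\pi\colon V\to\Spec k$. It provides morphisms in $\SH(k)$
\[
  \one\xrightarrow{\;\pi^!\;}\Sigma^\infty_+V\xrightarrow{\;\pi_+\;}\one,
\]
where $\pi^!$ is the transfer and $\pi_+$ is induced by the structure map. By \cite{HoyoisTrace}, the composite $\pi_+\circ\pi^!\in\pi_{0,0}(\one)=\GW(k)$ is the class of the trace form $\operatorname{Tr}_{L/k}$ of $L/k$.

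Next we compute this class. In the basis $1,\sqrt a$ the trace form has Gram matrix $\operatorname{diag}(2,2a)$, so
\[
  \operatorname{Tr}_{L/k}=\langle2\rangle+\langle2a\rangle=\langle2\rangle\bigl(1+\langle a\rangle\bigr).
\]
Since $\langle2\rangle$ is a unit in $\GW(k)$ (its square is $1$), it suffices to show that the image of $\langle2\rangle(1+\langle a\rangle)$ in $R$ vanishes.

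Now smash the composite $\pi_+\circ\pi^!$ with $\X$. The unit map $\GW(k)=\pi_{0,0}(\one)\to\pi_{0,0}(\X)=R$ sends $x$ to $\X\wedge x$, viewed as an endomorphism of $\X$ in $\X$-modules, and $\End_{\X}(\X)\cong\pi_{0,0}(\X)$. Hence the image of $\operatorname{Tr}_{L/k}$ in $R$ is the composite
\[
  \X\to\X\wedge\Sigma^\infty_+V\to\X,
\]
which factors through the zero object and therefore vanishes. Multiplying by the unit $\langle2\rangle^{-1}$ shows that the image of $1+\langle a\rangle$ in $R$ is zero.

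The main obstacle is the correct identification of $\pi_+\circ\pi^!$ with the trace form. This requires citing the right form of the motivic transfer for finite separable extensions, valid in characteristic $\ne2$ and not only in characteristic zero, with its normalization: whether the answer is the trace form itself or a twist of it by a unit. A twist by a unit is harmless, because the conclusion only needs the class to be a unit multiple of $1+\langle a\rangle$. If a cleaner reference is wanted, one can instead use Morel's computation that the Gysin map of $\Spec L\to\Spec k$ on $\pi_{0,0}$ is the Scharlau transfer $\operatorname{Tr}_*\colon\GW(L)\to\GW(k)$, applied to $1\in\GW(L)$. Either way, the factorization through $\X\wedge\Sigma^\infty_+V\simeq0$ is formal once that identification is in place.
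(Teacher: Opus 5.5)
Your proposal is correct and follows essentially the same route as the paper. The paper takes $L=k(\sqrt a)$, kills $\X\wedge\Sigma^\infty_+\Spec L$ via \cref{lem:anisotropic-killed}, identifies the Euler characteristic with the trace form $\langle2\rangle(1+\langle a\rangle)$ using Hoyois's formula, and cancels the unit $\langle2\rangle$. The only difference is phrasing: the paper says $\X\wedge-$ preserves categorical traces, whereas you factor the same class as transfer followed by projection through $\X\wedge\Sigma^\infty_+V\simeq0$.
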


\begin{proof}
Let $L=k(\sqrt a)$.  Since $a$ is a nonsquare and
$\operatorname{char}(k)\ne2$, the scheme $\Spec L$ is connected finite
\'etale of degree $2$.  Its unique closed point has even degree, so it is
anisotropic mod $2$.  By Lemma \ref{lem:anisotropic-killed},
$\X\wedge\Sigma^\infty_+\Spec L\simeq0.$ The suspension spectrum of a finite \'etale scheme is strongly dualizable.
The symmetric monoidal functor $\X\wedge-$ preserves categorical traces;
therefore the image in $R=\End_{\X\text{-}\Mod}(\X)$ of the motivic Euler
characteristic $\chi(\Spec L)$ is zero.

By Hoyois's quadratic trace formula
\cite[Theorem~1.9]{HoyoisTrace}, in the case of zero-dimensional $L-$vector space $V=0$,
$\chi(\Spec L)$ is the Scharlau transfer of
$\langle1\rangle$, equivalently the trace form
\[
  (x,y)\longmapsto\operatorname{Tr}_{L/k}(xy).
\]
With respect to the basis $1,\sqrt a$, the matrix of this form is
$\operatorname{diag}(2,2a)$.  Hence
\[
  \chi(\Spec L)=\langle2\rangle+\langle2a\rangle
  =\langle2\rangle(1+\langle a\rangle)
  \quad\text{in }\GW(k).
\]
The class $\langle2\rangle$ is a multiplicative unit, with inverse
$\langle1/2\rangle$.  Its image in $R$ is therefore a unit, so the vanishing
of $\chi(\Spec L)$ in $R$ implies
$1+\langle a\rangle=0$.
\end{proof}

\begin{lemma}\label{lem:three-nonsquares}
Each of $t_1,t_2,t_1t_2\in k^\times$ is a nonsquare.
\end{lemma}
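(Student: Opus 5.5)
We must show that $t_1$, $t_2$ and $t_1t_2$ are not squares in $k=k_0(t_1,t_2,\ldots)$. The plan is to use a discrete valuation for $t_1$ and $t_2$, and unique factorization for the product.

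First take $t_1$ or $t_2$. For each variable $t_i$ consider the $t_i$-adic valuation $v_i$ on $k$. This is the valuation attached to the prime element $t_i$ of the polynomial ring $k_0[t_1,t_2,\ldots]$. That ring is a UFD, being a filtered union of the UFDs $k_0[t_1,\ldots,t_n]$ with primes remaining prime. Every nonzero element of $k$ lies in some finitely generated subfield $k_0(t_1,\ldots,t_n)$. There $v_i$ is the usual order of vanishing along $t_i$, extended multiplicatively to quotients. If $x^2=t_i$, then $2v_i(x)=v_i(t_i)=1$, which is impossible in $\mathbb Z$.

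Next take the product. If $x^2=t_1t_2$, apply $v_1$ to get $2v_1(x)=1$, the same contradiction. So no valuation argument beyond a single prime is needed.

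The only point requiring care is that $v_i$ is well defined on the infinitely generated field. This is handled by reducing to a finite subfield $k_0(t_1,\ldots,t_n)$ containing $x$, with $n\ge 2$. The valuation computed there is independent of the choice of $n$, because $t_i$ stays prime in each larger polynomial ring. The hypothesis $\operatorname{char}(k_0)\ne2$ plays no role in this lemma.

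Equivalently, one can write $x=f/g$ with coprime polynomials $f,g$ in finitely many variables. Then $x^2=t_1t_2$ gives $f^2=t_1t_2g^2$. Comparing the exponent of the prime $t_1$ on both sides yields an even number equal to an odd number, a contradiction. I expect no real obstacle here.
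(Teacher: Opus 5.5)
Your proof is correct and follows essentially the same route as the paper: you pass to a finitely generated subfield $k_0(t_1,\ldots,t_n)$ containing a putative square root, and use the discrete valuation attached to the prime divisor $t_i=0$, which takes even values on squares but the value $1$ on $t_1$, $t_2$, and $t_1t_2$. The paper also records $v_{t_2}(t_1t_2)=1$, but as you observe, a single valuation already suffices for the product.
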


\begin{proof}
If one of the three displayed elements were a square, a chosen square root
would belong to a rational function field in finitely many of the variables.
On that finite-variable subfield, the discrete valuation associated with the
prime divisor $t_i=0$ assigns an even value to every square.  On the other
hand,
\[
  v_{t_1}(t_1)=1,\qquad
  v_{t_2}(t_2)=1,\qquad
  v_{t_1}(t_1t_2)=v_{t_2}(t_1t_2)=1.
\]
Hence $t_1$, $t_2$, and $t_1t_2$ cannot be squares in $k$.
\end{proof}

\begin{proposition}\label{prop:end-unit}
The unit homomorphism $\mathbb Z\to\pi_{0,0}(\X)$ factors through an
isomorphism
\[
  \F\xrightarrow{\,\sim\,}\pi_{0,0}(\X).
\]
Consequently,
\[
  \End_{\C}(\X)\cong\pi_{0,0}(\X)\cong\F,
\]
and, for every $q\in\mathbb Z$, the map
\[
  \End_{\C}(T^q)\longrightarrow\End_{\V}(U^q)
\]
induced by $\Phi$ is an isomorphism.
\end{proposition}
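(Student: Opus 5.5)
The argument has three stages: show that $2=0$ in $R:=\pi_{0,0}(\X)$ using \cref{lem:quadratic-trace} and \cref{lem:three-nonsquares}; show that $\GW(k)\to R$ is surjective; then identify $R$ with $\F$ by mapping to $E$.

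\textbf{Step 1: $2=0$ in $R$.} Write $u_a$ for the image of $\langle a\rangle$ in $R$. \Cref{lem:quadratic-trace} applied to $t_1$, $t_2$ and $t_1t_2$ gives
\[
  u_{t_1}=-1,\qquad u_{t_2}=-1,\qquad u_{t_1t_2}=-1 .
\]
Multiplicativity of $\langle-\rangle$ in $\GW(k)$ gives $u_{t_1t_2}=u_{t_1}u_{t_2}=(-1)^2=1$. Comparing the two values of $u_{t_1t_2}$ yields $1=-1$, so $2=0$ in $R$ and the unit map factors through $\F$.

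\textbf{Step 2: surjectivity of $\GW(k)\to R$.} Since $\pi_{0,0}(\one)\cong\GW(k)$ by Morel, it suffices to show that $\pi_{0,0}(\one)\to\pi_{0,0}(\X)$ is onto. From the cofiber sequence \eqref{eq:isotropic-cofiber}, the obstruction is the group $\pi_{-1,0}(\Sigma^\infty_+\check C(Q))$. This group vanishes because $\Sigma^\infty_+\check C(Q)$ is connective (\cref{lem:presheaf-connective}) and $-1-0<0$. Hence every element of $R$ is the image of a sum $\sum\pm\langle a_i\rangle$.

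\textbf{Step 3: reduction to $\F$ and identification with $\F$.} It remains to show that each $u_a$ equals $0$ or $1$ in $R$, modulo $2$. If $a$ is a square, then $u_a=1$. If $a$ is a nonsquare, then $u_a=-1=1$ by \cref{lem:quadratic-trace} and Step~1. So $R$ is spanned by $1$ and the unit map $\F\to R$ is surjective.

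For injectivity, the composite $\F\to R\to\pi_{0,0}(E)\cong\F$ (\cref{thm:tanania-input}(i)) is the unit of the ring $\pi_{0,0}(E)$. This ring is nonzero, so $1\neq0$ in $R$ and the unit map is an isomorphism $\F\cong R$.

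\textbf{Consequences.} First, $\End_{\C}(\X)=\pi_{0,0}(\X)$ by the free--forgetful adjunction. Second, $T^q$ is invertible, so tensoring with $T^q$ is an autoequivalence and gives $\End(T^q)\cong\End(\X)\cong\F$. The map to $\End_{\V}(U^q)\cong\F$ induced by $\Phi$ is a unital ring map between copies of $\F$, hence an isomorphism.

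The only genuinely delicate point is Step~2: one must check that the relevant boundary group is the one killed by connectivity, with the correct bidegree bookkeeping. That bookkeeping is $[\one,\Sigma^{1,0}\Sigma^\infty_+\check C(Q)]=\pi_{-1,0}(\Sigma^\infty_+\check C(Q))=0$.
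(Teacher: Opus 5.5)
Your proposal is correct and follows essentially the same route as the paper: connectivity of $\Sigma^\infty_+\check C(Q)$ kills $\pi_{-1,0}$ and makes $\GW(k)\to\pi_{0,0}(\X)$ surjective, the nonsquares $t_1,t_2,t_1t_2$ together with multiplicativity of $\langle-\rangle$ force $2=0$ and send every rank-one form to $1$, and the unital map to $\pi_{0,0}(E)\cong\F$ gives injectivity. Deriving $2=0$ before surjectivity is only a reordering, and the statement about $\End_{\C}(T^q)\to\End_{\V}(U^q)$ is obtained exactly as in the paper, by tensoring with the invertible object $T^q$.
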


\begin{proof}
The free--forgetful adjunction for $\X$-modules identifies
$\End_{\C}(\X)$ with $R$.  By Lemma \ref{lem:presheaf-connective}, $\pi_{-1,0}(\Sigma^\infty_+\check C(Q))=0$.  The long exact sequence of
\eqref{eq:isotropic-cofiber} therefore gives a surjection
\begin{equation}\label{eq:GW-surjection}
  \pi_{0,0}(\one)\twoheadrightarrow R.
\end{equation}
  Thus
\eqref{eq:GW-surjection} becomes a surjective ring homomorphism
$\GW(k)\twoheadrightarrow R$.

By Lemmas \ref{lem:quadratic-trace} and \ref{lem:three-nonsquares},
\[
  \langle t_1\rangle=\langle t_2\rangle
  =\langle t_1t_2\rangle=-1
  \quad\text{in }R.
\]
  On the other hand, multiplication of one-dimensional
forms gives
\[
  \langle t_1t_2\rangle
  =\langle t_1\rangle\langle t_2\rangle=(-1)^2=1.
\]
Hence $2=0$ in $R$.

We claim that every one-dimensional form has image $1$ in $R$.  If
$u\in k^\times$ is a square, then $\langle u\rangle=1$ already in
$\GW(k)$.  If $u$ is a nonsquare, Lemma \ref{lem:quadratic-trace} gives
$\langle u\rangle=-1=1$ in $R$.  Since every nondegenerate symmetric
bilinear form over a field of characteristic different from $2$ is
orthogonally diagonalizable, the map $\GW(k)\to R$ depends only on the rank
modulo $2$.  Since $\GW(k)$ is the group completion of the monoid of such
forms, this determines the map on all of $\GW(k)$.  It factors as a
surjection
\[
  \GW(k)\xrightarrow{\ \rk\bmod2\ }\F\twoheadrightarrow R.
\]

The structural map $\X\to E$ induces a unital ring map
\[
  R\longrightarrow\pi_{0,0}(E)=\F
\]
by \cref{thm:tanania-input}.  The composite
$\F\twoheadrightarrow R\to\F$ is the unique unital endomorphism of $\F$,
so it is the identity.  Hence $\F\to R$ is both surjective and injective.

Finally, tensoring with the invertible object $T^q$ identifies
$\End_{\C}(T^q)$ with $\End_{\C}(\X)=R$.  Similarly,
$\End_{\V}(U^q)=\F$.  Under these identifications, the map induced by
$\Phi$ is the unital isomorphism $R\to\F$ just obtained.
\end{proof}

\begin{corollary}
\label{cor:F2-linear}
The additive category $\C$ is canonically $\F$-linear.  In particular,
every morphism group in $\C$ has exponent $2$.
\end{corollary}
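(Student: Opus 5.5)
The plan is to exploit that $\C$ is an additive category in which, by \cref{prop:end-unit}, the endomorphism ring of the unit is $\F$, and that composition is biadditive. Every Hom group $\Hom_{\C}(M,N)$ is naturally a module over $\End_{\C}(\X)$. Concretely, an element $r\in R=\pi_{0,0}(\X)$ acts on $M$ by the endomorphism $r\wedge_{\X}\id_M$, after identifying $M\simeq\X\wedge_{\X}M$.

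First, observe that this action is compatible with composition on both sides: $g\circ(r\cdot f)=r\cdot(g\circ f)=(r\cdot g)\circ f$. This is the naturality of the unit isomorphism together with functoriality of $\wedge_{\X}$ in each variable. The action is also unital and additive in $r$, since $-\wedge_{\X}\id_M$ is an additive functor $\End(\X)\to\End(M)$. Hence every Hom group is an $R$-module and composition is $R$-bilinear, so $\C$ is $R$-linear.

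Second, substitute $R\cong\F$ from \cref{prop:end-unit}. This makes $\C$ $\F$-linear. Canonicity holds because the $\F$-structure is induced by the unit map $\mathbb Z\to R$, which is forced, and no choices are involved.

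Third, for the exponent claim, the $\F$-linearity is needed in the specific form "$2\cdot f=0$". Write $2\cdot\id_{\X}=0$ in $R$, which is exactly the relation $2=0$ in $R$ proved in \cref{prop:end-unit}. The additive endomorphism $2\,\id_M$ equals $(2\,\id_{\X})\wedge_{\X}\id_M$, because $-\wedge_{\X}\id_M$ is additive on Hom groups, being an exact functor of triangulated categories. Therefore $2\,\id_M=0$, and for any $f\colon M\to N$ we get $2f=f\circ 2\,\id_M=0$.

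The only point needing care is that the abstract $R$-action really agrees with the additive group structure, i.e.\ that $n\cdot\id_{\X}$ acts as $n\cdot(-)$. This follows from additivity of the functor $-\wedge_{\X}\id_M$, so I expect no genuine obstacle.
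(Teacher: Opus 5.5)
Your proposal is correct and follows the paper's argument: $\End(\X)\cong\F$ acts centrally on every Hom group via $r\wedge_{\X}\id_M$, so $2\,\id_{\X}=0$ forces $2\,\id_M=0$ and hence $2f=0$. You just spell out the bilinearity and additivity checks that the paper's proof leaves implicit.
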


\begin{proof}
In any symmetric monoidal additive category, $\End(\one)$ acts centrally on
every morphism group.  By Proposition \ref{prop:end-unit}, the scalar $2\id_{\X}$ is
zero.  Its action on an $\X$-module $M$ is $2\id_M$, so $2\id_M=0$ for every
$M\in\C$ and hence $2f=0$ for every morphism $f$.
\end{proof}

We got a rather amusing result:
the three explicit nonsquares sharpen the quadratic-transfer conclusion in
this argument; compare \cite[Remark~3.6]{TananiaCellular}.  A single quadratic
transfer only produces the relation $1+\langle a\rangle=0$.  The multiplicative identity
$\langle t_1t_2\rangle=\langle t_1\rangle\langle t_2\rangle$ converts the
three relations into $2=0$, which then collapses the whole image of
$\GW(k)$ to rank modulo $2$.

\section{The weight heart and reduction to \texorpdfstring{$E$}{E}}
\label{sec:heart}

Although $\Hh$ is defined as a Karoubi closure, it contains no exotic
retracts.  The proof begins with a finite matrix calculation.

Let
\[
  Q_0=\bigoplus_{i=1}^{N}T^{q_i},
  \qquad N\ge1,
\]
and let
\[
  s(Q_0):=\#\{q_i\mid 1\le i\le N\}
\]
be the number of distinct Tate degrees, 
occurring in $Q_0$ (multiplicities are ignored).  Put
\[
  J_{Q_0}:=\ker\!\left(
  \End_{\C}(Q_0)\longrightarrow\End_{\V}(\Phi Q_0)
  \right).
\]

\begin{lemma}\label{lem:matrix-kernel}
One has
\[
  J_{Q_0}^{s(Q_0)}=0.
\]
\end{lemma}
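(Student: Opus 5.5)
The plan is to write endomorphisms of $Q_0$ as matrices and show that a product of $s=s(Q_0)$ elements of $J_{Q_0}$ vanishes entry by entry. Additivity identifies $\End_{\C}(Q_0)$ with matrices $(f_{ji})$, where
\[
  f_{ji}\in\Hom_{\C}(T^{q_i},T^{q_j}).
\]
In the same way, $\End_{\V}(\Phi Q_0)$ is identified with matrices whose entries lie in $\Hom_{\V}(U^{q_i},U^{q_j})$, and $\Phi$ acts entrywise. By \cref{thm:tanania-input} and \cref{prop:diagonal-weight},
\[
  \Hom_{\V}(U^a,U^b)\cong\pi_{a-b,a-b}(E),
\]
which is $\F$ when $a=b$ and $0$ otherwise. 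By \cref{prop:end-unit}, $\Phi$ restricts to an isomorphism $\End_{\C}(T^a)\to\End_{\V}(U^a)$. Consequently, a matrix $f$ lies in $J_{Q_0}$ exactly when every entry $f_{ji}$ with $q_i=q_j$ is zero. Entries between distinct Tate degrees are unconstrained, but each of them is sent to zero by $\Phi$.

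Next, take $f^{(1)},\dots,f^{(s)}\in J_{Q_0}$. Each entry of the product $f^{(s)}\circ\cdots\circ f^{(1)}$ is a sum over index chains $i_0,i_1,\dots,i_s$ of composites
\[
  f^{(s)}_{i_si_{s-1}}\circ\cdots\circ f^{(1)}_{i_1i_0}.
\]
A term can only be nonzero if consecutive Tate degrees differ, that is, $q_{i_{r-1}}\ne q_{i_r}$ for all $r$. The chain visits $s+1$ Tate degrees, but only $s$ distinct values occur, so by pigeonhole some degree repeats: $q_{i_r}=q_{i_t}=a$ for some $r<t$.

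The key step is to show that such a term vanishes. Consider the partial composite
\[
  g:=f^{(t)}_{i_ti_{t-1}}\circ\cdots\circ f^{(r+1)}_{i_{r+1}i_r}\in\End_{\C}(T^a).
\]
Since $t>r$ and consecutive degrees differ, the factor $f^{(r+1)}_{i_{r+1}i_r}$ maps $T^a$ to $T^{b}$ with $b\neq a$. Its image under $\Phi$ lies in $\Hom_{\V}(U^a,U^b)=0$. Because $\Phi$ is a functor, $\Phi(g)=0$, and injectivity of $\Phi$ on $\End_{\C}(T^a)$ (\cref{prop:end-unit}) then gives $g=0$. Hence every term in the sum vanishes, and therefore $J_{Q_0}^{s}=0$.

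I expect no serious obstacle. The only point needing care is that the hom groups between distinct Tate degrees, $\pi_{c,c}(\X)$ with $c\ne0$, are not known to vanish: connectivity controls only $a-b<0$. The argument avoids computing these groups and uses only that every closed loop of such maps factors through $\End(T^a)\cong\F$, where $\Phi$ is faithful.
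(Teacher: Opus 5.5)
Your proof is correct and follows the paper's argument: expand a product of $s$ elements of $J_{Q_0}$ into path terms, use pigeonhole on the $s+1$ Tate labels to find a closed subcomposite in $\End(T^a)$, and kill it by the injectivity of $\Phi$ on $\End(T^a)\cong\F$ from \cref{prop:end-unit}. Your preliminary remark that equal-degree entries of elements of $J_{Q_0}$ vanish outright is a harmless refinement; the paper only uses that every entry has zero image under $\Phi$.
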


\begin{proof}
Write an endomorphism of $Q_0$ as an $N\times N$ matrix whose $(j,i)$ entry
is a morphism $T^{q_i}\to T^{q_j}$.  If an endomorphism belongs to
$J_{Q_0}$, then every matrix entry has zero image under $\Phi$, because its
whole reduced matrix is zero and reduction preserves the biproduct
projections and inclusions.

Set $s=s(Q_0)$ and consider a product of $s$ elements of $J_{Q_0}$.  Each matrix entry of the
product is a sum of composites indexed by paths
\[
  i_0\longrightarrow i_1\longrightarrow\cdots\longrightarrow i_s
\]
through the summands of $Q_0$.  Among the $s+1$ labels
$q_{i_0},\ldots,q_{i_s}$, two coincide; say $q_{i_r}=q_{i_t}$ with
$r<t$.  If $x_j\colon T^{q_{i_{j-1}}}\to T^{q_{i_j}}$ denotes the matrix
entry chosen at the $j$th step, the corresponding nonempty subcomposite is
\[
  x_t\cdots x_{r+1}\colon
  T^{q_{i_r}}\longrightarrow T^{q_{i_t}}=T^{q_{i_r}}.
\]
It is therefore an element of $\End_{\C}(T^{q_{i_r}})$.  Its image under
$\Phi$ is zero, since all the
factors along the subpath have zero reduced matrix entries.  By Proposition
\ref{prop:end-unit}, $\Phi$ is injective on this endomorphism ring, so the
subcomposite is zero.  Every path term therefore vanishes, proving
$J_{Q_0}^{s}=0$.
\end{proof}

We shall use the following elementary ring-theoretic form of idempotent
lifting.

\begin{lemma}
\label{lem:idempotent-conjugacy}
Let $A$ be a ring, $J\subseteq A$ a nilpotent two-sided ideal, and let
$e,e_0\in A$ be idempotents with $e-e_0\in J$.  Then $e$ and $e_0$ are
conjugate by a unit of $1+J$.
\end{lemma}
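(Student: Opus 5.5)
The plan is to use the classical explicit conjugator. Put $x:=e-e_0\in J$ and
\[
  u:=e e_0+(1-e)(1-e_0)\in A .
\]
Two facts about $u$ carry the proof. First, $u$ intertwines the idempotents:
\[
  e u=e e_0=u e_0 .
\]
Indeed, $e(1-e)=0$ gives $eu=ee_0$, and $e_0(1-e_0)=0$ gives $ue_0=ee_0$. Second, $u\in 1+J$. Expanding gives
\[
  u=2ee_0-e-e_0+1 .
\]
Rewrite this using $e=e_0+x$, so that $ee_0=e_0+xe_0$. Then
\[
  u=1+2xe_0-x .
\]
Since $J$ is a two-sided ideal, $2xe_0-x\in J$.

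Next, $u$ is a unit. Write $u=1+y$ with $y\in J$. Then $y$ is nilpotent, say $y^m=0$, because $J^m=0$ for some $m$. Hence $u$ has inverse $\sum_{i<m}(-y)^i$, and this inverse again lies in $1+J$. From $eu=ue_0$ we obtain $e=ue_0u^{-1}$, which is the required conjugacy by a unit of $1+J$.

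The only step needing care is the algebraic identity $u-1\in J$. This step uses only idempotency and the ideal property, not commutativity, so I would verify it by direct expansion as above. Nilpotence of $J$ enters solely to invert $1+y$ by a finite geometric series. One-sided nilness of $y$ would already suffice, but nilpotence of the ideal is what is given.
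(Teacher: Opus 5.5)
Your proof is correct and is essentially the paper's argument. The paper uses the mirror-image conjugator $v=e_0e+(1-e_0)(1-e)$ with $e_0v=ve$, and checks $v\equiv 1 \pmod J$ by reducing modulo $J$. You instead verify $u-1\in J$ by explicit expansion, and you also record that $u^{-1}\in 1+J$.
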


\begin{proof}
Set
\[
  v:=e_0e+(1-e_0)(1-e).
\]
A direct calculation gives $e_0v=ve$.  Modulo $J$ one has $e=e_0$, and hence
\[
  v\equiv e_0^2+(1-e_0)^2=1\pmod J.
\]
Thus $v=1+j$ for some nilpotent element $j\in J$. Therefore
$e_0=vev^{-1}$.
\end{proof}

\begin{proposition}
\label{prop:no-exotic-retracts}
Every object of $\Hh$ is isomorphic to a finite direct sum of diagonal
spheres.  Equivalently,
\[
  \Hh=\Add\{T^q\mid q\in\mathbb Z\}
\]
up to equivalence of additive categories.  Moreover, $\Hh$ is
Krull--Schmidt: the objects $T^q$ are pairwise nonisomorphic and are precisely
its indecomposable objects, and the multiplicity of every $T^q$ is unique.
\end{proposition}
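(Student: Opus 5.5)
Every object $H\in\Hh=\Kar_{\C}(\B)$ is a retract of some $Q_0=\bigoplus_{i=1}^N T^{q_i}$. The plan is to lift the splitting of the idempotent from $\V$, where it is explicit, back to $\C$ by means of Lemma~\ref{lem:idempotent-conjugacy}.

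First we show that $\Phi$ is full on $\B$. A morphism $T^a\to T^b$ corresponds to $\pi_{a-b,a-b}(\X)$, and this group is zero unless $a=b$. For $a<b$ the target bidegree has diagonal degree $0$ but negative Tate degree, so a priori it may be nonzero; fullness, however, only requires that $\Hom_{\V}(U^a,U^b)=\pi_{a-b,a-b}(E)$ vanish for $a\ne b$, which holds by \cref{thm:tanania-input}. Together with Proposition~\ref{prop:end-unit} for $a=b$, this shows that $\Phi\colon\End(Q_0)\to\End(\Phi Q_0)$ is surjective. Its kernel $J_{Q_0}$ is nilpotent by Lemma~\ref{lem:matrix-kernel}.

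Now let $e\in\End(Q_0)$ be an idempotent with image $H$. Then $\bar e=\Phi(e)$ is an idempotent on the graded vector space $\bigoplus_i\F(q_i)$ that preserves the grading. We choose a homogeneous basis adapted to $\bar e$, that is, a graded decomposition $\Phi Q_0=\operatorname{im}\bar e\oplus\ker\bar e$ with each part spanned by basis vectors. This gives a graded automorphism $\bar g$ with $\bar g\bar e\bar g^{-1}=\bar e_0$, where $\bar e_0$ is a diagonal coordinate projection onto some subset $S$ of the summands.

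By fullness we lift $\bar g$ to some $g$. This lift is invertible: a lift $h$ of $\bar g^{-1}$ satisfies $gh\in 1+J_{Q_0}$ and $hg\in 1+J_{Q_0}$, and both are units because $J_{Q_0}$ is nilpotent. We also lift $\bar e_0$ to the genuine coordinate projection $e_0$ of $Q_0$ onto $\bigoplus_{i\in S}T^{q_i}$. Then $geg^{-1}$ and $e_0$ are idempotents that agree modulo $J_{Q_0}$. Lemma~\ref{lem:idempotent-conjugacy} makes them conjugate, so $H=\operatorname{im}(e)\cong\operatorname{im}(e_0)=\bigoplus_{i\in S}T^{q_i}$.

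For Krull--Schmidt we use the following facts.
\begin{enumerate}[label=\textup{(\alph*)}]
\item Each $T^q$ has local endomorphism ring $\F$, so $T^q$ is indecomposable.
\item The objects $T^q$ are pairwise nonisomorphic, because $\Phi T^q=U^q$ are pairwise nonisomorphic.
\item Multiplicities are unique: the multiplicity of $T^q$ in $\bigoplus_i T^{q_i}$ equals $\dim_{\F}$ of the Tate degree $q$ part of $\Phi$ of that sum, which is an isomorphism invariant.
\end{enumerate}
Any indecomposable object of $\Hh$ is a finite sum of spheres by the first part, so it must be a single $T^q$.

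The main step to check carefully is fullness of $\Phi$ on off-diagonal entries. Without it the idempotent-lifting argument does not apply directly. It holds here because the target morphism groups in $\V$ vanish for $a\ne b$. A second point to verify is that the lifted $g$ is invertible, and the nilpotence of $J_{Q_0}$ handles this.
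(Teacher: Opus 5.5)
Your proposal is correct and follows the paper's proof: you conjugate $\Phi(e)$ to a coordinate projection by a graded change of basis, lift that change of basis to an automorphism of $Q_0$, and conclude with Lemma~\ref{lem:idempotent-conjugacy} using the nilpotence of $J_{Q_0}$ from Lemma~\ref{lem:matrix-kernel}. The paper lifts the change of basis block-diagonally, so the products are the identity on the nose, whereas you take an arbitrary lift and get invertibility from nilpotence of $J_{Q_0}$; also delete your claim that $\pi_{a-b,a-b}(\X)$ vanishes for $a\ne b$, since connectivity gives no such vanishing in diagonal degree $0$, your next sentence contradicts it, and the argument never uses it.
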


\begin{proof}
An object of $\Hh=\Kar_{\C}(\B)$ is the image of an idempotent
$e\in\End_{\C}(Q_0)$ for some finite sum
$Q_0=\bigoplus_iT^{q_i}$.  The idempotent
$\bar e=\Phi(e)$ acts degreewise on the finite-dimensional graded vector
space $\Phi(Q_0)$.  In each internal degree, choose bases of its image and
kernel.  The resulting degree-preserving change of basis conjugates
$\bar e$ to a standard diagonal projection.

Every equal-degree matrix block lifts uniquely to a matrix of maps between
copies of $T^q$, because Proposition \ref{prop:end-unit} identifies
$\End(T^q)$ with $\F$.  Lift the inverse change of basis matrix as well.
The two products are the identity in every equal-degree block by the
injectivity in Proposition \ref{prop:end-unit}; hence the two lifts are inverse
automorphisms of $Q_0$.  After conjugating $e$ by this lift, we may assume
that $e-e_0\in J_{Q_0},$
where $e_0$ is a standard diagonal idempotent.  The ideal $J_{Q_0}$ is
nilpotent by Lemma \ref{lem:matrix-kernel}; hence Lemma 
\ref{lem:idempotent-conjugacy} shows that $e$ and $e_0$ are conjugate.
Their images are isomorphic, and the image of $e_0$ is a finite direct sum of
some of the original diagonal summands.

Finally, $\Phi(T^q)=U^q$ shows that the $T^q$ are pairwise nonisomorphic,
while $\End(T^q)=\F$ makes each of them indecomposable.  The graded dimension
of $\Phi(H)$ records the multiplicity of every $T^q$ in a decomposition of
$H$, proving uniqueness.
\end{proof}

\begin{corollary}\label{cor:heart-functor}
The functor $\Phi\colon\Hh\to\V$ is full and conservative, reflects the
multiplicity of each $T^q$, and has nilpotent kernel on the endomorphism ring
of every object of $\Hh$.
\end{corollary}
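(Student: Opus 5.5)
By \cref{prop:no-exotic-retracts}, we may assume every object of $\Hh$ is a finite sum $H=\bigoplus_{i=1}^N T^{q_i}$. Morphisms between two such objects are then matrices whose $(j,i)$ entries lie in $\Hom_{\C}(T^{q_i},T^{q_j})$. The plan is to read off each of the four properties of $\Phi$ from this matrix description, together with \cref{prop:end-unit} and \cref{lem:matrix-kernel}.

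\emph{Fullness.} Let $H=\bigoplus_i T^{q_i}$ and $H'=\bigoplus_j T^{q'_j}$, and let $g\colon\Phi H\to\Phi H'$ be a map in $\V$. Under $\pi_{**}$ this is a degree-preserving linear map of graded $\F$-vector spaces. Its $(j,i)$ entry is therefore zero unless $q_i=q'_j$, and lies in $\End_{\V}(U^{q})=\F$ when $q_i=q'_j=q$. By \cref{prop:end-unit}, each such entry lifts to $\End_{\C}(T^q)$. We lift the zero entries by zero. The resulting matrix of maps in $\C$ is a lift of $g$, because $\Phi$ is additive and compatible with the biproduct inclusions and projections.

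\emph{Reflection of multiplicities.} The multiplicity of $T^q$ in $H$ equals $\dim_{\F}$ of the Tate-degree-$q$ part of $\Phi H$. This was already observed at the end of the proof of \cref{prop:no-exotic-retracts}.

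\emph{Nilpotent kernel.} For $H=Q_0$, the kernel of $\End(H)\to\End(\Phi H)$ is exactly $J_{Q_0}$. \Cref{lem:matrix-kernel} gives $J_{Q_0}^{s(Q_0)}=0$. For a general object of $\Hh$, we transport this along an isomorphism with such a sum.

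\emph{Conservativity.} This is the only step needing a small argument. Let $f\colon H\to H'$ be a map in $\Hh$ with $\Phi(f)$ an isomorphism. By fullness, choose $g\colon H'\to H$ with $\Phi(g)=\Phi(f)^{-1}$. Then $1-gf$ lies in the kernel on $\End(H)$, so it is nilpotent by the previous step. Hence $gf=1-(1-gf)$ is a unit, with inverse given by the finite geometric series. The same argument applied to $1-fg$ shows $fg$ is a unit in $\End(H')$. So $f$ has both a left and a right inverse, and is therefore an isomorphism.

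The main point to check carefully is the compatibility of $\Phi$ with the matrix decomposition, so that fullness holds entrywise. Once that is in place, every step is formal.
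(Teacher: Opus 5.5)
Your proposal is correct and follows essentially the same route as the paper. Both prove fullness by lifting equal-degree matrix entries via \cref{prop:end-unit}, read multiplicities off the graded dimension of $\Phi H$, obtain nilpotence from \cref{lem:matrix-kernel}, and get conservativity by lifting $\Phi(f)^{-1}$ and inverting $gf$ and $fg$ modulo the nilpotent kernel.
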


\begin{proof}
By Proposition \ref{prop:no-exotic-retracts}, write source and target objects as finite
direct sums of the $T^q$.  In $\V$, maps between different internal degrees
vanish, while maps between equal-degree summands are matrices over $\F$.
Every such matrix lifts entrywise by Proposition \ref{prop:end-unit}, proving fullness.
The reduced graded vector space records exactly the number of summands of
each $T^q$, proving multiplicity reflection and object conservativity.

For conservativity on morphisms, let $f\colon H\to H'$ have invertible image.
By fullness, lift the inverse of $\Phi(f)$ to a morphism
$g\colon H'\to H$.  Then $gf-1_H$ and $fg-1_{H'}$ belong to nilpotent
endomorphism kernels, by Lemma \ref{lem:matrix-kernel}.  Hence $gf$ and $fg$ are
invertible .  Thus
\[
  (gf)^{-1}g
  \quad\text{and}\quad
  g(fg)^{-1}
\]
are respectively a left and a right inverse of $f$; they coincide, so $f$ is
invertible.  The same matrix kernel lemma gives the final nilpotence
assertion.
\end{proof}

\begin{remark}
The category $\V$ is a local semisimple symmetric monoidal abelian category:
under its graded tensor product, the tensor product of two nonzero
finite-dimensional graded $\F$-vector spaces is nonzero.  Thus Propositions 
\ref{prop:diagonal-weight}, \ref{prop:no-exotic-retracts} and Corollary \ref{cor:heart-functor} verify
the hypotheses of \cite[Theorem~1.1]{BondarkoTabuadaPicard}.
\end{remark}

\section{Minimal weight complexes}\label{sec:minimal-complexes}

The preceding description of the heart permits a familiar minimal complex
argument, analogous to cancellation over a local or semisimple residue
category.

\begin{definition}\label{def:Phi-minimal}
A bounded complex $C^\bullet=(C^i,d^i)$ in $\Hh$ is
\emph{$\Phi$-minimal} if
\[
  \Phi(d^i)=0
  \quad\text{for every }i.
\]
Equivalently, $K^b(\Phi)(C^\bullet)$ is a complex with zero differential.
\end{definition}

\begin{lemma}\label{lem:cancellation}
Every bounded complex in $\Hh$ is homotopy equivalent to a
$\Phi$-minimal bounded complex.  The procedure terminates after finitely many
steps and never enlarges the cohomological support.
\end{lemma}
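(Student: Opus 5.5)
We use the standard Gaussian-elimination (cancellation) argument for complexes over an additive category, and measure progress by the total number of diagonal-sphere summands. By Proposition~\ref{prop:no-exotic-retracts}, every term $C^i$ of a bounded complex $C^\bullet$ in $\Hh$ may be written as a finite direct sum of copies of the $T^q$. Define the size of $C^\bullet$ as $\sum_i\dim_{\F}\Phi(C^i)$, which is a finite nonnegative integer. If $C^\bullet$ is already $\Phi$-minimal, there is nothing to prove. Otherwise some $\Phi(d^i)\neq0$.

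\textbf{Step 1: find a split component.} Since $\Phi(d^i)$ is a nonzero degree-preserving map of finite-dimensional graded $\F$-vector spaces, it has rank $r\ge1$ in some Tate degree $q$. After degree-preserving changes of basis in source and target, it contains an identity block $1_{\F(q)}$. We lift both basis changes to automorphisms of $C^i$ and $C^{i+1}$. This uses the entrywise lifting of Corollary~\ref{cor:heart-functor}, together with the fact that a lift of an invertible matrix is invertible, which follows from the conservativity in that corollary. After the lift we may write
\[
  C^i=T^q\oplus A,\qquad C^{i+1}=T^q\oplus B,
\]
and $d^i$ has $T^q\to T^q$ component $x$ with $\Phi(x)=1$. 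By Proposition~\ref{prop:end-unit}, $\End(T^q)\cong\F$, so in fact $x=1$.

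\textbf{Step 2: cancel the component.} We apply the Gaussian elimination lemma for complexes over any additive category. If a differential $d^i$ has a matrix entry that is an isomorphism $T^q\to T^q$, then $C^\bullet$ is homotopy equivalent to the complex with $A$ and $B$ in degrees $i$ and $i+1$. In that complex the new differential $B\to$ (degree $i+2$) is unchanged, the new differential $C^{i-1}\to A$ is unchanged, and the middle component $A\to B$ is $\delta-\gamma x^{-1}\beta$, where $\beta,\gamma,\delta$ denote the remaining components of $d^i$. The homotopy equivalence is given by explicit maps built from $x^{-1}$. We will check the standard identities $d^2=0$, $gf\simeq 1$ and $fg=1$ routinely. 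The new terms $A$ and $B$ lie in $\Hh$ and are direct summands of the old terms, so a degree that was zero stays zero and the support does not grow. The size drops by $2$.

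\textbf{Step 3: termination.} Since the size is a nonnegative integer that decreases strictly at each step, the procedure stops after at most $\tfrac12\operatorname{size}(C^\bullet)$ steps. The terminal complex has all $\Phi(d^i)=0$, so it is $\Phi$-minimal.

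The only point needing care is Step~1: we must realize the splitting by honest automorphisms in $\Hh$, not merely after reduction by $\Phi$. This is exactly where the absence of exotic retracts (Proposition~\ref{prop:no-exotic-retracts}) and fullness and conservativity of $\Phi$ (Corollary~\ref{cor:heart-functor}) are used. Step~2 is formal once Step~1 is in place.
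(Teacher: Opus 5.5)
Your proposal is correct and follows the paper's argument. You lift degreewise basis changes to automorphisms in $\Hh$ so that $d^i$ acquires a component $u=1_{T^q}$, cancel that component, and induct on the total number of diagonal summands. The only cosmetic difference is in the cancellation step. The paper uses elementary triangular automorphisms and the relations $d^{i}d^{i-1}=0$, $d^{i+1}d^i=0$ to split off $T^q\xrightarrow{1}T^q$ as a contractible direct summand of the complex. You instead quote the Gaussian elimination lemma, which produces the same reduced complex with middle differential $\delta-\gamma x^{-1}\beta$.
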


\begin{proof}
Use Proposition \ref{prop:no-exotic-retracts} to decompose every term into a finite
direct sum of objects $T^q$.  Suppose that $\Phi(d^i)\ne0$.  For some $q$,
the degree-$q$ block of $\Phi(d^i)$ has positive rank.  Degreewise changes of
bases in $C^i$ and $C^{i+1}$ lift to automorphisms in $\Hh$ by Proposition 
\ref{prop:end-unit}.  After such changes, the differential contains a
component
\[
  u\colon T^q\longrightarrow T^q
\]
whose image under $\Phi$ is $1$.  The endomorphism map of Proposition
\ref{prop:end-unit} is an isomorphism, so $u=1_{T^q}$ and in particular is
invertible.

Write the relevant differential in block form
\[
  d^i=
  \begin{pmatrix}
    u & a\\ b & c
  \end{pmatrix}.
\]
Multiplication by elementary triangular automorphisms of source and target
replaces it by
\[
  \begin{pmatrix}
    u & 0\\ 0 & c-bu^{-1}a
  \end{pmatrix}.
\]
The equations $d^id^{i-1}=0$ and $d^{i+1}d^i=0$ then force all components of
the adjacent differentials entering the source of $u$ or leaving its target
to vanish.  Thus the whole complex splits as a direct sum of the contractible
two-term complex
\[
  T^q\xrightarrow{\,u\,}T^q
\]
and a bounded complex having two fewer diagonal summands.  Removing this
contractible summand strictly decreases the total number of summands in all
terms.  Repetition therefore terminates, and at termination every reduced
differential is zero.
\end{proof}

\begin{proposition}
\label{prop:minimal-uniqueness}
Two $\Phi$-minimal bounded complexes in $\Hh$ that are homotopy equivalent
are isomorphic as complexes.  Consequently a $\Phi$-minimal representative
of $t(M)$ is unique up to (noncanonical) chain isomorphism.
\end{proposition}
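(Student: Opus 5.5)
Let $f\colon C^\bullet\to D^\bullet$ be a homotopy equivalence between $\Phi$-minimal bounded complexes in $\Hh$, with homotopy inverse $g$, and homotopies $gf-1=hd+dh$, $fg-1=h'd+dh'$. The plan is to show that each component $f^i\colon C^i\to D^i$ is already an isomorphism in $\Hh$. Then $f$ is a chain map that is termwise invertible, and its termwise inverse is automatically a chain map, hence a chain isomorphism. The second assertion then follows at once. Indeed, by Lemma~\ref{lem:cancellation} the complex $t(M)$ admits a $\Phi$-minimal representative, and any two such representatives are homotopy equivalent in $K^b(\Hh)$.

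To show $f^i$ is invertible, apply $\Phi$ to the homotopy identity in degree $i$:
\[
  \Phi(g^i)\Phi(f^i)-1=\Phi(h^{i+1})\Phi(d^i)+\Phi(d^{i-1})\Phi(h^i).
\]
Both differential terms vanish by $\Phi$-minimality of $C^\bullet$. Hence $\Phi(g^if^i)=1$, and symmetrically $\Phi(f^ig^i)=1$ using minimality of $D^\bullet$. Thus $\Phi(f^i)$ is an isomorphism in $\V$. Corollary~\ref{cor:heart-functor} says $\Phi$ is conservative on morphisms of $\Hh$, so $f^i$ is an isomorphism.

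Concretely, this is the nilpotent-kernel argument. The elements $g^if^i-1$ and $f^ig^i-1$ lie in the kernels of $\End(C^i)\to\End(\Phi C^i)$ and $\End(D^i)\to\End(\Phi D^i)$, which are nilpotent by Lemma~\ref{lem:matrix-kernel} after decomposing via Proposition~\ref{prop:no-exotic-retracts}. So $g^if^i$ and $f^ig^i$ are units, and $f^i$ has both a left and a right inverse.

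I expect no serious obstacle. The only point needing care is that the homotopy relation involves the differentials of both complexes: in degree $i$, $gf-1$ on $C^i$ involves only $d_C$, and $fg-1$ on $D^i$ involves only $d_D$. Minimality of each complex is therefore used exactly where it is needed. The word \emph{noncanonical} in the statement reflects that the chain isomorphism depends on the chosen homotopy equivalence.
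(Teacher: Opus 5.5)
Your proposal is correct and follows the paper's proof. In both, applying $\Phi$ to the homotopy-equivalence identities kills the differential terms by $\Phi$-minimality, so each $\Phi(f^i)$ is invertible; conservativity of $\Phi$ on $\Hh$ (Corollary~\ref{cor:heart-functor}) then makes each $f^i$ invertible, and a termwise invertible chain map is a chain isomorphism. Your nilpotent-kernel unpacking via Lemma~\ref{lem:matrix-kernel} is just that corollary's conservativity proof written out.
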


\begin{proof}
Let $f\colon C^\bullet\to D^\bullet$ be a chain homotopy equivalence between
$\Phi$-minimal complexes.  After applying $\Phi$, both differentials are
zero.  Hence the equations asserting that $\Phi(f)$ has a homotopy inverse
reduce to the statement that every $\Phi(f^i)$ is an isomorphism.  The heart
functor is conservative by Corollary  \ref{cor:heart-functor}, so every $f^i$ is an
isomorphism.  A termwise invertible chain map is a chain isomorphism. 
\end{proof}

We also need a uniform nilpotence statement for maps of a minimal complex.

\begin{lemma}
\label{lem:minimal-complex-kernel}
Let $C^\bullet$ be a nonzero $\Phi$-minimal bounded complex and put
\[
  d(C^\bullet):=\max_i\#\{q\mid T^q\text{ occurs in }C^i\}.
\]
Then the
ideal
\[
  R_{C^\bullet}:=
  \ker\!\left(
  \End_{K^b(\Hh)}(C^\bullet)
  \longrightarrow
  \End_{K^b(\V)}(K^b(\Phi)C^\bullet)
  \right)
\]
satisfies
\[
  R_{C^\bullet}^{\,d(C^\bullet)}=0.
\]
\end{lemma}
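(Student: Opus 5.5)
The plan is to reduce the statement termwise to \cref{lem:matrix-kernel}, using that $\Phi$-minimality kills all homotopies after reduction. First, by \cref{prop:no-exotic-retracts} we may fix, for each $i$, an isomorphism $C^i\cong\bigoplus_j T^{q_{ij}}$. Transporting endomorphisms along it identifies $\ker\bigl(\End_{\Hh}(C^i)\to\End_{\V}(\Phi C^i)\bigr)$ with $J_{Q_0}$ for $Q_0=\bigoplus_j T^{q_{ij}}$. Since $s(Q_0)\le d(C^\bullet)$, \cref{lem:matrix-kernel} then gives that any product of $d(C^\bullet)$ endomorphisms of $C^i$ with zero image under $\Phi$ vanishes, for each single $i$.

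Next, I describe $R_{C^\bullet}$ concretely. Let $\varphi\in R_{C^\bullet}$ be represented by a chain map $f=(f^i)$. The complex $K^b(\Phi)C^\bullet$ has zero differential, so a chain map of it is null-homotopic only if it is zero: any homotopy contributes $dh+hd=0$. Hence $\Phi(f^i)=0$ for every $i$. This holds for every representative $f$ of $\varphi$, since two representatives differ by $dh+hd$, whose image under $\Phi$ vanishes because $\Phi(d)=0$. This step, showing that membership in $R_{C^\bullet}$ is a termwise condition independent of the chosen representative, is the only point requiring care. It is exactly where $\Phi$-minimality enters; without it, the reduced homotopy class would not control the individual terms $\Phi(f^i)$.

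Finally, take $\varphi_1,\dots,\varphi_d\in R_{C^\bullet}$ with $d=d(C^\bullet)$, and choose representatives $f_1,\dots,f_d$. The composite chain map $f_d\circ\cdots\circ f_1$ represents $\varphi_d\cdots\varphi_1$. Its $i$th component is $f_d^i\cdots f_1^i$, a product of $d$ elements of $\ker\bigl(\End_{\Hh}(C^i)\to\End_{\V}(\Phi C^i)\bigr)$, which vanishes by the first step. So the composite is zero already as a chain map, hence zero in $K^b(\Hh)$, and therefore $R_{C^\bullet}^{\,d(C^\bullet)}=0$.
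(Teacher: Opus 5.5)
Your proposal is correct and is essentially the paper's proof: because the reduced differential is zero, $\Phi(f^i)=0$ holds termwise for any representative chain map, and \cref{lem:matrix-kernel}, applied to each $C^i$ with at most $d(C^\bullet)$ distinct Tate degrees, kills the $d$-fold composite already as a chain map. Your separate check that this does not depend on the chosen representative is harmless but not needed: the reduction of any representative of a class in $R_{C^\bullet}$ is null-homotopic, and therefore zero.
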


\begin{proof}
Put $d=d(C^\bullet)$, and choose arbitrary
$x_1,\ldots,x_d\in R_{C^\bullet}$, represented by chain maps
$c_1,\ldots,c_d$.  The reduced differential is zero, and each reduced chain
map $\Phi(c_j)$ is null-homotopic.  The chain-homotopy equation therefore
gives $\Phi(c_j^i)=0$ for all $i,j.$
For each term $C^i$, the kernel of
\[
  \End_{\Hh}(C^i)\longrightarrow\End_{\V}(\Phi C^i)
\]
has $d^{th}$ power zero by Lemma \ref{lem:matrix-kernel}; repetitions of a fixed
$T^q$ do not affect the bound.  Thus
$(c_d\cdots c_1)^i=0$
for every $i$.  Hence $x_d\cdots x_1=0$.
\end{proof}

\section{The finite cellular \texorpdfstring{$\MBP$}{MBP}--Nakayama theorem}
\label{sec:nakayama}

We now prove the detection statements from the introduction by weight
detection.

\begin{proof}[Proof of \cref{thm:pure-nakayama}]
The functor on hearts is full and conservative by Corollary 
\ref{cor:heart-functor}.  Therefore Proposition  \ref{prop:weight-detection} applies to
the weight-exact functor $F\colon\C\to\D$.

If $E_{**}(M)=0$, then $F(M)\simeq0$ by the full faithfulness and object
classification in \cref{thm:tanania-input}.  Conservativity of $F$ gives
$M\simeq0$.

Suppose now that $E_{**}(M)$ is concentrated in diagonal degree $n$.  Since
$M$ is finite cellular and $F$ is exact and preserves retracts, $F(M)$ is a
compact cellular $E$-module.  By Lemma  \ref{lem:compact-E-modules}, it is a finite
direct sum of suspensions of $E$.  Concentration in diagonal degree $n$
forces the decomposition
\[
  F(M)\simeq
  \bigoplus_{q\in\mathbb Z}
  \bigl(U^q[n]\bigr)^{\oplus m_q},
\]
where $m_q=\dim_{\F}E_{n+q,q}(M)$ and only finitely many $m_q$ are nonzero.
The right-hand side is pure of weight $n$.  Weight detection implies that
$M$ is pure of weight $n$, so
 $M\simeq H[n]$
for some $H\in\Hh$.  Applying $F$ and cancelling the common shift yields
\[
  \Phi(H)\simeq
  \bigoplus_q(U^q)^{\oplus m_q}.
\]
By multiplicity reflection in Corollary  \ref{cor:heart-functor},
\[
  H\simeq\bigoplus_q(T^q)^{\oplus m_q}.
\]
Using \eqref{eq:suspension-convention}, we obtain
\[
  M\simeq
  \bigoplus_q\bigl(T^q[n]\bigr)^{\oplus m_q}
  =
  \bigoplus_q
  \bigl(\Sigma^{n+q,q}\X\bigr)^{\oplus m_q}.
\]
\end{proof}

\begin{proof}[Proof of Corollary \ref{cor:one-cell-nakayama}]
Apply \cref{thm:pure-nakayama}(ii) with $n=p-q$ and with the unique nonzero
multiplicity equal to $m_q=1$.
\end{proof}

\begin{proof}[Proof of Corollary \ref{cor:equivalence-detection}]
The forward implication is immediate.  Conversely, if $E_{**}(f)$ is an
isomorphism, then the cofiber of $f$ has zero $E$-homology.  It vanishes by
\cref{thm:pure-nakayama}(i), so $f$ is an isomorphism.
\end{proof}

\section{Computation of the Picard group and $K_0$}\label{sec:picard}

We first compute the Picard group after extension of scalars.

\begin{lemma}
\label{lem:E-picard}
Every tensor-invertible object of $\D$ is isomorphic to a unique bigraded
suspension of $E$.  Consequently,
\[
  \Pic(\D)\cong\mathbb Z^2,
  \qquad
  (p,q)\longmapsto[\Sigma^{p,q}E].
\]
\end{lemma}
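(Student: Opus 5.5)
The plan is to reduce everything to linear algebra via Lemma~\ref{lem:compact-E-modules}. By that lemma and \cref{thm:tanania-input}, $\pi_{**}$ identifies $\D$ with finite-dimensional bigraded $\F$-vector spaces. I will first check that this identification is symmetric monoidal: for cells, $\Sigma^{p,q}E\wedge_E\Sigma^{p',q'}E\simeq\Sigma^{p+p',q+q'}E$, and $\wedge_E$ is additive in each variable. Hence for finite sums of cells,
\[
  \pi_{**}(Y\wedge_E Z)\cong\pi_{**}(Y)\otimes_{\F}\pi_{**}(Z)
\]
as bigraded vector spaces. Since every object of $\D$ is a finite sum of cells, this covers all objects, and I only need it on objects, not functorially.

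For an invertible $P\in\D$ with inverse $P'$, let $V=\pi_{**}P$ and $V'=\pi_{**}P'$. Then $V\otimes V'\cong\pi_{**}(E)=\F$, concentrated in bidegree $(0,0)$ by \cref{thm:tanania-input}(i). Comparing total dimensions gives $\dim V\cdot\dim V'=1$, so $V$ is one-dimensional, say $V\cong\Sigma^{p,q}\F$. Lemma~\ref{lem:compact-E-modules}, or equivalently full faithfulness in \cref{thm:tanania-input}(iii), then gives $P\simeq\Sigma^{p,q}E$. The bidegree is unique because $\pi_{**}(\Sigma^{p,q}E)$ is $\F$ placed in bidegree $(p,q)$, which is an isomorphism invariant.

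Finally, the map $(p,q)\mapsto[\Sigma^{p,q}E]$ is a homomorphism because $\Sigma^{p,q}E\wedge_E\Sigma^{p',q'}E\simeq\Sigma^{p+p',q+q'}E$. It lands in $\Pic(\D)$ since $\Sigma^{-p,-q}E$ is an inverse. It is surjective by the classification above and injective by the uniqueness of the bidegree.

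The only point needing care is the monoidality of $\pi_{**}$ on objects. I expect it to be the main, though minor, obstacle, and it is handled directly by the cell decomposition and biadditivity of $\wedge_E$.
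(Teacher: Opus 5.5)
Your proposal is correct and follows essentially the same route as the paper. The paper's one-line proof cites \cref{lem:compact-E-modules} together with multiplicativity of the total $\F$-dimension of $\pi_{**}$ under $\wedge_E$, and you supply exactly that multiplicativity, via the cell decomposition and biadditivity of $\wedge_E$, before concluding as the paper does.
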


\begin{proof}
The assertion follows immediately from Lemma \ref{lem:compact-E-modules} and multiplicativity of the total $\F$-dimension of $\pi_{**}$ under tensor product.
\end{proof}

\begin{proof}[Proof of \cref{thm:main-picard}]
Let $P\in\C$ be tensor-invertible.  Since $F$ is symmetric monoidal, $F(P)$
is tensor-invertible in $\D$.  By Lemma \ref{lem:E-picard}, there is a unique
pair $(p,q)$ such that
\[
  F(P)\simeq\Sigma^{p,q}E.
\]
Equivalently,
\[
  E_{**}(P)\cong\Sigma^{p,q}\F.
\]
The one-cell Nakayama \ref{cor:one-cell-nakayama} gives
$P\simeq\Sigma^{p,q}\X.$
Thus the homomorphism $\mathbb Z^2\to\Pic(\C)$ is surjective.

For injectivity, suppose
$\Sigma^{p,q}\X\simeq\Sigma^{p',q'}\X$.  Extension of scalars gives
\[
  \Sigma^{p,q}E\simeq\Sigma^{p',q'}E.
\]
Taking $\pi_{**}$ shows $(p,q)=(p',q')$.  Therefore the displayed
homomorphism is an isomorphism.
\end{proof}

Thus extension of scalars identifies $\Pic(\C)$ with $\Pic(\D)$; under the
two identifications with $\mathbb Z^2$, it is the identity.

For an essentially small triangulated category $\mathcal T$, write
$K_0(\mathcal T)$ for the abelian group generated by isomorphism classes
$[M]$, modulo the relations $[B]=[A]+[C]$ for distinguished triangles
$A\to B\to C\to A[1]$.  An exact tensor product makes this a ring by
$[M][N]=[M\otimes N]$.

\begin{theorem}
\label{thm:K0}
There are canonical ring isomorphisms
\[
  K_0(\C)\xrightarrow{\;\sim\;}K_0(\D)
  \xrightarrow{\;\sim\;}\mathbb Z[u,u^{-1}],
\]
where $u=[T]$ on the left and $u=[U]$ on the right.  Explicitly,
\[
  [\Sigma^{p,q}\X]=(-1)^{p-q}u^q
\]
and every $M\in\C$ satisfies
\begin{equation}\label{eq:K0-Euler}
  [M]=\sum_{p,q\in\mathbb Z}
  (-1)^{p-q}\dim_{\F}E_{p,q}(M)\,u^q.
\end{equation}

\end{theorem}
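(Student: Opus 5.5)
The plan is to compute both $K_0$ groups via the bounded weight structures and their hearts, and then compare them through $F$.

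First, $\D$. By \cref{lem:compact-E-modules}, every object of $\D$ is a finite sum of cells $U^q[i]$, and $\pi_{**}$ identifies $\D$ with finite-dimensional bigraded $\F$-vector spaces. Triangles in $\D$ are split in the sense of \cite[Remark~7.5]{TananiaCellular}, so for a distinguished triangle the long exact sequence in $\pi_{**}$ gives additivity of the Euler-characteristic map
\[
  \chi_E\colon K_0(\D)\to\mathbb Z[u,u^{-1}],\qquad
  [Y]\mapsto\sum_{p,q}(-1)^{p-q}\dim_{\F}\pi_{p,q}(Y)\,u^q .
\]
Concretely, the alternating sum of dimensions along a long exact sequence vanishes. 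The only care needed is that the triangulated shift $[1]=\Sigma^{1,0}$ changes the diagonal degree by one, which is exactly why the sign $(-1)^{p-q}$ appears.

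The map $\chi_E$ is surjective, since $[U^q]\mapsto u^q$ and $[U^q[1]]\mapsto -u^q$. It is injective because $[U^q[i]]=(-1)^i[U^q]$ in $K_0(\D)$ (use the triangle $Y\to0\to Y[1]$). Hence $K_0(\D)$ is generated by the classes $[U^q]$, and their images $u^q$ are linearly independent. Multiplicativity holds because $\pi_{**}$ of a tensor product of cells is the graded tensor product, and $U^a\wedge_E U^b\simeq U^{a+b}$.

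Next, $\C$. By \cref{prop:diagonal-weight}, $w$ is bounded with heart $\Hh$, and by \cref{prop:no-exotic-retracts} we have $\Hh=\Add\{T^q\}$ and $\Hh$ is Krull--Schmidt. Bondarko's theorem for bounded weight structures identifies $K_0(\C)$ with $K_0^{\mathrm{add}}(\Hh)$. I would cite \cite{BondarkoFoundations}, or argue directly via the strong weight complex $t$, which gives $K_0(\C)\to K_0(K^b(\Hh))\cong K_0^{\mathrm{add}}(\Hh)$ with inverse induced by the inclusion $\Hh\subseteq\C$. Krull--Schmidt then gives $K_0^{\mathrm{add}}(\Hh)\cong\bigoplus_q\mathbb Z[T^q]$, free on the $[T^q]$.

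Rather than rely on the identification in full, a lighter route suffices:
\begin{enumerate}[label=\textup{(\alph*)}]
\item Surjectivity. $K_0(\C)$ is generated by the $[T^q]$, since the thick closure of the cells only adds shifts (sign $\pm$), cones, and retracts. Retracts are the one worry. Because $w$ is bounded, every object has a finite weight Postnikov tower with factors in $\Hh[n]$, so $[M]=\sum_n(-1)^n[H_n]$ with $H_n\in\Hh=\Add\{T^q\}$. Hence the $[T^q]$ generate.
\item Injectivity. The composite $K_0(\C)\xrightarrow{K_0(F)}K_0(\D)\xrightarrow{\chi_E}\mathbb Z[u^{\pm1}]$ sends $[T^q]\mapsto u^q$. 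So the generators map to a basis, both maps are isomorphisms, and $K_0(F)$ is one as well.
\end{enumerate}
The ring structure is compatible because $F$ is symmetric monoidal and $T^a\wedge_{\X}T^b\simeq T^{a+b}$.

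The explicit formulas are then immediate. From \eqref{eq:suspension-convention}, $\Sigma^{p,q}\X=T^q[p-q]$, so $[\Sigma^{p,q}\X]=(-1)^{p-q}u^q$. Formula \eqref{eq:K0-Euler} is $\chi_E([FM])$ unwound, using $E_{**}(M)=\pi_{**}(FM)$.

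The main obstacle is the additivity step on the $\D$ side, namely showing that $\chi_E$ respects all distinguished triangles of $\D$ and not just split ones. I would handle it through the long exact homotopy sequence and finite-dimensionality in each bidegree, summed over the finitely many nonzero bidegrees.
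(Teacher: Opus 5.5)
Your proof is correct, but it takes a different route from the paper's.

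\textbf{The paper's route.} It cites Sosnilo's theorem that, for a bounded weight structure, the weight complex induces $K_0(\C)\cong K_0^{\mathrm{split}}(\Hh)$, with the inverse induced by the inclusion of the heart. It then uses \cref{prop:no-exotic-retracts}, including the Krull--Schmidt uniqueness of multiplicities, to see that this split group is free on the $[T^q]$. It repeats the argument on the $\D$ side, and finally obtains \eqref{eq:K0-Euler} by pushing $[M]$ into $K_0(\D)$.

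\textbf{Your route.} You split the argument into generation and detection.
\begin{enumerate}[label=\textup{(\roman*)}]
\item Finite weight Postnikov towers, together with $\Hh=\Add\{T^q\}$, show that the $[T^q]$ generate $K_0(\C)$.
\item The Euler characteristic $\chi_E$ of $E$-homology shows that the $[T^q]$ are linearly independent, because $\chi_E\circ K_0(F)$ sends them to the basis $u^q$.
\end{enumerate}

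\textbf{What each route buys.} Yours is more elementary. It avoids the $K_0$-comparison theorem for weight structures and the uniqueness half of Krull--Schmidt in the heart. It also makes \eqref{eq:K0-Euler} a tautology of the identification, since you define the identification through $\chi_E\circ K_0(F)$. The paper's route does not depend on having an external additive invariant that happens to separate the $[T^q]$. It yields the structural isomorphism $K_0(\C)\cong K_0^{\mathrm{split}}(\Hh)$, with its canonical inverse, purely from the heart. Your argument needs $\chi_E$ as extra input that is special to this situation.

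\textbf{On the step you flag as the main obstacle.} Additivity of $\chi_E$ is not actually an obstacle. For any distinguished triangle in $\D$, fix $q$. The long exact sequence of finite-dimensional spaces $\pi_{p,q}$ is bounded, so its alternating sum of dimensions vanishes, whether or not the triangle splits. The load-bearing input in your argument is instead the absence of exotic retracts in $\Hh$ (\cref{prop:no-exotic-retracts}). Without it, a retract of $\bigoplus_i T^{q_i}$ need not have a class in the span of the $[T^q]$, and your generation step (i) would not go through.
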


\begin{proof}
For a bounded weight structure, the weight-complex functor induces an
isomorphism between the Grothendieck group and the split Grothendieck group
of the heart by \cite[Proposition~4.2]{Sosnilo}; equivalently, it sends an
object to the alternating sum of the terms of a weight complex.  Since the
heart is closed under tensor products, the inverse map induced by its
inclusion is multiplicative, so this is an isomorphism of rings.  By Proposition 
\ref{prop:no-exotic-retracts},
\[
  K_0(\C)\cong K_0^{\mathrm{split}}(\Hh)
  \cong\bigoplus_{q\in\mathbb Z}\mathbb Z[T^q]
  \cong\mathbb Z[u,u^{-1}].
\]
The same argument in $\D$ uses its split diagonal heart and gives the second
copy of $\mathbb Z[u,u^{-1}]$.  The heart functor sends $T^q$ to $U^q$, so
$F$ induces the identity under these identifications.  Since
$\Sigma^{p,q}\X=T^q[p-q]$, the suspension relation in $K_0$ gives the stated
formula for spheres.

Finally, Theorem \ref{thm:tanania-input} and Lemma \ref{lem:compact-E-modules} gives a finite
decomposition
\[
  F(M)\simeq
  \bigoplus_{p,q\in\mathbb Z}
  \bigl(\Sigma^{p,q}E\bigr)^{\oplus\dim_{\F}E_{p,q}(M)}.
\]
Its class in $K_0(\D)$ is the right-hand side of
\eqref{eq:K0-Euler}.  Since $K_0(F)$ is the isomorphism just computed, the
same formula holds in $K_0(\C)$.
\end{proof}

\section{Weight profiles and composition nilpotence detected by isotropic
\texorpdfstring{$\MBP$}{MBP}-homology}
\label{sec:nilpotence}

Object detection alone does not imply nilpotence of maps.  We now use the
stable $\infty$-categorical enhancement $\Cinf$ and distinguish the strong
weight complex from Bondarko's weak weight complex.

\begin{proof}[Proof of \cref{thm:weight-profile}]
By Theorem \ref{thm:tanania-input} and Lemma \ref{lem:compact-E-modules}, there is a finite
decomposition
\[
  F(M)\cong
  \bigoplus_{n,q}(U^q[n])^{\oplus\beta_{n,q}(M)}.
\]
For the diagonal weight structure on $\D$, this object has weights in
$[a,b]$ exactly when all multiplicities with $n\notin[a,b]$ vanish.  The
heart functor $\Phi$ is full and conservative by Corollary
\ref{cor:heart-functor}; hence Proposition  \ref{prop:weight-detection}, applied to
$F$, says that $F$ detects both weight bounds.  This proves the asserted
equivalence and the minimality of $[a(M),b(M)]$.

Let $C^\bullet$ be a $\Phi$-minimal representative of $t(M)$.  Naturality
of the strong weight complex gives
\[
  K^b(\Phi)(C^\bullet)
  \cong t_E(FM)
  \cong \bigoplus_{n,q}\F(q)^{\oplus\beta_{n,q}(M)}[n].
\]
Both sides have zero differential.  An isomorphism between zero-differential
complexes in $K^b(\V)$ is termwise an isomorphism.  Multiplicity reflection
in Corollary  \ref{cor:heart-functor} therefore yields
\[
  C^{-n}\cong
  \bigoplus_q(T^q)^{\oplus\beta_{n,q}(M)}
\]
for every $n$.  Every other $\Phi$-minimal representative is chain-isomorphic
to $C^\bullet$ by Proposition  \ref{prop:minimal-uniqueness}, so the formula holds for
every such representative.
\end{proof}

\subsection{Weight Postnikov towers and weak homotopy}

Let $M\in\C_{w\in[a,b]}$.  A \emph{weight Postnikov tower} for $M$ in the
range $[a,b]$ consists of objects and maps
\[
  0=M_{\le a-1}\longrightarrow M_{\le a}
  \longrightarrow\cdots\longrightarrow M_{\le b}=M
\]
together with distinguished triangles
\[
  M_{\le i-1}\longrightarrow M_{\le i}
  \longrightarrow H_i[i]\longrightarrow M_{\le i-1}[1],
  \qquad H_i\in\Hh.
\]
Such a tower is obtained by iterating weight decompositions.  The objects
$H_i[i]$ are its \emph{weight factors}.  The connecting maps between
successive triangles define a bounded complex of objects of $\Hh$, well
defined up to Bondarko's weak homotopy relation.  We use cohomological
indexing and place $H_i$ in degree $-i$.  Consequently, if the tower has
weights in $[a,b]$, its weight complex is supported in degrees $[-b,-a]$.
This sign convention is compatible with $t(H[n])=H[n]$ for $H\in\Hh$.

Recall that, two chain maps $c,c'\colon A^\bullet\to B^\bullet$ are
\emph{weakly homotopic} if there exist degree $-1$ graded maps
$h,j\colon A^\bullet\to B^{\bullet-1}$ such that
\[
  c-c'=d_Bh+jd_A.
\]
Ordinary chain homotopy is the special case $h=j$ (with the usual sign
convention incorporated in the differentials).  Quotienting morphisms by
weak homotopy gives weak homotopy category.

The functoriality and strictification properties of these towers that we need
are collected, with precise references, in Lemma  \ref{lem:strictification} below.
With the cohomological convention just fixed---$H_i$ lies in degree $-i$---
Sosnilo's comparison identifies the weak-homotopy class of the complex of
weight factors with the projection of the strong weight complex
\eqref{eq:strong-weight-complex}.

If $M\in\C_{w\in[a,b]}$, the strong weight complex $t(M)$ can therefore be
represented by a bounded complex supported in degrees $[-b,-a]$.  By Lemma 
\ref{lem:cancellation}, this representative may be chosen
$\Phi$-minimal without enlarging its support.

\subsection{A finite-filtration ghost lemma}

Fix once and for all a weight Postnikov tower for
$M\in\C_{w\in[a,b]}$, and put $L:=b-a+1.$

\begin{definition}\label{def:strict-tower-ghost}
An endomorphism $g\colon M\to M$ is a \emph{strict tower ghost} for the fixed
tower if it is the underlying map of a morphism from the tower to itself
whose induced maps on all weight factors $H_i[i]$ are zero.
\end{definition}

The adjective ``strict'' is essential: vanishing only in the weak homotopy
category does not by itself give the following filtration argument.

\begin{lemma}
\label{lem:strict-ghost}
More generally, if $g_1,\ldots,g_r$, where $1\le r\le L$, are strict tower
ghosts, then $g_r\cdots g_1$ factors through the structural map
$M_{\le b-r}\to M$, where $M_{\le j}=0$ for $j<a$.  In particular, a
composite of $L$ strict tower ghosts is zero.
\end{lemma}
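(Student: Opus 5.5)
We argue by induction on $r$, proving the sharper statement that $g_r\cdots g_1$ factors through $M_{\le b-r}\to M$ \emph{compatibly with the tower structure}. Write $\iota_j\colon M_{\le j}\to M$ for the structural maps. Each strict tower ghost $g$ comes with maps $g_{\le j}\colon M_{\le j}\to M_{\le j}$ making all squares with the tower maps commute. Moreover, the induced maps on each triangle $M_{\le j-1}\to M_{\le j}\to H_j[j]$ commute with the connecting data, and the map on $H_j[j]$ is zero.

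The basic step is the following observation for a single ghost $g$ and any $j$. The composite
\[
  M_{\le j}\xrightarrow{g_{\le j}}M_{\le j}\longrightarrow H_j[j]
\]
equals the composite $M_{\le j}\to H_j[j]\xrightarrow{0}H_j[j]$, so it vanishes. By the long exact sequence of the triangle $M_{\le j-1}\to M_{\le j}\to H_j[j]$, we get a factorization $g_{\le j}=\lambda_j\circ h$ for some $h\colon M_{\le j}\to M_{\le j-1}$, where $\lambda_j\colon M_{\le j-1}\to M_{\le j}$ is the tower map. Taking $j=b$ shows that $g=g_{\le b}$ factors through $\iota_{b-1}$, which is the case $r=1$.

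For the inductive step, suppose $g_{r-1}\cdots g_1=\iota_{b-r+1}\circ\phi$ with $\phi\colon M\to M_{\le b-r+1}$. Using the commutative square $g_r\circ\iota_{b-r+1}=\iota_{b-r+1}\circ(g_r)_{\le b-r+1}$, we obtain
\[
  g_r\cdots g_1=\iota_{b-r+1}\circ(g_r)_{\le b-r+1}\circ\phi.
\]
The basic step at level $j=b-r+1$ factors $(g_r)_{\le b-r+1}$ through $\lambda_{b-r+1}$. Since $\iota_{b-r+1}\circ\lambda_{b-r+1}=\iota_{b-r}$, the composite factors through $\iota_{b-r}$. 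For $r=L$ this gives a factorization through $M_{\le a-1}=0$, so the composite of $L$ strict tower ghosts is zero. If some level $b-r<a$ occurs earlier, the same vanishing follows immediately.

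The only delicate point is that we need the tower-level components $(g_r)_{\le j}$ to commute with the structural maps $\iota_j$ at \emph{every} level, not just the top one. This is precisely what is encoded in $g_r$ being a morphism of towers, i.e.\ the strictness assumption. No choices of fillers are needed beyond the single lift $h$ at each step. Mere weak-homotopy vanishing would not provide these compatible components, which is why the hypothesis is essential.
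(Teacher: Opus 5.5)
Your proof is correct and is essentially the paper's argument. Strictness gives $c_j\circ g_{\le j}=0$, the triangle then factors $g_{\le j}$ through $M_{\le j-1}\to M_{\le j}$, and the commuting square $g_r\circ\iota_{b-r+1}=\iota_{b-r+1}\circ(g_r)_{\le b-r+1}$ pushes the factorization down one level at each step. The paper runs this induction at every level $i$ of the tower and then specializes to $i=b$, while you work at the top level from the start, which suffices. Your announced ``sharper'' tower-compatible statement is never proved, but it is also never used.
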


\begin{proof}
Choose tower representatives
$g_{\nu,\le i}\colon M_{\le i}\to M_{\le i}$.  Write the $i$th triangle as
\[
  M_{\le i-1}\xrightarrow{j_{i-1}}M_{\le i}
  \xrightarrow{c_i}H_i[i]\longrightarrow M_{\le i-1}[1].
\]
Since $g_\nu$ is strict, its map on $H_i[i]$ is zero, and hence
$c_i g_{\nu,\le i}=0$.  Exactness gives a factorization
\[
  g_{\nu,\le i}=j_{i-1}r_{\nu,i}
\]
for some $r_{\nu,i}\colon M_{\le i}\to M_{\le i-1}$.

Let $J_{s,i}\colon M_{\le s}\to M_{\le i}$ denote the structural map.  For
each $i\in[a,b]$ and $1\le s\le\min\{r,i-a+1\}$, we prove by induction on
$s$ that
$g_{s,\le i}\cdots g_{1,\le i}$ factors through $J_{i-s,i}$.  The case
$s=1$ is the preceding factorization.  If the assertion holds for $s$,
compatibility of $g_{s+1}$ with the tower gives
\[
  g_{s+1,\le i}J_{i-s,i}
  =J_{i-s,i}g_{s+1,\le i-s},
\]
and the last map factors through $M_{\le i-s-1}$.  This proves the induction
step.  Taking $i=b$ and $s=r$ proves the asserted factorization through
$M_{\le b-r}$.  If $r=L$, this object is $M_{\le a-1}=0$.
\end{proof}

\subsection{The kernel of the strong weight complex}

Set
\[
  K_M:=\ker\!\left(
  \End_{\C}(M)\xrightarrow{\ t\ }
  \End_{K^b(\Hh)}(t(M))
  \right).
\]

\begin{lemma}
\label{lem:strictification}
Fix a weight Postnikov tower for $M\in\C_{w\in[a,b]}$.
\begin{enumerate}[label=\textup{(\roman*)}]
\item Every $h\in K_M$ extends to an endomorphism of the fixed tower whose
induced chain map on the weight factors is weakly null-homotopic.
\item If $h,h'\in K_M$, then the composite $h'h$ can be represented by an
endomorphism of the fixed tower that is a strict tower ghost.
\item Given $h\in K_M$ and a weight factor of the fixed tower, $h$ can be
represented by an endomorphism of that tower whose component on the chosen
factor is zero.
\end{enumerate}
\end{lemma}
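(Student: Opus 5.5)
Throughout, I work with the fixed tower and with its image under $t$, which is the complex of weight factors $H_i$ (in degree $-i$) with differentials the connecting maps $\delta_i\colon H_i[i]\to H_{i-1}[i]$, well defined up to weak homotopy.

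For (i), I extend $h$ to the tower by induction on $i$. Use the triangles $M_{\le i-1}\to M_{\le i}\to H_i[i]$ and the orthogonality $\C_{w\le i}\perp\C_{w\ge i+1}$. Since $M_{\le i}\in\C_{w\le i}$, the composite of $M_{\le i}\to M$, then $h$, then the cone of $M_{\le i}\to M$ (which lies in $\C_{w\ge i+1}$) vanishes. Hence $h$ restricts to some $h_{\le i}\colon M_{\le i}\to M_{\le i}$, and the triangles give maps on the weight factors; this is Bondarko's standard weight-functoriality of Postnikov towers. The induced chain map on weight factors then represents the weak-homotopy class of the projection of $t(h)$, by Sosnilo's comparison quoted above. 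Since $t(h)=0$ in $K^b(\Hh)$, it is in particular zero in the weak homotopy category. I also record the choices: changing the extension $h_{\le i}$ changes the factor maps by maps of the form $d_Bh'+j'd_A$. This lets me reach representatives with prescribed weak homotopies, which is what (iii) uses.

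For (iii), I fix the chosen factor index $i$ and write the weakly null chain map as $c=dh+jd$, with components $c^{-i}=\delta\circ h^{-i}+j^{-i+1}\circ\delta$. The idea is to modify the tower extension at level $i$ by the correction $M_{\le i}\to H_i[i]\to M_{\le i-1}[1]\to\cdots$. Concretely, I subtract the composite of $c_i\colon M_{\le i}\to H_i[i]$ with $h$ (viewed as a map $H_i[i]\to H_{i+1}[i]$, shifted appropriately to $M_{\le i}$ via the connecting maps), and similarly use $j$ through $j_{i-1}$. These corrections are indeterminacy in the choice of $h_{\le i}$, since they vanish after composing with the structure maps on the relevant side. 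After making them, the component on $H_i[i]$ becomes $c^{-i}-(dh+jd)^{-i}=0$. \textbf{This bookkeeping is the main obstacle:} one must check that the allowed ambiguity in lifting exactly realizes both terms $dh$ and $jd$ at a single index, without insisting on compatibility at all indices at once. That restriction to a single index is why only one factor is killed.

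For (ii), I apply (iii) in an interlaced way. Choose the representative of $h$ killing the factors at alternate indices, and the representative of $h'$ killing the complementary ones. This is not literally possible from (iii) alone, so instead I argue via the weak-homotopy form. The composite $c'c$ of two weakly null chain maps is $(dh'+j'd)(dh+jd)=dh'dh+dh'jd+j'ddh+j'djd$. The middle term with $dd=0$ drops. Using the tower freedom from (i), with weak homotopies absorbed level by level, the composite can be re-represented so that its factor maps vanish strictly. The principle is that a product of two weak null-homotopies is a genuine null-homotopy, and a genuinely null-homotopic factor map can be strictified to zero by the same correction procedure as in (iii), now simultaneously at all levels.
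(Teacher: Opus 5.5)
There is a gap, at exactly the step you flag as the main obstacle. Your overall route matches the paper's. Part (i) is the same extension-plus-Sosnilo argument. Part (ii) rests on the same two facts the paper takes from Bondarko: a composite of two weakly null-homotopic chain maps is null-homotopic (Proposition~B.2(4)), and a null-homotopic change of the factor chain map can be realized on the fixed tower without changing the underlying morphism (Proposition~1.3.4(13)).

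The realization step, which also carries (iii), is left open, and your diagnosis of it is wrong. An endomorphism of the tower must be compatible at every level. A correction made ``without insisting on compatibility at all indices at once'' is therefore not a tower endomorphism and proves nothing. It also contradicts your use of the same procedure ``simultaneously at all levels'' in (ii). Your remark in (i) that re-choosing the extension reaches ``prescribed weak homotopies'' (which you say (iii) uses) is unjustified. If every weakly null map were realizable, (i) alone would make each element of $K_M$ a strict ghost, and the pairing in (ii) would be pointless.

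What you need is the following construction. Write the $i$th triangle as $M_{\le i-1}\xrightarrow{j_{i-1}}M_{\le i}\xrightarrow{c_i}H_i[i]\xrightarrow{\partial_i}M_{\le i-1}[1]$. Up to the sign convention, the differentials are then $\delta_{i+1}[i]=c_i\,\partial_{i+1}[-1]$ and $\delta_i[i]=c_{i-1}[1]\,\partial_i$. Take an arbitrary family $s_i\colon H_i\to H_{i+1}$ (so $s_b=0$) and put $\epsilon_i:=\partial_{i+1}[-1]\circ s_i[i]\circ c_i$.
\begin{itemize}
\item Since $c_ij_{i-1}=0$ and $j_{i-1}\partial_i[-1]=0$, one gets $\epsilon_ij_{i-1}=0=j_{i-1}\epsilon_{i-1}$ for every $i$, and $\epsilon_b=0$.
\item A direct check shows that $\psi_i:=(\delta_{i+1}s_i+s_{i-1}\delta_i)[i]$ satisfies $\psi_ic_i=c_i\epsilon_i$ and $\partial_i\psi_i=\epsilon_{i-1}[1]\partial_i$.
\end{itemize}
So $(\epsilon,\psi)$ is an endomorphism of the whole tower over $0\colon M\to M$, with factor chain map $ds+sd$. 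Adding it to any extension of $h$ changes the factor chain map by an arbitrary ordinary null-homotopy and leaves the underlying map equal to $h$.

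With this construction, (ii) and (iii) are short.
\begin{itemize}
\item For (iii), write the weakly null chain map as $c=d\sigma+\tau d$. Take $s^{-i}=\sigma^{-i}$, $s^{-i+1}=\tau^{-i+1}$, and $s=0$ elsewhere; then $(ds+sd)^{-i}=c^{-i}$. The $\tau d$-term comes from the correction on $M_{\le i-1}$, not on $M_{\le i}$. Only one factor is killed because a single $s$ can agree with two different homotopies in only one degree. Compatibility is never relaxed.
\item For (ii), dropping $\tau' d d\sigma$ does not yet give a single homotopy. You also need $d\sigma d=d\tau d$ and $d\sigma' d=d\tau' d$, which is just $dc=cd$ and $dc'=c'd$. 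Then $c'c=ds+sd$ with $s=\tau' d\sigma+\sigma'\tau d$, and the construction above turns the composite tower endomorphism into a strict tower ghost over $h'h$.
\end{itemize}
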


\begin{proof}
By \cite[Corollary~3.5]{Sosnilo}, the composite of the strong weight-complex
functor with the projection
\[
  K^b(\Hh)\longrightarrow K^b_{\mathfrak w}(\Hh)
\]
where the target is weak homotopy category, is naturally
isomorphic to Bondarko's weak weight-complex functor.  By
\cite[Proposition~1.3.4(2)]{BondarkoWeightComplexes}, every $h$ extends to
the fixed tower.  Hence, if $h\in K_M$, the chain map induced by any such
extension is weakly homotopic to zero.  This proves (i).

For (ii), choose tower extensions of $h$ and $h'$.  Their chain maps are
weakly null-homotopic by (i), so their composite is zero in the ordinary
homotopy category by
\cite[Proposition~B.2(4)]{BondarkoWeightComplexes}.  Proposition~1.3.4(13)
of the same source realizes the zero chain map by an endomorphism of the same
tower without changing the underlying map $h'h$.  This is a strict tower
ghost.

For (iii), \cite[Proposition~B.2(1),(2)]{BondarkoWeightComplexes} replaces
the weakly null chain map by an ordinarily homotopic one whose component in
the prescribed degree is zero.  Again Proposition~1.3.4(13) realizes this
chain map on the fixed tower without changing the underlying endomorphism.
\end{proof}

\begin{lemma}
\label{lem:strong-kernel}
If $M\in\C_{w\in[a,b]}$ and $L=b-a+1$, then
\[
  K_M^{2L-1}=0.
\]
\end{lemma}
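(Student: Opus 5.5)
We must show that any product of $2L-1$ elements of $K_M$ vanishes. The plan is to combine \cref{lem:strictification}(ii) with the strict ghost lemma \cref{lem:strict-ghost}. The point is that a single element of $K_M$ need not be a strict tower ghost, but a composite of two elements is one. So a product of $2L$ elements already vanishes by grouping into $L$ pairs. To save one factor, we will use \cref{lem:strictification}(iii) at an end of the tower.

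Fix a weight Postnikov tower for $M$ in the range $[a,b]$, and let $h_1,\dots,h_{2L-1}\in K_M$. The steps are as follows.

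\begin{enumerate}[label=\textup{(\arabic*)}]
\item Group the first $2L-2$ factors into the $L-1$ consecutive pairs $g_\nu:=h_{2\nu}h_{2\nu-1}$ for $\nu=1,\dots,L-1$. By \cref{lem:strictification}(ii), each $g_\nu$ is represented by an endomorphism of the fixed tower that is a strict tower ghost.
\item By \cref{lem:strict-ghost} with $r=L-1$, the composite $g_{L-1}\cdots g_1$ factors as $J_{a,b}\circ\varphi$ for some $\varphi\colon M\to M_{\le a}=H_a[a]$. Here $M_{\le a}$ is the bottom piece of the tower, which is a single weight factor.
\item It remains to show $h_{2L-1}J_{a,b}=0$. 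For this, use \cref{lem:strictification}(iii) to represent $h_{2L-1}$ by a tower endomorphism whose component on the factor $H_a[a]$ is zero. Since $M_{\le a}=H_a[a]$, compatibility with the tower gives
\[
  h_{2L-1}J_{a,b}=J_{a,b}\,h_{2L-1,\le a}=0.
\]
\end{enumerate}

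Putting these together, $h_{2L-1}\cdots h_1=0$, so $K_M^{2L-1}=0$.

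If instead one prefers to kill the top end, apply (iii) to the first factor $h_1$ at the factor $H_b[b]$. Then $h_1$ factors through $M_{\le b-1}$, and the remaining $L-1$ pairs, regarded as tower maps restricted to the truncated tower, descend the filtration the rest of the way. Either version uses exactly $2(L-1)+1$ factors.

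The main obstacle is step (3). One must check that "component on the chosen factor is zero" in \cref{lem:strictification}(iii) really means the tower representative restricted to $M_{\le a}$ is zero, which holds because the bottom stage coincides with its weight factor. One must also check that the factorization through $J_{a,b}$ produced in \cref{lem:strict-ghost} is compatible with an independently chosen tower representative of $h_{2L-1}$. The second point is harmless: the identity $h_{2L-1}J_{a,b}=J_{a,b}h_{2L-1,\le a}$ holds for any tower endomorphism extending $h_{2L-1}$, and we only compose underlying maps in $\C$.

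In the degenerate case $L=1$, $M$ is pure, and a single element of $K_M$ is zero directly by (iii), since $M=M_{\le a}$ is its own unique weight factor.
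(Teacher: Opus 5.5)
Your proposal is correct and is essentially the paper's own proof. You pair the first $2L-2$ factors into $L-1$ strict tower ghosts via \cref{lem:strictification}(ii), use \cref{lem:strict-ghost} to factor their composite through $M_{\le a}\cong H_a[a]$, and then kill that bottom stage by applying \cref{lem:strictification}(iii) to $h_{2L-1}$, with the same separate treatment of $L=1$.
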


\begin{proof}
Choose
\[
  h_1,\ldots,h_{2L-1}\in K_M.
\]
If $L=1$, apply Lemma \ref{lem:strictification}(iii) to the unique weight factor.
The resulting tower has only that factor, so its underlying map $h_1$ is
zero.

Assume $L\ge2$.  For $j=1,\ldots,L-1$, put $g_j:=h_{2j}h_{2j-1}.$
By Lemma \ref{lem:strictification}(ii), every $g_j$ can be represented as a strict
tower ghost for the fixed tower.

Put $G=g_{L-1}\cdots g_1$.  By the  Lemma 
\ref{lem:strict-ghost}, the composite $G$
factors through the structural map $i\colon M_{\le a}\to M$; write
$G=i\circ u$.  Since $M_{\le a-1}=0$, the object $M_{\le a}$ identifies
with the first weight factor $H_a[a]$: the map
$M_{\le a}\to H_a[a]$ in the initial tower triangle is an isomorphism.  Apply Lemma 
\ref{lem:strictification}(iii) to $h_{2L-1}$ and this factor.  The resulting
tower representative has zero component on $H_a[a]$ and therefore zero
restriction to $M_{\le a}$, so
$h_{2L-1}i=0$.  Therefore
\[
  h_{2L-1}\cdots h_1=h_{2L-1}G=h_{2L-1}iu=0.
\]
Thus $K_M^{2L-1}=0$.
\end{proof}

\begin{proof}[Proof of \cref{thm:nilpotence}]
The case $M=0$ is immediate, so assume $M\ne0$.  Choose a weight range
$[a,b]=[a(M),b(M)]$, which is valid by \cref{thm:weight-profile}, and put
$L=L(M)$.  Represent the strong weight complex
$t(M)$ by a $\Phi$-minimal bounded complex $C^\bullet$ supported in
$[-b,-a]$.  By Proposition \ref{prop:no-exotic-retracts}, every term is a finite direct
sum of diagonal spheres.  By \cref{thm:weight-profile},
$d(C^\bullet)=d(M)$.

Let $f\in I_M$.  By definition, $F(f)=0$ in $\D$.  Naturality
\eqref{eq:weight-complex-base-change} gives
\[
  K^b(\Phi)(t(f))=t_E(F(f))=0.
\]
After transporting $t(f)$ to the chosen representative $C^\bullet$, its
class lies in the ideal $R_{C^\bullet}$ of Lemma 
\ref{lem:minimal-complex-kernel}.  That lemma shows that the strong weight
complex kills every product of $d(M)$ elements of $I_M$.  In other words,
$I_M^{d(M)}\subseteq K_M.$
By Lemma \ref{lem:strong-kernel}, $K_M^{2L-1}=0$.  Hence
\[
  I_M^{\,d(M)(2L(M)-1)}
  =(I_M^{d(M)})^{2L(M)-1}=0.
\]
This proves nilpotence of the entire two-sided ideal $I_M$.

Finally, by full faithfulness of $\pi_{**}$ on cellular $E$-modules
(\cref{thm:tanania-input}), the condition $E_{**}(f)=0$ is equivalent to
$F(f)=0$ as a morphism in $\D$.  Thus every degree-zero self-map with zero
$E$-homology belongs to $I_M$ and is nilpotent.
\end{proof}

\begin{proof}[Proof of Corollary \ref{cor:nilpotence-detection}]
Functoriality shows that a nilpotent endomorphism has nilpotent
$E$-homology.  Conversely, if $E_{**}(f)^m=0$, then $f^m\in I_M$.
The bound in \cref{thm:nilpotence} gives
\[
  f^{mN(M)}=(f^m)^{N(M)}=0.
\]
For a two-sided ideal $\mathcal J$ as in (ii), one has
$\mathcal J^m\subseteq I_M$ and hence
$\mathcal J^{mN(M)}=(\mathcal J^m)^{N(M)}=0$.

For (iii), polynomial evaluation is defined by the canonical $\F$-linearity
of Corollary \ref{cor:F2-linear}.  Full faithfulness of $\pi_{**}$ on cellular $E$-modules
again shows that $P(E_{**}(f))=0$ is equivalent to
$F(P(f))=0$.  Thus $P(f)\in I_M$, and \cref{thm:nilpotence} gives
$P(f)^{N(M)}=0$.  Apply this to $P=\mu_{\bar f}$.  The minimal polynomial of
an endomorphism of the finite-dimensional vector space $E_{**}(M)$ has degree
at most its total dimension, which gives the stated bound.
\end{proof}

\begin{proof}[Proof of Corollary  \ref{thm:fitting}]
The assertion is immediate for $M=0$, so assume $M\ne0$.  By Corollary
\ref{cor:F2-linear}, polynomial expressions in $f$ with coefficients in
$\F$ are defined.  Write $\bar f=E_{**}(f)$ and factor its minimal polynomial
as
\[
  \mu_{\bar f}(x)=x^r g(x),
  \quad g(0)\ne0.
\]
Put $N=N(M)$.  By Corollary \ref{cor:nilpotence-detection}(iii),
\[
  0=\mu_{\bar f}(f)^N=f^{rN}g(f)^N.
\]
The polynomials $x^{rN}$ and $g(x)^N$ are coprime.  Choose
$\alpha,\beta\in\F[x]$ with
\[
  \alpha(x)x^{rN}+\beta(x)g(x)^N=1
\]
and set $e=\beta(f)g(f)^N$.  Then $e-1=-\alpha(f)f^{rN}$, so both
$e^2-e$ and $f^{rN}e$ are multiples of $f^{rN}g(f)^N$.  Hence
\[
  e^2=e,
  \quad
  f^{rN}e=0,
  \quad
  g(f)^N(1-e)=0.
\]
If $r=0$, these identities give $e=0$ and
$M_{\mathrm{nil}}=0$; if $g=1$, they give $e=1$ and
$M_{\mathrm{inv}}=0$.  
Since $\C$ is idempotent-complete, $e$ gives an $f$-stable decomposition
\[
  M\cong M_{\mathrm{nil}}\oplus M_{\mathrm{inv}},
  \quad
  M_{\mathrm{nil}}:=\operatorname{im}(e),
  \quad
  M_{\mathrm{inv}}:=\operatorname{im}(1-e).
\]
The first displayed identity shows that $f$ is nilpotent on
$M_{\mathrm{nil}}$.  Since $x$ and $g(x)^N$ are coprime, there exist
$s,t\in\F[x]$ with $s(x)x+t(x)g(x)^N=1$.  On
$M_{\mathrm{inv}}$ the last term vanishes, so $s(f)$ is an inverse to $f$.
Applying $E_{**}$ to $e$ gives the usual polynomial projector onto the
generalized zero-eigenspace of $\bar f$, hence the ordinary Fitting
decomposition.

It remains to prove canonicity and functoriality.  Let
$u\colon(M,f)\to(M',f')$ satisfy $uf=f'u$, and choose Fitting decompositions
on source and target.  Any component from the nilpotent summand of $M$ to the
invertible summand of $M'$ is zero: if $f^a=0$ on the source, then
$(f')^a u=uf^a=0$, and $(f')^a$ is invertible on the target.  Similarly, a
component from the invertible summand of $M$ to the nilpotent summand of $M'$
is zero.  If $e$ and $e'$ are the respective nilpotent projectors, the two
vanishing statements are
\[
  (1-e')ue=0,
  \quad
  e'u(1-e)=0.
\]
They imply $ue=e'ue=e'u$, which is the asserted functoriality.  For two
Fitting projectors $e,e'$ of the same pair $(M,f)$, take $u=1_M$ to obtain
$e=e'e=e'$.  This proves uniqueness.
\end{proof}

\begin{corollary}\label{cor:krull-schmidt}
The category $\C$ is Krull--Schmidt.  
For an indecomposable object $M$, every endomorphism is either invertible or
nilpotent, and $\End_{\C}(M)$ is a local ring.
\end{corollary}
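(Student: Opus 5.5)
The plan is to derive Krull--Schmidt from the Fitting decomposition (\cref{thm:fitting}) together with finiteness of $\dim_{\F}E_{**}(M)$. First I would treat an indecomposable $M$. Since $M\ne0$ is indecomposable, for any $f\in\End_{\C}(M)$ the Fitting idempotent $e_f$ is $0$ or $1$, because $\C$ is idempotent-complete. If $e_f=1$, then $f$ is nilpotent; if $e_f=0$, then $f$ is invertible. So every endomorphism is either invertible or nilpotent.

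To see that $\End_{\C}(M)$ is local, I would show that the non-invertible elements form an ideal. The set of nilpotent elements is closed under left and right multiplication by arbitrary elements: if $f$ is nilpotent and $gf$ were invertible, then $f$ would have a left inverse. A nilpotent element with a left inverse would force $1=0$, which fails because $M\ne0$. Hence $gf$ is nilpotent, and the same argument works for $fg$. For sums, suppose $f$ and $f'$ are nilpotent but $f+f'$ is invertible, say with inverse $v$. Then $vf+vf'=1$, and $vf$ and $vf'$ are both non-invertible, hence nilpotent. But then $vf=1-vf'$ is invertible, a contradiction. So the nilpotents form the unique maximal two-sided ideal, and the ring is local.

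For the Krull--Schmidt property itself, I would show that every object is a finite direct sum of indecomposables by induction on $\dim_{\F}E_{**}(M)$. If $M$ decomposes nontrivially as $M_1\oplus M_2$ with both summands nonzero, then each $E_{**}(M_i)$ is nonzero by \cref{thm:pure-nakayama}(i). Since $E_{**}$ is additive, each summand has strictly smaller total dimension, and the induction applies. This gives existence of a decomposition into indecomposables with local endomorphism rings, in an idempotent-complete additive category. Uniqueness then follows from the standard Krull--Schmidt--Azumaya theorem, or one can take the standard definition of Krull--Schmidt as "finite decompositions into objects with local endomorphism rings".

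The only step needing care is the indecomposability dichotomy, which depends on \cref{thm:fitting} applying to every $f$. That theorem is already proven, so the main obstacle is minor: it reduces to checking that $M_{\mathrm{nil}}$ and $M_{\mathrm{inv}}$ are honest summands, which holds because $\C$ is idempotent-complete.
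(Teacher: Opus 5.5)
Your proposal is correct and follows essentially the same route as the paper. Like the paper, you use the Fitting idempotent to get the invertible-or-nilpotent dichotomy for indecomposables, induct on $\dim_{\F}E_{**}(M)$ for existence, and appeal to the standard Krull--Schmidt argument for uniqueness. The only difference is the local-ring step: you check directly that the nilpotent elements form a two-sided ideal, whereas the paper uses the criterion that $1-f$ is invertible whenever $f$ is not.
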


\begin{proof}
For a nonzero object put $D(M)=\dim_{\F}E_{**}(M)$.  This is a positive
finite integer by Theorem  \ref{thm:pure-nakayama} and Lemma \ref{lem:compact-E-modules}.  In a
nontrivial decomposition $M\cong M_1\oplus M_2$, both $D(M_i)$ are positive
and smaller than $D(M)$.  Repeated decomposition therefore terminates and
gives a finite sum of indecomposable objects.

If $M$ is indecomposable, the two summands in \cref{thm:fitting} cannot both
be nonzero.  Hence every endomorphism is either invertible or nilpotent.  If
$f$ is not invertible, then $f$ is nilpotent and $1-f$ is invertible.  The standard local-ring criterion now shows that
$\End_{\C}(M)$ is local.  The usual Krull--Schmidt argument for finite sums
of objects with local endomorphism rings proves uniqueness.
\end{proof}

\begin{remark}\label{rem:nilpotence-factor-two}

1.) As another immediate consequence, an idempotent endomorphism in $\C$ is zero
if and only if its action on isotropic $\MBP$-homology is zero: such an
idempotent lies in the nilpotent ideal $I_M$, and a nilpotent idempotent
vanishes.

2.) The exponent \(2L-1\) reflects the passage from the strong weight complex
to a strict endomorphism of a fixed weight tower. An element of \(K_M\)
induces only a weakly null-homotopic map on such a tower. By
\cite[Proposition~B.2(4)]{BondarkoWeightComplexes}, the composite of two
such maps is ordinarily null-homotopic, and
\cite[Proposition~1.3.4(13)]{BondarkoWeightComplexes} realizes the zero
chain map strictly. Thus the first \(2L-2\) factors can be paired into
\(L-1\) strict tower ghosts; their composite factors through the first weight
factor, which the remaining map kills after the endpoint adjustment of
\cite[Proposition~B.2(1),(2)]{BondarkoWeightComplexes}. Hence the distinction
between strong, ordinary, and weak homotopy is essential.
\end{remark}

\end{document}